\documentclass[reqno, 10pt]{amsart}
\usepackage[centertags]{amsmath} \usepackage{amsfonts}
\usepackage{amssymb} \usepackage{amsthm}
\usepackage{newlfont}
\pagestyle{plain}

\parindent=1em
\baselineskip 15pt \hsize=12.3 cm \textwidth=12.3 cm


\renewcommand{\emptyset}{\varnothing}

\newtheorem{thm}{Theorem}[section]

\newtheorem{lemma}[thm]{Lemma}
\newtheorem{prop}[thm]{Proposition}

\newtheorem{example}[thm]{Example}

\numberwithin{equation}{section}

\renewcommand{\footnote}{\endnote}
\newcommand{\ignore}[1]{}\makeglossary

\begin{document}

\title{Rings as the unions of proper subrings}
\author{Andrea Lucchini and Attila Mar\'oti}
\footnotetext[1]{The research of the second author was supported
by a Marie Curie International Reintegration Grant within the 7th
European Community Framework Programme and partially by OTKA
NK72523 and OTKA T049841.} \footnotetext[2]{Key words: finite
ring, matrix ring, covering}
\date{19th of January, 2010}
\maketitle

\begin{abstract}
We describe all possible ways how a ring can be expressed as the
union of three of its proper subrings. This is an analogue for
rings of a 1926 theorem of Scorza about groups. We then determine
the minimal number of proper subrings of the simple matrix ring
$M_{n}(q)$ whose union is $M_{n}(q)$.
\end{abstract}

\section{Introduction}

No group is the union of two of its proper subgroups. It is a 1926
theorem of Scorza \cite{S} that a group $G$ is a union of three of
its pairwise distinct proper subgroups $A$, $B$, $C$ if and only
if $A$, $B$, $C$ have index $2$ in $G$ and $G/(A \cap B \cap C)$
is isomorphic to the Klein four group. This result was twice
reproved in \cite{HR} and \cite{BBM}.

No ring is the union of two of its proper subrings, however the
following example of I. Ruzsa \cite{BBBK} shows that a ring can be
the union of three proper subrings. The polynomial ring
$\mathbb{Z}[x]$ is the union of the proper subrings $S_{1}$,
$S_{2}$, $S_{3}$ where $S_{1}$ is the ring consisting of all
polynomials $f$ for which $f(0)$ is even, $S_{2}$ is the ring
consisting of all polynomials $f$ for which $f(1)$ is even, and
$S_{3}$ is the ring consisting of all polynomials $f$ for which
$f(0)+f(1)$ is even. It is easy to see that in this example the
ring $S_{1} \cap S_{2} \cap S_{3}$ is an ideal in $\mathbb{Z}[x]$
and the corresponding factor ring is isomorphic to
$\mathbb{Z}/2\mathbb{Z} \oplus \mathbb{Z}/2\mathbb{Z}$.

Hence it is natural to ask: is there an analogue of Scorza's
result for rings?

Clearly, it is sufficient to classify all ring $R$ and all proper
subrings $S_{1}$, $S_{2}$, $S_{3}$ of $R$ with the property that
$R = S_{1} \cup S_{2} \cup S_{3}$ and that no non-trivial ideal of
$R$ is contained in $S_{1} \cap S_{2} \cap S_{3}$. This leads to
the following definition.

We say that a $4$-tuple $(R,S_{1},S_{2},S_{3})$ of rings is good
if $S_{1}$, $S_{2}$, $S_{3}$ are proper subrings of the ring $R$
so that $R = S_{1} \cup S_{2} \cup S_{3}$ and that no non-trivial
ideal of $R$ is contained in $S_{1} \cap S_{2} \cap S_{3}$. For
any permutation $\pi$ of $\{ 1,2,3 \}$ we consider the good
$4$-tuples $(R,S_{1},S_{2},S_{3})$ and
$(R,S_{1\pi},S_{2\pi},S_{3\pi})$ to be the same. Similarly, if
$\varphi$ is an isomorphism between rings $R$ and $\bar{R}$ and
$(R,S_{1},S_{2},S_{3})$ is a good $4$-tuple, then the $4$-tuples
$(R,S_{1},S_{2},S_{3})$ and
$(\bar{R},\varphi(S_{1}),\varphi(S_{2}),\varphi(S_{3}))$ are also
considered to be the same.

The first result of the paper is

\begin{thm}
\label{t1} All good $4$-tuples of rings (see above) are completely
described by Examples 2.1 - 2.10.
\end{thm}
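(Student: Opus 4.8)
\emph{Step 1 (reduction to $\mathbb{F}_2$-algebras).} Let $(R,S_1,S_2,S_3)$ be good and put $I=S_1\cap S_2\cap S_3$, a subring of $R$. No $S_i$ is contained in another, since otherwise $(R,+)$ would be a union of two proper subgroups. Hence Scorza's theorem \cite{S}, applied to $(R,+)$, gives that each $S_i$ has additive index $2$, that $(R,+)/(I,+)\cong\mathbb{Z}/2\mathbb{Z}\oplus\mathbb{Z}/2\mathbb{Z}$, and that $S_1,S_2,S_3$ are exactly the three subgroups of additive index $2$ containing $I$. Consequently $2R\subseteq I$; but $2R$ is a two-sided ideal, so goodness forces $2R=0$. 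Thus $R$ is an associative $\mathbb{F}_2$-algebra (not necessarily with $1$), $I$ is a subalgebra of codimension $2$, the $S_i$ are the three intermediate codimension-$1$ subalgebras, and $I$ contains no nonzero ideal of $R$. Conversely, any such configuration is good: the three lines through $0$ in $R/I\cong\mathbb{F}_2\oplus\mathbb{F}_2$ cover $R/I$, so every coset of $I$ lies in some $S_i$ and $R=S_1\cup S_2\cup S_3$. So the theorem amounts to classifying pairs $(R,I)$, with $R$ an $\mathbb{F}_2$-algebra and $I$ a codimension-$2$ subalgebra containing no nonzero ideal, such that all three intermediate codimension-$1$ subspaces are subalgebras.

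\emph{Step 2 (the local conditions).} Choose $a,b$ with $R=I\oplus\mathbb{F}_2 a\oplus\mathbb{F}_2 b$, so that $\{S_1,S_2,S_3\}=\{\,I\oplus\mathbb{F}_2 a,\ I\oplus\mathbb{F}_2 b,\ I\oplus\mathbb{F}_2(a+b)\,\}$, and let $\pi\colon R\to R/I$ be the projection. Requiring each $S_i$ to be multiplicatively closed translates into: there are $\mathbb{F}_2$-linear functionals $\chi,\chi'\colon I\to\mathbb{F}_2$ with $ai\equiv\chi(i)a$, $bi\equiv\chi(i)b$, $ia\equiv\chi'(i)a$, $ib\equiv\chi'(i)b$ modulo $I$ for all $i\in I$, together with $a^2\in I\oplus\mathbb{F}_2 a$, $b^2\in I\oplus\mathbb{F}_2 b$, and a single compatibility relation coming from $(a+b)^2$. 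Set $I_0=\ker\chi\cap\ker\chi'$; it is a subspace of $I$ of codimension $\le 2$ (hence $\le 4$ in $R$) satisfying $RI_0\subseteq I$ and $I_0 R\subseteq I$. Since no nonzero ideal lies in $I$, the two-sided ideal generated by $I_0$ must leave $I$, which already severely restricts the multiplication.

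\emph{Step 3 (finiteness — the crux).} The heart of the proof is a bound on $\dim_{\mathbb{F}_2}R$. The ``conductors'' $C_\ell=\{r\in R: rI\subseteq I\}$ and $C_r=\{r\in R: Ir\subseteq I\}$ are subalgebras lying between $I$ and $R$, so each is $I$, an intermediate codimension-$1$ subalgebra, or $R$. If $I$ is a one-sided ideal of $R$ (say $C_\ell=R$), then $R/I$ is a faithful module over $R/N$, $N$ its annihilator; the ideal-freeness of $I$ forces $N=0$ (after disposing of a small residual case when $R$ has no identity), so left multiplication embeds $R$ into $\End_{\mathbb{F}_2}(R/I)\cong M_{2}(\mathbb{F}_2)$ and $|R|\le 16$. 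In the remaining cases one argues similarly inside $C_\ell\cap C_r$, or iterates the construction of $I_0$ so as to exhibit the (zero) largest ideal of $R$ contained in $I$ as the intersection of a descending chain of subspaces of uniformly bounded successive codimension; either way $\dim_{\mathbb{F}_2}R$ is bounded by an explicit small constant. Carrying this out uniformly — in particular for rings without identity — is what I expect to be the main obstacle.

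\emph{Step 4 (enumeration).} With $\dim_{\mathbb{F}_2}R$ bounded it remains to list, up to isomorphism, all $\mathbb{F}_2$-algebras $R$ of small dimension possessing a codimension-$2$ subalgebra $I$ with the two properties of Step~1, and to record the resulting $4$-tuples up to permutation of the $S_i$. One organises this by $\dim R$ and by coarse invariants (trivial multiplication versus $R^2=R$, existence of an identity, structure of the Jacobson radical, whether $R$ is simple). For instance $\dim R=2$ forces $I=0$ and yields the zero ring on $\mathbb{F}_2\oplus\mathbb{F}_2$ and $\mathbb{F}_2\oplus\mathbb{F}_2$; in dimension $3$ one meets, among others, $\mathbb{F}_2\oplus\mathbb{F}_2\oplus\mathbb{F}_2$ covered by the three subrings $\{(x_1,x_2,x_3):x_i=x_j\}$, and the configuration inside the ring of upper triangular $2\times 2$ matrices over $\mathbb{F}_2$ with $I=\mathbb{F}_2\cdot\mathrm{id}$; a few further configurations appear in higher dimension, while $M_2(\mathbb{F}_2)$ itself is excluded because its only codimension-$1$ subalgebras (the three Borels) have no common codimension-$2$ subalgebra. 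Checking that each listed configuration is genuinely good and that the list is complete and irredundant produces exactly Examples~2.1–2.10.
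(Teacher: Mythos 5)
Your Step 1 is correct and coincides with the paper's opening reduction. The genuine gap is Step 3, which you yourself flag as ``the crux'' and ``the main obstacle'' without closing it. The conductor argument you sketch only handles the case where $I$ is a one-sided ideal, and even there the kernel $N=\{r: rR\subseteq I\}$ need not lie in $I$ when $R$ has no identity, so you cannot immediately invoke ideal-freeness; the ``remaining cases'' are left entirely to hand-waving. The irony is that you already hold the right object at the end of Step 2: your $I_0=\ker\chi\cap\ker\chi'$ is exactly the paper's $T=S_L\cap S_R$, and the paper proves directly that the two-sided ideal generated by any $t\in I_0$ is \emph{contained in} $I$ (not merely ``restricted'' by it). The only nontrivial input is that $ata,\ atb,\ bta,\ btb\in I$ for $t\in I_0$, which follows from multiplicative closure of the three $S_i$ by a short computation (the paper's Lemma 3.2); combined with $St, tS, at, ta, bt, tb\in I$ this puts $\langle t\rangle + Rt + tR + RtR$ inside $I$, forcing $t=0$. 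Hence $I_0=0$, and since $I_0$ has index at most $4$ in $I$ and $I$ has index $4$ in $R$, one gets $|R|\le 16$ in one stroke, with no case split on one-sided ideals and no trouble from the absence of an identity. Your assertion that the ideal generated by $I_0$ ``must leave $I$'' has the logic pointing the wrong way: the point is that it provably cannot leave $I$, so $I_0$ must vanish.

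Step 4 is also only a sketch, and already your sample list in dimension $2$ omits the two non-commutative order-$4$ rings (Examples 2.3 and 2.4), which is a warning sign that an ad hoc enumeration by dimension and coarse invariants is error-prone. The paper instead organizes the enumeration around a structural device you do not use: if $R$ has no identity one adjoins one, and the resulting tuple $(R^{*},S_1^{*},S_2^{*},S_3^{*})$ is again good (this needs a short argument that a small ideal of $R^{*}$ inside $S^{*}$ would manufacture an identity in $R$). One then classifies good \emph{unital} rings only --- order $4$; order $8$ split by $|J(R)|\in\{1,2,4\}$; order $16$ --- obtaining Examples 2.1, 2.5, 2.7, 2.6, 2.10, and finally recovers the identity-free good rings 2.2, 2.3, 2.4, 2.8, 2.9 by determining which index-$2$ subsets of each $R^{*}$ can be subrings. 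As it stands your proposal is a plausible plan with the two load-bearing steps (the dimension bound and the exhaustive enumeration) not carried out.
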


We say that a ring $R$ is good if there exists a good $4$-tuple of
rings $(R,S_{1},S_{2},S_{3})$. The following result is an analogue
for rings of Scorza's theorem about groups.

\begin{thm}
\label{t2} A ring $R$ is the union of three of its proper subrings
if and only if there exists a factor ring (of order $4$ or $8$) of
$R$ which is isomorphic to a good ring of Example 2.1, 2.2, 2.3,
2.4, or 2.6.
\end{thm}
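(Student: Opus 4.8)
The plan is to derive Theorem~\ref{t2} from Theorem~\ref{t1} together with a short inspection of the rings occurring in Examples 2.1--2.10.

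\emph{Sufficiency.} Suppose $R$ has a factor ring $\bar R=R/J$ isomorphic to the good ring $R_{0}$ of one of Examples 2.1, 2.2, 2.3, 2.4, 2.6, and let $\phi\colon R\to R_{0}$ be the resulting surjection. Since $R_{0}$ is good there is a good $4$-tuple $(R_{0},T_{1},T_{2},T_{3})$, so in particular $R_{0}=T_{1}\cup T_{2}\cup T_{3}$ with each $T_{i}$ a proper subring. Put $S_{i}=\phi^{-1}(T_{i})$. Then each $S_{i}$ is a subring of $R$; it is proper because $\phi$ is onto and $T_{i}\neq R_{0}$; and every $r\in R$ has $\phi(r)\in T_{i}$ for some $i$, hence $r\in S_{i}$. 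Thus $R=S_{1}\cup S_{2}\cup S_{3}$ is a union of three proper subrings. (Nothing about an identity element is needed here, since throughout ``subring'' means an additive subgroup closed under multiplication.)

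\emph{Necessity.} Conversely, assume $R=S_{1}\cup S_{2}\cup S_{3}$ with the $S_{i}$ proper subrings. The union of a chain of ideals of $R$ contained in $S_{1}\cap S_{2}\cap S_{3}$ is again an ideal of $R$ contained in $S_{1}\cap S_{2}\cap S_{3}$, so by Zorn's lemma we may pick an ideal $I$ of $R$ maximal among the ideals contained in $S_{1}\cap S_{2}\cap S_{3}$. Set $\bar R=R/I$ and $\bar S_{i}=S_{i}/I$. The $\bar S_{i}$ are proper subrings, $\bar R=\bar S_{1}\cup\bar S_{2}\cup\bar S_{3}$, and no non-trivial ideal of $\bar R$ lies inside $\bar S_{1}\cap\bar S_{2}\cap\bar S_{3}$: such an ideal has the form $J/I$ with $J\supsetneq I$ an ideal of $R$, and since $S_{i}$ is exactly the preimage of $\bar S_{i}$ this would force $J\subseteq S_{1}\cap S_{2}\cap S_{3}$, contradicting the choice of $I$. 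Hence $(\bar R,\bar S_{1},\bar S_{2},\bar S_{3})$ is a good $4$-tuple, so by Theorem~\ref{t1} it coincides with one of Examples 2.1--2.10; in particular $\bar R$ is isomorphic to the ring $R_{0}$ occurring in that example, and $\bar R$ is a factor ring of $R$.

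\emph{Reduction to the five listed cases.} It now suffices to verify, by running through Examples 2.1--2.10, that the ring $R_{0}$ of each one admits a factor ring $R_{1}$ of order $4$ or $8$ isomorphic to the good ring of Example 2.1, 2.2, 2.3, 2.4, or 2.6; for then $R_{1}$, being a factor ring of $\bar R=R/I$ and hence of $R$, is the factor ring required by the statement. When $R_{0}$ is the ring of Example 2.1, 2.2, 2.3, 2.4 or 2.6 one simply takes $R_{1}=R_{0}$, these rings having order $4$ or $8$. For each of Examples 2.5, 2.7, 2.8, 2.9, 2.10 one reads off from the explicit descriptions in Section~2 that $R_{0}$ is either already isomorphic to one of those five good rings or has a proper non-zero factor ring that is, and composes $\bar R\twoheadrightarrow R_{1}$ accordingly. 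This last piece of bookkeeping — pinning down the relevant quotients of the ``extra'' examples and checking the orders — is the only real work beyond Theorem~\ref{t1}, and it is exactly what determines the list $\{2.1,2.2,2.3,2.4,2.6\}$ in the statement.
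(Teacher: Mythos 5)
Your skeleton is correct and it is the same route the paper takes: the paper dispatches your ``sufficiency'', ``necessity'' and quotient-of-quotient steps in a single parenthetical remark (``It is easy to see that it suffices to prove only these propositions''), having already noted in the introduction that one may clearly pass to a quotient in which no non-trivial ideal lies in $S_{1}\cap S_{2}\cap S_{3}$; your Zorn's lemma argument is a perfectly good way to make that reduction precise for a not-necessarily-finite $R$, and your pullback argument for sufficiency is exactly what is intended. The one genuine shortfall is that the step you dismiss as ``bookkeeping'' is the entire content of the paper's Section 6, and you do not actually carry it out: you assert, without exhibiting anything, that each of Examples 2.5, 2.7, 2.8, 2.9, 2.10 has a factor ring of order $4$ or $8$ isomorphic to one of the rings of Examples 2.1--2.4, 2.6. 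This is true but needs the five explicit quotients: for Example 2.5 ($\Z/2\Z\oplus\Z/2\Z\oplus\Z/2\Z$) kill one coordinate to obtain Example 2.1; for Example 2.7 factor the upper triangular ring by the two-element ideal of strictly upper triangular matrices to obtain Example 2.1; for Examples 2.8 and 2.9 factor by the two-element ideal $\{0,y\}$ to obtain Examples 2.4 and 2.3 respectively; for Example 2.10 factor by the order-$4$ ideal generated by $x$ and $y$ to obtain Example 2.1. It is also worth recording (as the paper does, for completeness) that the ring of Example 2.6 has no good proper factor ring, which is why 2.6 cannot be dropped from the list in favour of a smaller example. With these five verifications inserted, your proof coincides with the paper's.
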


Note that the setup of a ring expressed as the union of three
proper subrings appeared naturally in the paper \cite{D} of
Deaconescu.

For a ring $R$ that can be expressed as the union of finitely many
proper subrings let $\sigma(R)$ be the minimal number of proper
subrings of $R$ whose union is $R$. In our last theorem we give a
formula for $\sigma(M_{n}(q))$ where $M_{n}(q)$ is the full matrix
ring of $n$-by-$n$ matrices over the field of $q$ elements where
$n \geq 2$.

\begin{thm}
\label{t3} Let $n$ be a positive integer at least $2$. Let $b$ be
the smallest prime divisor of $n$ and let $N(b)$ be the number of
subspaces of an $n$-dimensional vector space over the field of $q$
elements which have dimensions not divisible by $b$ and at most
$n/2$. Then we have
$$\sigma(M_{n}(q)) = \frac{1}{b} \underset{b \nmid
i}{\underset{i=1}{\prod^{n-1}}} (q^{n}-q^{i}) + N(b).$$
\end{thm}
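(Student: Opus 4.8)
The plan is to compute $\sigma(M_n(q))$ by understanding the maximal subrings of $R = M_n(q)$. First I would observe that a cover of $R$ by proper subrings may be refined to a cover by maximal subrings, so $\sigma(R)$ is the minimal number of maximal subrings needed. The maximal subrings of $M_n(q)$ fall into two families: those containing $I$ (the identity matrix), which are the maximal subalgebras of the $\mathbb{F}_q$-algebra $M_n(q)$, and those not containing $I$. For the unital ones, the classical description (via the Artin–Wedderburn/Noether–Deuring analysis, or the classification of maximal subalgebras of a matrix algebra) says they are either of "reducible type" — stabilizers of a proper nonzero subspace $W$, i.e. $\{x : xW \subseteq W\}$ — or of "field-extension type" — the centralizer of a prime-degree field extension $\mathbb{F}_{q^p} \hookrightarrow M_n(q)$, giving a copy of $M_{n/p}(q^p)$, which exists exactly when a prime $p$ divides $n$. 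For the non-unital maximal subrings, I would argue that such a subring $S$ has $S + \mathbb{F}_q I$ equal to $R$ or to a unital maximal subring; analysis of the kernel of $R \to R$, $x \mapsto x$ restricted appropriately, shows these contribute the subspace-stabilizer-type count indexed by subspaces whose dimension is NOT divisible by $b$.

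**Counting the two families.** For the field-extension maximal subrings, only the smallest prime divisor $b$ of $n$ is relevant for minimizing the count: the number of embeddings $\mathbb{F}_{q^b} \hookrightarrow M_n(q)$ up to the relevant conjugacy, equivalently the number of ways to give $\mathbb{F}_q^n$ the structure of an $\mathbb{F}_{q^b}$-vector space, is $\frac{1}{b}\prod_{i=1, b\nmid i}^{n-1}(q^n - q^i)$ — this comes from counting elements of $\mathrm{GL}_n(q)$ of order dividing $q^b-1$ acting with no eigenvalue in $\mathbb{F}_q$ (i.e. whose minimal polynomial is an irreducible of degree $b$), divided by $|\mathrm{GL}_{n/b}(q^b)|$ worth of overcount, and one checks this telescopes to the displayed product over $i$ with $b \nmid i$. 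This accounts for the first term. For the subspace-type subrings (unital and non-unital together), I would show the minimal sub-collection needed to cover the "remaining" elements of $R$ — those not lying in any field-extension subring — corresponds bijectively to subspaces $W$ with $\dim W \leq n/2$ and $b \nmid \dim W$; a subspace and its "complement-dimension" partner cover the same missing elements, forcing the $\dim W \le n/2$ restriction, while $b \mid \dim W$ subspace-stabilizers are already swallowed by field-extension subrings. This gives the $N(b)$ term.

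**Lower bound.** The harder direction is the lower bound: showing no cover with fewer subrings exists. Here I would exhibit, for each candidate maximal subring in the proposed optimal cover, an element (or a small set of elements) that lies in that subring and in no other maximal subring of the list — a "private" element certificate — which forces every such subring to appear in any cover. Concretely, for a field-extension subring $\mathbb{F}_{q^b}$-structure one picks a generator of the field acting irreducibly with an appropriate companion-type matrix; for a subspace-stabilizer of a $W$ with $b \nmid \dim W$ and $\dim W \le n/2$ one picks a nilpotent or idempotent-like matrix whose image/kernel pins down $W$ exactly and whose "dimension parity" obstructs it from lying in a field-extension subring. One must also verify these candidate subrings genuinely cover all of $M_n(q)$: every matrix either has its minimal polynomial divisible by an irreducible of degree dividing $b$ over $\mathbb{F}_q$ (landing it in a field-extension subring) or has a proper invariant subspace of dimension $\le n/2$ not divisible by $b$ (landing it in a subspace-stabilizer) — this dichotomy, via rational canonical form, is the crux.

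**Main obstacle.** I expect the principal difficulty to be the covering verification combined with the sharpness of the dimension conditions: showing that the $b \nmid \dim W$ and $\dim W \le n/2$ constraints are exactly right — that is, that subspace-stabilizers with $b \mid \dim W$ are dispensable (absorbed by field-extension subrings) while those with $b \nmid \dim W$ are indispensable, and symmetrically that $\dim W > n/2$ stabilizers are redundant with their smaller-dimensional partners. Handling the non-unital maximal subrings carefully (ensuring they don't secretly reduce the count, and identifying precisely which subspace-type behavior they replicate) is the subtle technical point, since the problem is about subrings, not subalgebras, and $M_n(q)$ has many non-unital subrings — e.g. $M_n(q)$ itself under a different idempotent — that must be slotted into the subspace family without double-counting.
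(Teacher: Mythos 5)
Your classification of the maximal subrings and your covering argument for the upper bound are essentially the paper's Lemmas \ref{l7} and \ref{l4} and Proposition \ref{p7}. (One correction there: the ``non-unital family'' you set aside is empty --- every maximal subring of $M_{n}(q)$ is either some $M(U)$ or a $GL(n,q)$-conjugate of $M_{n/a}(q^{a})$, and both contain the identity matrix; also, your dichotomy should read ``every irreducible factor of the characteristic polynomial has degree divisible by $b$'' for the field-extension case, with the case of a factor of degree $k>n/2$, $b\nmid k$, handled by passing to the complementary primary component of dimension $n-k\le n/2$, which is again prime to $b$.) The genuine gap is in the lower bound. An element of $H$ lying in no \emph{other member of your list} $\mathcal{H}$ only shows that $\mathcal{H}$ is an irredundant cover; it does not show that $|\mathcal{H}|$ is the minimum cardinality of a cover, because a competing cover need not be a subfamily of $\mathcal{H}$. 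And your witnesses are not private with respect to all maximal subrings: when $n$ has a second prime divisor $c$, a Singer-cycle generator lies in a conjugate of $M_{n/c}(q^{c})$ as well as in its unique conjugate of $M_{n/b}(q^{b})$; a block-diagonal pair of Singer generators on complementary subspaces of dimensions $k$ and $n-k$ lies in both $M(U)$ and $M(U\varphi_{k})$, and in further field-extension subrings whenever $\gcd(k,n)>1$. So nothing in your argument excludes a cheaper cover that absorbs these witnesses using conjugates of $M_{n/c}(q^{c})$ or stabilizers of subspaces of dimension divisible by $b$.

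What is needed, and what the paper supplies via the ``definitely unbeatable'' machinery, is a quantitative comparison: fix a witness set $\Pi$ that the members of $\mathcal{H}$ partition (conditions (1)--(3)), and prove that \emph{every} admissible maximal subring $K\notin\mathcal{H}$ satisfies $|\Pi\cap K|\le|\Pi\cap H|$ for all $H\in\mathcal{H}$ (condition (4)); a counting argument then forces any cover to have at least $|\mathcal{H}|$ members. Verifying (3) and (4) is where the real work lies: Lemmas \ref{l5} and \ref{l6} determine exactly which maximal subrings contain an element of each type $T_{k}$ (using the arithmetic of $\gcd(q^{a}-1,q^{k}-1)$ and centralizer orders), and the concluding inequalities bound $|GL(n/c,q^{c})|$ against the number of type-$T_{k}$ witnesses in each $H$ via totient estimates. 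None of this appears in your proposal, so as written the lower bound --- the ``harder direction'' you correctly identify --- is not established.
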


Note that, by Theorem \ref{t3}, $\sigma(M_{2}(2)) = 4$.

Similar investigations to Theorem \ref{t3} for groups have been
carried out in \cite{BEGHM}.

Finally we make an important remark. When trying to determine
$\sigma(R)$ for a given ring $R$ that can be expressed as the
union of finitely many proper subrings, it is sufficient to assume
that $R$ is finite. Indeed, suppose that $k = \sigma(R)$ and
$S_{1}, \ldots , S_{k}$ are proper subrings of $R$ whose union is
$R$. Then, by a result of Neumann \cite{N}, every subring $S_{i}$
($i = 1, \ldots, k$) is of finite index in $R$ (just by
considering the additive structures of all these rings). Hence $S
= S_{1} \cap \ldots \cap S_{k}$ is also a ring of finite index in
$R$. But then, by a result of Lewin \cite{L}, $S$ contains an
ideal $I$ of $R$ of finite index in $R$. Hence $R/I$ is a finite
ring with $\sigma(R/I) = \sigma(R)$.

\section{Examples}

\begin{example}
Let $R$ be the subring $$\left\{ \left(
\begin{matrix}
0  & 0 \\
0  & 0
\end{matrix} \right), \left(
\begin{matrix}
1  & 0 \\
0  & 1
\end{matrix} \right), \left(
\begin{matrix}
1  & 0 \\
0  & 0
\end{matrix} \right), \left(
\begin{matrix}
0  & 0 \\
0  & 1
\end{matrix} \right)  \right\}$$ of $M_{2}(\mathbb{Z}/2\mathbb{Z})$. This
is a commutative ring of order $4$ with a multiplicative identity.
Every non-zero element of $R$ lies inside a unique subring of
order $2$. Hence there are three proper non-zero subrings of $R$.
Let these be $S_{1}$, $S_{2}$, and $S_{3}$. Note that $R$ is
isomorphic to $\mathbb{Z}/2\mathbb{Z} \oplus
\mathbb{Z}/2\mathbb{Z}$.
\end{example}

\begin{example}
Let $R$ be the subring $$\left\{ \left(
\begin{matrix}
0  & 0 & 0 \\
0  & 0 & 0 \\
0  & 0 & 0
\end{matrix} \right), \left(
\begin{matrix}
0  & 0 & 0 \\
1  & 0 & 0 \\
0  & 0 & 0
\end{matrix} \right), \left(
\begin{matrix}
0  & 0 & 0 \\
0  & 0 & 0 \\
1  & 0 & 0
\end{matrix} \right), \left(
\begin{matrix}
0  & 0 & 0 \\
1  & 0 & 0 \\
1  & 0 & 0
\end{matrix} \right)  \right\}$$ of $M_{3}(\mathbb{Z}/2\mathbb{Z})$.
This is a commutative ring of order $4$. It has no multiplicative
identity since it is a zero ring. The ring $R$ has exactly three
subrings of order $2$. Let these be $S_{1}$, $S_{2}$, and $S_{3}$.
Note that $R$ is isomorphic to the subring $$\left\{ \left(
\begin{matrix}
0  & 0 \\
0  & 0
\end{matrix} \right), \left(
\begin{matrix}
2  & 0 \\
0  & 0
\end{matrix} \right), \left(
\begin{matrix}
0  & 0 \\
0  & 2
\end{matrix} \right), \left(
\begin{matrix}
2  & 0 \\
0  & 2
\end{matrix} \right)  \right\}$$ of $M_{2}(\mathbb{Z}/4\mathbb{Z})$.
\end{example}

\begin{example}
Let $R$ be the subring $$\left\{ \left(
\begin{matrix}
0  & 0 \\
0  & 0
\end{matrix} \right), \left(
\begin{matrix}
0  & 1 \\
0  & 0
\end{matrix} \right), \left(
\begin{matrix}
1  & 0 \\
0  & 0
\end{matrix} \right), \left(
\begin{matrix}
1  & 1 \\
0  & 0
\end{matrix} \right)  \right\}$$ of $M_{2}(\mathbb{Z}/2\mathbb{Z})$.
This is a non-commutative ring of order $4$. It has no
multiplicative identity. The ring $R$ has exactly three subrings
of order $2$. Let these be $S_{1}$, $S_{2}$, and $S_{3}$.
\end{example}

\begin{example}
Let $R$ be the subring $$\left\{ \left(
\begin{matrix}
0  & 0 \\
0  & 0
\end{matrix} \right), \left(
\begin{matrix}
0  & 1 \\
0  & 0
\end{matrix} \right), \left(
\begin{matrix}
0  & 0 \\
0  & 1
\end{matrix} \right), \left(
\begin{matrix}
0  & 1 \\
0  & 1
\end{matrix} \right)  \right\}$$ of $M_{2}(\mathbb{Z}/2\mathbb{Z})$.
This is a non-commutative ring of order $4$. It has no
multiplicative identity. The opposite ring $R^{\mathrm{op}}$ of
$R$ is the ring $R$ of Example 2.3. The ring $R$ has exactly three
subrings of order $2$. Let these be $S_{1}$, $S_{2}$, and $S_{3}$.
\end{example}

\begin{example}
Let $R$ be the subring of $M_{3}(\mathbb{Z}/2\mathbb{Z})$
consisting of all matrices of the form
$$\left(
\begin{matrix}
a  & 0 & 0 \\
0  & b & 0 \\
0  & 0 & c
\end{matrix} \right)$$ where $a$, $b$, $c$ are elements
of $\mathbb{Z}/2\mathbb{Z}$. The ring $R$ is isomorphic to
$\mathbb{Z}/2\mathbb{Z} \oplus \mathbb{Z}/2\mathbb{Z} \oplus
\mathbb{Z}/2\mathbb{Z}$. This is a commutative ring of order $8$
with a multiplicative identity. The ring $R$ has three subrings of
order $4$ containing the multiplicative identity of $R$. These are
$S_{1}$ defined by the restriction $a+b=0$, $S_{2}$ defined by the
restriction $a+c=0$, and $S_{3}$ defined by the restriction
$b+c=0$.
\end{example}

\begin{example}
Let $R$ be the subring of $M_{3}(\mathbb{Z}/2\mathbb{Z})$
consisting of all matrices of the form
$$\left(
\begin{matrix}
a  & 0 & 0 \\
b  & a & 0 \\
c  & 0 & a
\end{matrix} \right)$$ where $a$, $b$, $c$ are elements of
$\mathbb{Z}/2\mathbb{Z}$. This is a commutative ring of order $8$.
Note that the subset of $R$ obtained by imposing the restriction
$a=0$ is isomorphic to the ring $R$ of Example 2.2. Indeed $R$ can
be obtained from $R$ of Example 2.2 by adding a multiplicative
identity $1$ and imposing the relation $1+1=0$. The ring $R$ has
three subrings of order $4$ containing the multiplicative identity
of $R$. These are $S_1$ defined by the restriction $b=0$, $S_{2}$
defined by the restriction $c=0$, and $S_{3}$ defined by the
restriction $b+c=0$.
\end{example}

\begin{example}
Let $R$ be the subring of $M_{2}(\mathbb{Z}/2\mathbb{Z})$
consisting of all upper triangular matrices of the form
$$\left(
\begin{matrix}
a  & c \\
0  & b
\end{matrix} \right)$$
where $a$, $b$, $c$ are elements of $\mathbb{Z}/2\mathbb{Z}$. This
is a non-commutative ring of order $8$ containing a multiplicative
identity. The opposite ring $R^{\mathrm{op}}$ of $R$ is the
subring of lower triangular matrices of
$M_{2}(\mathbb{Z}/2\mathbb{Z})$. The rings $R$ and
$R^{\mathrm{op}}$ are isomorphic. The ring $R$ contains exactly
three subrings of order $4$ containing the multiplicative identity
of $R$. These are $S_{1}$ defined by the restriction $c=0$,
$S_{2}$ defined by the restriction $a+b=0$, and $S_{3}$ defined by
the restriction $a+b+c=0$.
\end{example}

\begin{example}
Let $R$ be the subring of $M_{4}(\mathbb{Z}/2\mathbb{Z})$
consisting of all matrices of the form
$$\left(
\begin{matrix}
0  & b & c & d \\
0  & e & 0 & 0 \\
0  & 0 & e & 0 \\
0  & 0 & 0 & e
\end{matrix} \right)$$ where $b$, $c$, $d$, $e$ are elements of
$\mathbb{Z}/2\mathbb{Z}$ subject to the restriction $b+e = 0$.
This is a non-commutative ring of order $8$ without a
multiplicative identity. Let $S_{1}$ be the subring of $R$ defined
by the restriction $c=0$, let $S_{2}$ be the subring of $R$
defined by the restriction $d=0$, and let $S_{3}$ be the subring
of $R$ defined by the restriction $c+d=0$.
\end{example}

\begin{example}
Let $R$ be the subring of $M_{4}(\mathbb{Z}/2\mathbb{Z})$
consisting of all matrices of the form
$$\left(
\begin{matrix}
0  & 0 & 0 & 0 \\
b  & e & 0 & 0 \\
c  & 0 & e & 0 \\
d  & 0 & 0 & e
\end{matrix} \right)$$ where $b$, $c$, $d$, $e$ are elements of
$\mathbb{Z}/2\mathbb{Z}$ subject to the restriction $b+e = 0$.
This is a non-commutative ring of order $8$ without a
multiplicative identity. The ring $R$ is the opposite ring
$R^{\mathrm{op}}$ of the ring $R$ of Example 2.8. Let $S_{1}$ be
the subring of $R$ defined by the restriction $c=0$, let $S_{2}$
be the subring of $R$ defined by the restriction $d=0$, and let
$S_{3}$ be the subring of $R$ defined by the restriction $c+d=0$.
\end{example}

\begin{example}
Let $R$ be the subring of $M_{4}(\mathbb{Z}/2\mathbb{Z})$
consisting of all matrices of the form
$$\left(
\begin{matrix}
a  & 0 & 0 & 0 \\
b  & e & 0 & 0 \\
c  & 0 & e & 0 \\
d  & 0 & 0 & e
\end{matrix} \right)$$ where $a$, $b$, $c$, $d$, $e$ are elements of
$\mathbb{Z}/2\mathbb{Z}$ subject to the restriction $a+b+e = 0$.
This is a non-commutative ring of order $16$ with a multiplicative
identity. The rings $R$ and $R^{\mathrm{op}}$ are isomorphic. The
ring $R$ can be obtained from $R$ of Example 2.8 or $R$ of Example
2.9 by adding a multiplicative identity $1$ and imposing the
relation $1+1=0$. Let $S_{1}$ be the subring of $R$ defined by the
restriction $c=0$, let $S_{2}$ be the subring of $R$ defined by
the restriction $d=0$, and let $S_{3}$ be the subring of $R$
defined by the restriction $c+d=0$.
\end{example}

Let $R$ be a good ring of Examples 2.3 or Example 2.8. Then
$R^{\mathrm{op}}$ is a good ring of Example 2.4 or Example 2.9
respectively. The rings $R$ and $R^{\mathrm{op}}$ are not
isomorphic since their left and right annihilators have different
sizes.

\section{Reductions}

Let $(R,S_{1},S_{2},S_{3})$ be a good $4$-tuple of rings. Then $R$
is a good ring. Let $S = S_{1} \cap S_{2} \cap S_{3}$. Note that
by Scorza's theorem, each $(S_{i},+)$ ($i \in \{ 1,2,3 \}$) has
index $2$ in $(R,+)$ and $S_{1} \cap S_{2} = S_{1} \cap S_{3} =
S_{2} \cap S_{3} = S$.

Let $2R$ denote the set of all elements of $R$ of the form $r+r$
for $r \in R$. It is easy to see that $2R$ is an ideal of $R$.
Moreover, by Scorza's theorem, the abelian groups $S_{1}$,
$S_{2}$, $S_{3}$ all have index $2$ in $R$, hence $r+r \in S_{i}$
for all $r \in R$ and $i \in \{ 1,2,3 \}$. Thus $2R$ is an ideal
of $R$ contained in $S$. This forces $2R = 0$.

Since $2R = 0$, we may assume that there exists elements $x$ and
$y$ of $R$ such that
$$(R,+)=S\oplus \{x,y,x+y,0\},$$
$$(S_1,+)=S\oplus \{x,0\},\quad (S_2,+)=S\oplus \{y,0\},\quad (S_3,+)=S\oplus \{x+y,0\}.$$

\begin{lemma}
\label{l1} For any $s\in S$ we have $sx \in S \Leftrightarrow sy
\in S$ and $xs \in S \Leftrightarrow ys \in S$.
\end{lemma}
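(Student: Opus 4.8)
The plan is to exploit the multiplicative closure of each $S_i$ together with the additive decomposition $(R,+)=S\oplus\{x,y,x+y,0\}$ and $(S_i,+)=S\oplus\{z_i,0\}$, where $z_1=x$, $z_2=y$, $z_3=x+y$. Fix $s\in S$ and suppose $sx\in S$; I want to conclude $sy\in S$. Since $S\subseteq S_3$ and $x+y\in S_3$, and $S_3$ is closed under multiplication, the product $s(x+y)=sx+sy$ lies in $S_3$. Because $sx\in S\subseteq S_3$ as well, subtracting (recall $2R=0$, so additive inverses are harmless) gives $sy\in S_3=S\oplus\{x+y,0\}$. On the other hand $s\in S\subseteq S_2$ and $y\in S_2$, so $sy\in S_2=S\oplus\{y,0\}$. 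Hence $sy$ lies in $S_2\cap S_3=S$ by the remark preceding the lemma (all pairwise intersections equal $S$). This proves $sx\in S\Rightarrow sy\in S$.

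For the reverse implication, suppose $sy\in S$. Then as above $s(x+y)=sx+sy\in S_3$, and since $sy\in S\subseteq S_3$ we get $sx\in S_3$. Also $s\in S\subseteq S_1$ and $x\in S_1$, so $sx\in S_1=S\oplus\{x,0\}$; therefore $sx\in S_1\cap S_3=S$. This gives the equivalence $sx\in S\Leftrightarrow sy\in S$.

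The statement about right multiplication, $xs\in S\Leftrightarrow ys\in S$, follows by the identical argument with the roles of the two factors reversed: from $xs\in S$ one uses $(x+y)s=xs+ys\in S_3$ and $xs\in S_3$ to deduce $ys\in S_3$, then $ys\in S_2$ since $y\in S_2$ and $s\in S_2$, so $ys\in S_2\cap S_3=S$; and symmetrically for the converse using $S_1$. Alternatively one can simply pass to the opposite ring $R^{\mathrm{op}}$, in which the same additive data holds and left multiplication becomes right multiplication, and quote the first half.

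There is essentially no obstacle here: the only things being used are that each $S_i$ is closed under multiplication, that $S\subseteq S_i$ for every $i$, that $S_2\cap S_3=S_1\cap S_3=S$ (established via Scorza's theorem in the discussion before the lemma), and that $2R=0$ so that one may freely add and subtract. The mild point to be careful about is bookkeeping of which $S_i$ contains which of $x,y,x+y$ — one must pick the two subrings whose displayed coset representatives are $y$ (resp. $x$) and $x+y$, namely $S_2$ (resp. $S_1$) and $S_3$ — but this is routine.
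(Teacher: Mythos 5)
Your proof is correct and uses essentially the same idea as the paper: both arguments hinge on the fact that $s(x+y)=sx+sy$ must land in $S_3$ while $sy$ must land in $S_2$ (resp.\ $sx$ in $S_1$), forcing the product into the common intersection $S$. The paper phrases this as a contradiction ($sy=s_2+y$ would push $s(x+y)$ outside $S_3$), whereas you argue directly via $S_2\cap S_3=S$; the difference is purely presentational.
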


\begin{proof}
Assume, for example, for a contradiction, that $sx \in S$ and $sy
\not\in S$. Then there exists $s_1,s_2\in S$ with $sx=s_1$ and
$sy=s_2+y$. This implies $s(x+y)=s_1+s_2+y \not\in S_3$ against
the fact that $S_3$ is a subring.
\end{proof}

Define $S_R:=\{s \in S\mid sx \in S\}$, $S_L:=\{s \in S\mid xs \in
S\}$, and $T:=S_L\cap S_R$. Notice that $S_R$ and $S_T$ are
subgroups of $(S,+)$ with index at most $2$, so $T$ is a subgroup
of $(S,+)$ with $|S:T|$ equal to $1$, $2$, or $4$. Moreover, by
Lemma \ref{l1}, if $t \in T$ then $\{tx,xt,ty,yt\}\subseteq S$.

\begin{lemma}
If $t \in T$ then $xty \in S$, $ytx\in S$, $xtx\in S$, and $yty\in
S$.
\end{lemma}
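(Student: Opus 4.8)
The plan is to exploit the additive decomposition $(R,+) = S \oplus \{x,y,x+y,0\}$ together with the fact that $S_1, S_2, S_3$ are subrings, reducing each claim to showing that a certain product cannot have a nonzero component along $x$, $y$, or $x+y$. Fix $t \in T$. By Lemma~\ref{l1} and the definition of $T$ we already know $tx, xt, ty, yt \in S$, and of course $S$ is closed under multiplication (being an intersection of subrings). The four elements to control are $xty$, $ytx$, $xtx$, $yty$; each is a product of the form $u \cdot (tv)$ or $(ut)\cdot v$ with $u,v \in \{x,y\}$, where the middle factor $tx$, $ty$, $xt$, or $yt$ already lies in $S$.

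First I would handle $xtx$. Write $tx = s \in S$; then $xtx = xs$. Now $xs$ lies in $S$ iff $x \notin S_i$ forces nothing extra --- but I cannot conclude $xs \in S$ directly. Instead, the trick is to play the three subrings against each other exactly as in the proof of Lemma~\ref{l1}. Suppose for contradiction $xtx \notin S$; then $xtx$ has a nonzero $x$-, $y$-, or $(x+y)$-component. Consider also $ytx = y(tx) = ys'$ where $s' = tx \in S$, and $(x+y)tx = (x+y)s'$. Since $t \in S_R$, we have $tx \in S$, but we need to track where $x \cdot (tx)$, $y\cdot(tx)$ land. Here is where I would use that $t \in T \subseteq S_R \cap S_L$ a second time via a symmetric element: note $t x \in S$ and apply the analogue of Lemma~\ref{l1} with the roles adjusted, or more cleanly, observe $x(tx) + y(tx) = (x+y)(tx)$ and that the membership pattern of these three in $S$ must be "all or exactly one outside" for the subring condition on $S_1, S_2, S_3$ to hold --- then combine with the parallel identity obtained by multiplying $ty \in S$ on the left by $x, y, x+y$, using Lemma~\ref{l1} to link the two patterns. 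The key leverage is that $tx$ and $ty$ have the \emph{same} membership behaviour under left multiplication by $x$ versus by $y$, which is a fresh instance of the Lemma~\ref{l1} mechanism.

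For $xty$ and $ytx$ the argument is the mirror image: $xty = x(ty)$ with $ty \in S$, and $ytx = y(tx)$ with $tx \in S$, so the same "subring forces all-or-one" dichotomy applies, and one derives a contradiction from assuming exactly one of $x(ty), y(ty), (x+y)(ty)$ escapes $S$ by pairing it with the corresponding statement for $tx$ in place of $ty$. The case $yty$ is symmetric to $xtx$. In all four cases the contradiction has the shape: if the target product were outside $S$, then one of $S_1, S_2, S_3$ would fail to be closed under multiplication, because a product of two of its elements would land in the "diagonal" coset not contained in that subring.

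The main obstacle I anticipate is bookkeeping: there are three cosets ($x$-type, $y$-type, $x+y$-type) an element can stray into, and for each of the four products one must check that \emph{every} possible stray component violates the subring property of \emph{some} $S_i$. A clean way to organise this is to prove a single auxiliary observation first --- something like: for $s \in S$, the set of $u \in \{x, y, x+y\}$ with $us \notin S$ is either empty or all of $\{x,y,x+y\}$, and likewise for $su$ --- which follows from Lemma~\ref{l1} plus the subring condition on $S_3$ (resp.\ the relevant $S_i$), and then apply it to $s = tx$ and $s = ty$. Granting that observation, each of the four claims collapses to noting that $t \in S_R \cap S_L$ already puts one representative product in $S$, forcing the whole orbit into $S$. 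I expect the write-up to be short once the auxiliary dichotomy is isolated.
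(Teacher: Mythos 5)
There is a genuine gap. Your auxiliary dichotomy is correct (and, stated correctly, it is ``all in $S$ or all outside $S$'', not ``all or exactly one outside'' --- the latter would contradict Lemma \ref{l1}): for $s\in S$ one has $xs\in S_1$, $ys\in S_2$, $(x+y)s\in S_3$, and adding the first two and comparing components in the decomposition $(R,+)=S\oplus\{0,x,y,x+y\}$ forces the three memberships in $S$ to coincide. But an all-or-none dichotomy can never finish the proof by itself: it only shows that $xtx$, $xty$, $ytx$, $yty$ (and the various $(x+y)$-products) all have the \emph{same} status, and it is perfectly consistent with all of them lying outside $S$. You need an anchor, i.e.\ one product in the relevant orbit actually known to lie in $S$, and your proposed anchor fails: the products that $t\in S_R\cap S_L$ puts in $S$ are $tx$, $xt$, $ty$, $yt$, none of which belongs to the orbit $\{x(ty),\,y(ty),\,(x+y)(ty)\}$ or its analogues. ``Pairing the pattern for $ty$ with the pattern for $tx$'' merely links two all-or-none dichotomies and still produces no anchor.

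The missing idea --- which is the whole content of the paper's proof --- is to use \emph{both} factorizations of the mixed products and intersect subrings: $xty=(xt)y$ is a product of two elements of $S_2$ (since $xt\in S\subseteq S_2$ and $y\in S_2$), hence lies in $S_2$, while $xty=x(ty)$ is a product of two elements of $S_1$, hence lies in $S_1$; therefore $xty\in S_1\cap S_2=S$, and likewise $ytx\in S$. You list both factorizations at the outset but never combine them in the argument. Once $xty\in S$ is secured, your dichotomy does finish off $xtx$ and $yty$ (the paper's variant is: $xt(x+y)=xtx+xty\in S_3$ while $xtx=s_1+bx\in S_1$ and $xty\in S$, which forces $b=0$). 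So the architecture is salvageable with a one-line insertion, but as written the argument is circular on the decisive point.
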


\begin{proof} Assume that $t \in T$. Since $xt \in S$, we must
have $xty \in S_2$ and since $ty \in S$, we must have $xty \in
S_1$. Hence $xty \in S_1\cap S_2=S$. The same argument works for
$ytx$. Notice that $xt \in T$ implies also that $xt(x+y)\in S_3$;
moreover we have that $xtx=s_1+bx$, $xty=s_2$ with $s_1,s_2\in S$,
$b\in \{0,1\}$. We must have $xt(x+y)=s_1+s_2+bx \in S_3$, hence
$b=0$. The same argument works for $yty$.
\end{proof}

Now assume that $T \neq \{ 0 \}$ and take $0 \neq t \in T$. We
have $RtR\subseteq S$. Indeed for any $r_1$, $r_2\in R$ we have
$r_1=s_1+a_1x+b_1y$ and $r_2=s_2+a_2x+b_2y$ for some $s_{1}$,
$s_{2} \in S$ and $a_{1}$, $a_{2}$, $b_{1}$, $b_{2} \in \{ 0, 1
\}$. Hence $r_{1}tr_{2}$ is equal to
$$s_1ts_2+a_2s_1tx+b_2s_1ty+a_1xts_2+a_1a_2xtx+a_1b_2xty+b_1yts_2+b_1a_2ytx+b_1b_2yty.$$
We would have that $RtR$ is a non-trivial ideal of $R$ contained
in $S$, a contradiction. (We may assume that $RtR$ is non-trivial.
There are three possibilities. If $Rt \not= \{ 0 \}$, then $Rt$ is
a non-trivial ideal of $R$ contained in $S$, a contradiction. If
$tR \not= \{ 0 \}$, then $tR$ is a non-trivial ideal of $R$
contained in $S$, a contradiction. Finally, if $Rt = tR = \{ 0
\}$, then the abelian group generated by $t$ is an ideal of $R$
contained in $S$, a contradiction.) This means that $T = \{ 0 \}$
and this implies that $|S|=|S:T|$ is $1$, $2$, or $4$.

We proved the following reduction.

\begin{prop}
Let $R$ be a good ring. Then $|R| = 4$, $8$, or $16$.
\end{prop}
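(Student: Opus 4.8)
The plan is to combine the additive reduction already obtained with the analysis of the subgroup $T=S_L\cap S_R$ of $(S,+)$. Recall that $2R=0$, so $(R,+)=S\oplus\{0,x,y,x+y\}$ with the $S_i$ as displayed, and hence $|R|=4|S|$. Everything therefore hinges on bounding $|S|$. The argument above shows that if there is a nonzero $t\in T$, then $RtR\subseteq S$: indeed for arbitrary $r_1=s_1+a_1x+b_1y$ and $r_2=s_2+a_2x+b_2y$, the product $r_1tr_2$ expands into the nine listed terms, and by the two preceding lemmas each of $tx,xt,ty,yt,xty,ytx,xtx,yty$ lies in $S$, while terms like $s_1ts_2$, $a_2s_1tx$, etc., plainly lie in $S$ as well (using that $S$ is a subring and $t\in T$). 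So $RtR$ is an ideal of $R$ contained in $S$; since $R$ is good this ideal must be trivial, and a short case analysis (on whether $Rt$, $tR$ are zero) shows even the additive subgroup $\langle t\rangle$ would be an ideal inside $S$, contradicting goodness. Hence $T=\{0\}$.

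Next I would invoke the index bound on $T$ established just before Lemma~1.2: $S_R$ and $S_L$ are subgroups of $(S,+)$ of index at most $2$ (index exactly the size of the image under the additive homomorphisms $s\mapsto sx+S$ and $s\mapsto xs+S$ into $R/S\cong\Z/2\Z$), so $T=S_L\cap S_R$ has index $1$, $2$, or $4$ in $(S,+)$. Combined with $T=\{0\}$ this gives $|S|=|S:T|\in\{1,2,4\}$. Therefore $|R|=4|S|\in\{4,8,16\}$, which is exactly the assertion of the proposition.

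There is essentially one thing to be careful about, and it is the main (though minor) obstacle: justifying cleanly that $S_R$ and $S_L$ really do have index at most $2$, i.e. that $s\mapsto sx\bmod S$ is additive. This is immediate because $(s+s')x=sx+s'x$ and reduction mod the additive subgroup $S$ is a group homomorphism $R/S\to$ itself of order $2$; the kernel of $s\mapsto sx+S$ restricted to $S$ is precisely $S_R$, giving $|S:S_R|\le 2$, and likewise for $S_L$. One should also note that the three "possibilities" in the $RtR$ argument are genuinely exhaustive: if $RtR\ne\{0\}$ then either $Rt\ne\{0\}$ or $tR\ne\{0\}$ (forcing a nonzero one-sided ideal, which is necessarily two-sided here since $R/S$ is commutative of order $4$ and $Rt\subseteq S$, $tR\subseteq S$ — in fact $R(Rt)=RtR\subseteq S$ and $(Rt)R\subseteq RtR\subseteq S$, wait, one checks $Rt$ is a left ideal and $RtR\subseteq S$ shows it is also closed under right multiplication, hence a two-sided ideal), and if both $Rt=tR=\{0\}$ then $\Z t=\{0,t\}$ is itself an ideal. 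In every case we reach a nonzero ideal contained in $S$, contradicting that $R$ is good. Assembling these observations completes the proof.

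\begin{proof}
Since $2R=0$, the additive reduction above gives $(R,+)=S\oplus\{0,x,y,x+y\}$, so $|R|=4|S|$; it suffices to show $|S|\in\{1,2,4\}$.

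The maps $s\mapsto sx+S$ and $s\mapsto xs+S$ are additive homomorphisms from $(S,+)$ into the order-$2$ group $(R/S,+)$, with kernels $S_R$ and $S_L$ respectively. Hence $S_R$ and $S_L$ are subgroups of $(S,+)$ of index at most $2$, so $T=S_L\cap S_R$ has index $1$, $2$, or $4$ in $(S,+)$.

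Suppose $T\ne\{0\}$ and pick $0\ne t\in T$. By Lemma~\ref{l1} and the lemma following it, the elements $tx,xt,ty,yt,xty,ytx,xtx,yty$ all lie in $S$. Writing arbitrary $r_1=s_1+a_1x+b_1y$ and $r_2=s_2+a_2x+b_2y$ with $s_1,s_2\in S$ and $a_i,b_i\in\{0,1\}$, the product
$$r_1tr_2=s_1ts_2+a_2s_1tx+b_2s_1ty+a_1xts_2+a_1a_2xtx+a_1b_2xty+b_1yts_2+b_1a_2ytx+b_1b_2yty$$
is a sum of elements of $S$, so $RtR\subseteq S$. If $RtR\ne\{0\}$, then $Rt\ne\{0\}$ or $tR\ne\{0\}$; in the first case $Rt$ is a left ideal, and since $(Rt)R\subseteq RtR\subseteq S$ shows it is closed under right multiplication, it is a nonzero two-sided ideal contained in $S$; the case $tR\ne\{0\}$ is symmetric. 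If instead $Rt=tR=\{0\}$, then $\{0,t\}$ is itself a nonzero ideal of $R$ contained in $S$. Each alternative contradicts the assumption that $R$ is good. Hence $T=\{0\}$.

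Combining, $|S|=|S:T|\in\{1,2,4\}$, so $|R|=4|S|\in\{4,8,16\}$.
\end{proof}
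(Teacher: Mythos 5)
Your proof is correct and follows essentially the same route as the paper's own argument: the decomposition $(R,+)=S\oplus\{0,x,y,x+y\}$, the subgroup $T=S_L\cap S_R$, and the observation that a nonzero $t\in T$ yields $RtR\subseteq S$ and hence a nonzero ideal inside $S$, forcing $T=\{0\}$ and $|S|=|S:T|\le 4$. One small slip worth fixing: $(R/S,+)$ has order $4$, not $2$; the bound $|S:S_R|\le 2$ holds because the image of $s\mapsto sx+S$ lies in $S_1/S$ (since $s,x\in S_1$ and $S_1$ is a subring), and it is $S_1/S$ that has order $2$.
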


Suppose that $M$ is a ring with or without a multiplicative
identity. Then consider the abelian group $M^{*} = M \oplus
\langle u \rangle$ with $u + u = 0$. Now define a multiplication
on $M^{*}$ by setting $u$ to be the identity on $M^{*}$ and
extending the product according to the distributive laws. Thus
$M^{*}$ becomes a ring with a multiplicative identity.

\begin{prop}
\label{p3} Let $(R,S_{1},S_{2},S_{3})$ be a good $4$-tuple of
rings. Suppose that $R$ has no multiplicative identity. Then
$(R^{*},S_{1}^{*},S_{2}^{*},S_{3}^{*})$ is also a good $4$-tuple
of rings where a unique multiplicative identity was added to the
four rings $R$, $S_{1}$, $S_{2}$, and $S_{3}$.
\end{prop}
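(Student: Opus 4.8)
The plan is to verify the two defining properties of a good $4$-tuple for $(R^{*},S_{1}^{*},S_{2}^{*},S_{3}^{*})$: first that the $S_{i}^{*}$ are proper subrings of $R^{*}$ whose union is $R^{*}$, and second that no non-trivial ideal of $R^{*}$ lies inside $S_{1}^{*}\cap S_{2}^{*}\cap S_{3}^{*}$. The first property is almost formal. Each $S_{i}^{*}=S_{i}\oplus\langle u\rangle$ is closed under the multiplication of $R^{*}$ because $u$ acts as identity and $S_{i}$ is a subring of $R$, so $S_{i}^{*}$ is a subring; it is proper since $S_{i}$ is proper in $R$ and the new coset $S_{i}+u$ is a proper subset of $R+u$. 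For the covering, note that every element of $R^{*}$ is either in $R$ or of the form $r+u$ with $r\in R$; in the first case it lies in some $S_{i}$ (since $R=S_{1}\cup S_{2}\cup S_{3}$) hence in $S_{i}^{*}$, and in the second case $r\in S_{i}$ for some $i$ forces $r+u\in S_{i}^{*}$. So $R^{*}=S_{1}^{*}\cup S_{2}^{*}\cup S_{3}^{*}$.

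The substance is in the ideal condition. First I would identify $S_{1}^{*}\cap S_{2}^{*}\cap S_{3}^{*}$. Since additively $S_{i}^{*}=S_{i}\oplus\langle u\rangle$ and $S_{1}\cap S_{2}\cap S_{3}=S$ (recorded in Section 3), we get $S_{1}^{*}\cap S_{2}^{*}\cap S_{3}^{*}=S\oplus\langle u\rangle=S^{*}$. So suppose $I^{*}$ is a non-zero ideal of $R^{*}$ contained in $S^{*}$; I must derive a contradiction. The key observation is that $I:=I^{*}\cap R$ is an ideal of $R$: it is an additive subgroup, and for $r\in R$, $x\in I$ one has $rx,xr\in I^{*}$ (ideal in $R^{*}$) and $rx,xr\in R$ (product of two elements of $R$, since $R$ is a two-sided ideal of $R^{*}$), so $rx,xr\in I$. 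Moreover $I=I^{*}\cap R\subseteq S^{*}\cap R=S$. Since $R$ is a good ring, $I=\{0\}$; that is, $I^{*}\cap R=\{0\}$.

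Now I finish by showing $I^{*}\cap R=\{0\}$ together with $I^{*}\ne\{0\}$ is impossible. Since $I^{*}\subseteq S^{*}=S\oplus\langle u\rangle$ and $I^{*}\cap R=I^{*}\cap S=\{0\}$, the projection $I^{*}\to\langle u\rangle$ is injective, so $|I^{*}|\le 2$ and $I^{*}=\{0,s+u\}$ for some $s\in S$. Being an ideal, $I^{*}$ is closed under multiplication by the identity $u$, which is automatic, but also closed under left multiplication by $u$ again trivially; the real constraint is closure under multiplication by elements of $R$. Take any $r\in R$: then $r(s+u)=rs+r\in I^{*}$. This element lies in $R$ (as $rs\in R$ and $r\in R$), so by $I^{*}\cap R=\{0\}$ we get $rs+r=0$, i.e. $rs=r$ for every $r\in R$; symmetrically $sr=r$ for every $r\in R$. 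Thus $s$ would be a two-sided multiplicative identity of $R$, contradicting the hypothesis that $R$ has no multiplicative identity. Hence $I^{*}=\{0\}$ and the $4$-tuple is good. The main obstacle is the last step — correctly exploiting the absence of an identity in $R$ to kill the one remaining possible two-element ideal $\{0,s+u\}$; everything else is routine bookkeeping with the additive decomposition.
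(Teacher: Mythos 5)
Your proposal is correct and follows essentially the same route as the paper's own proof: both reduce to showing that a non-zero ideal $J$ of $R^{*}$ contained in $S^{*}$ would have $J\cap R=\{0\}$ (hence $|J|\le 2$), and then that its non-zero element $s+u$ would force $s$ to act as a two-sided multiplicative identity on $R$, a contradiction. The only cosmetic difference is that the paper bounds $|J|$ via $|R^{*}:R|=2$ rather than via the projection onto $\langle u\rangle$, and writes the generator as $u-r$ instead of $s+u$.
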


\begin{proof}
Clearly $R^{*} = S_{1}^{*} \cup S_{2}^{*} \cup S_{3}^{*}$ is again
a union of proper subrings. Assume that $J$ is an ideal of $R^{*}$
with $J \subseteq S^{*} := S_{1}^{*} \cap S_{2}^{*} \cap
S_{3}^{*}$. Notice that $I := J \cap S = J \cap R$ is an ideal of
$R$ contained in $S$ hence $I = \{ 0 \}$. Since $|R^{*}:R| = 2$,
this implies $|J| \leq 2$. Assume, by contradiction, that $J \not=
\{ 0 \}$. Then there exists $u \not= r \in R$ such that $J =
\langle u - r \rangle$. Notice that $r \not= 0$. Then for any
non-zero $z \in R$ we have $z(u-r) = z - zr$ and $(u-r)z = z-rz$.
Both these expressions are in $R \cap J = \{ 0 \}$, hence $z = rz
= zr$, in other words $r$ behaves as a multiplicative identity in
$R$. This is a contradiction.
\end{proof}

\begin{prop}
\label{p1} Let $(R,S_{1},S_{2},S_{3})$ be a good $4$-tuple of
rings. Suppose that $R$ contains a multiplicative identity, $1$.
Then either $R \cong \mathbb{Z}/2\mathbb{Z} \oplus
\mathbb{Z}/2\mathbb{Z}$ or $1 \in S_{i}$ for all $i$ with $1 \leq
i \leq 3$.
\end{prop}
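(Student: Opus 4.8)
The plan is to work with the additive decomposition already established before Lemma \ref{l1}: write $(R,+) = S \oplus \{0,x,y,x+y\}$ with $(S_1,+) = S \oplus \{0,x\}$, $(S_2,+) = S \oplus \{0,y\}$, $(S_3,+) = S \oplus \{0,x+y\}$, and recall from the reduction above that $T := S_L \cap S_R = \{0\}$, so $|S| \in \{1,2,4\}$ and hence $|R| \in \{4,8,16\}$. Write $1 = s_0 + a x + b y$ with $s_0 \in S$ and $a,b \in \{0,1\}$. The heart of the argument is to show that if the pair $(a,b)$ is not $(0,0)$, then $R \cong \mathbb{Z}/2\mathbb{Z} \oplus \mathbb{Z}/2\mathbb{Z}$; equivalently, that $1 \notin S$ forces $|S| = 1$ and $S = \{0\}$.

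First I would record the basic consequence of $1$ being the identity: for every $r \in R$ we have $r = 1 \cdot r = r \cdot 1$. Apply this with $r$ ranging over $S$: for $s \in S$, $s = s_0 s + a(sx) + b(sy)$ and $s = s_0 s + a(xs) + b(ys)$ (coefficients in the $\{0,x,y,x+y\}$-part being read mod $2$). If, say, $a = 1$, then looking at the $x$-component of $s = 1 \cdot s$ we learn that the $x$-component of $sx$ (plus, if $b=1$, the $x$-component of $sy$) must vanish, and by Lemma \ref{l1} the membership $sx \in S$ is equivalent to $sy \in S$; pushing this through should force every $s \in S$ to lie in $S_R$, i.e. $S_R = S$, and symmetrically (using $s = s \cdot 1$) $S_L = S$. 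But then $T = S_L \cap S_R = S$, and we proved $T = \{0\}$, so $S = \{0\}$. The same conclusion follows if $b = 1$. Hence if $(a,b) \neq (0,0)$ then $S = \{0\}$, so $|R| = 4$, and a good $4$-tuple with $|R|=4$ and a multiplicative identity is, by inspection of the additive/multiplicative structure (only Example 2.1 has an identity), forced to have $R \cong \mathbb{Z}/2\mathbb{Z} \oplus \mathbb{Z}/2\mathbb{Z}$.

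The remaining case is $(a,b) = (0,0)$, i.e. $1 \in S$. Then $1 \in S = S_1 \cap S_2 \cap S_3$, so $1 \in S_i$ for all $i$, which is exactly the second alternative. I expect the main obstacle to be the bookkeeping in the previous paragraph: one must be careful that the argument genuinely yields $S_R = S$ (and $S_L = S$) rather than merely $|S:S_R| \le 2$, and this is where Lemma \ref{l1} — the equivalence $sx \in S \Leftrightarrow sy \in S$ and $xs \in S \Leftrightarrow ys \in S$ — is doing the essential work, since it rules out the possibility that the $x$- and $y$-components of $sx$ and $sy$ conspire to cancel in $s = 1\cdot s$ while $s \notin S_R$. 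A clean way to package this: if $s \notin S_R$ then $sx = s' + x$ and $sy = s'' + y$ for some $s',s'' \in S$ (using Lemma \ref{l1}), so the $\{x,y,x+y\}$-component of $a(sx) + b(sy)$ equals $ax + by$, which is nonzero when $(a,b)\neq(0,0)$, contradicting $s = s_0 s + a(sx) + b(sy) \in S$. Thus $S_R = S$, and symmetrically $S_L = S$, giving $S = T = \{0\}$ as claimed.
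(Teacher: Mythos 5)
Your proof is correct, but it takes a genuinely different route from the paper's. The paper's argument is shorter and does not use Lemma \ref{l1} or the reduction $T=\{0\}$ at all: it observes that any $S_i$ with $1\notin S_i$ is automatically a two-sided ideal of $R$ (since $S_i$ has additive index $2$, every element of $R$ is $s$ or $1+s$ with $s\in S_i$, and $(1+s_1)s_2=s_2+s_1s_2\in S_i$, likewise on the other side). If two of the $S_i$ miss $1$, their intersection $S$ is then an ideal of $R$ contained in $S$, hence zero, giving $|R|=4$ and forcing $R\cong\mathbb{Z}/2\mathbb{Z}\oplus\mathbb{Z}/2\mathbb{Z}$; if at most one misses $1$, then two contain $1$ and so $1\in S_i\cap S_j=S\subseteq S_k$ for all $k$. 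Your route instead writes $1=s_0+ax+by$ and, when $(a,b)\neq(0,0)$, uses $s\cdot 1=s$ and $1\cdot s=s$ together with Lemma \ref{l1} to force $S_R=S_L=S$, hence $S=T=\{0\}$; this is legitimate because $T=\{0\}$ is established in Section 3 before this proposition and independently of it. The ``clean packaging'' in your final paragraph is the version to keep: the earlier sentence about the $x$-component of $sy$ is off (as $sy\in S_2=S\oplus\{0,y\}$ has no $x$-component), and $1\cdot s$ versus $s\cdot 1$ get interchanged once, but neither slip affects the packaged argument. Your endgame for $|R|=4$ (``only Example 2.1 has an identity'') should be spelled out as the paper does --- $\{0,u\}$ and $\{0,1+u\}$ both being subrings forces $u^2\in\{0,u\}$ and $u^2\in\{1,u\}$, hence $u^2=u$ and $R\cong\mathbb{Z}/2\mathbb{Z}\oplus\mathbb{Z}/2\mathbb{Z}$ --- but that one-line computation is exactly what the paper's proof also performs. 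What the paper's approach buys is a clean standalone fact (an index-$2$ subring missing $1$ is an ideal) and independence from the $S_R,S_L,T$ apparatus; what yours buys is a uniform reuse of the machinery already set up in the reductions.
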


\begin{proof}
If $i$ is an index with $1 \not\in S_{i}$ then $S_{i}$ is an ideal
in $R$ (since $S_{i}$ has index $2$ in $R$ and $(1+s_{1})s_{2} \in
S_{i}$ and $s_{2}(1+s_{1}) \in S_{i}$ for all $s_{1}$, $s_{2} \in
S_{i}$). Suppose that there exist indices $i \not= j$ with $1
\not\in S_{i}$ and $1 \not\in S_{j}$. Then $S_{i} \cap S_{j} =
S_{1} \cap S_{2} \cap S_{3}$ is an ideal in $R$. Hence $S_{i} \cap
S_{j} = \{ 0 \}$ and so $R \leq R/S_{i} \oplus R/S_{j}$. This
forces $|R|=4$ and $R = \langle 1,x \rangle$. Moreover $\langle
1+x \rangle$ must be a subring and so ${(1+x)}^{2} = 1+x^{2} \in
\langle 1+x \rangle$ hence $x = x^{2}$. Thus $R = \langle 1,x
\rangle \cong \mathbb{Z}/2\mathbb{Z} \oplus
\mathbb{Z}/2\mathbb{Z}$. So we may assume, without loss of
generality, that $1 \in S_{1} \cap S_{2} = S_{1} \cap S_{2} \cap
S_{3}$ which finishes the proof of the proposition.
\end{proof}

\section{Rings with multiplicative identity}

In this section we will classify all good $4$-tuples of rings
$(R,S_{1},S_{2},S_{3})$ where $R$ is a ring with a multiplicative
identity.

\begin{prop}
Let $(R,S_{1},S_{2},S_{3})$ be a good $4$-tuple of rings. Suppose
that $R$ has a multiplicative identity and that $|R|=4$. Then
$(R,S_{1},S_{2},S_{3})$ is of Example 2.1.
\end{prop}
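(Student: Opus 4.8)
The plan is to use the structural decomposition already established in Section 3 together with Proposition \ref{p1}. Since $|R| = 4$ and $2R = 0$, we have $|S| = |S_1 \cap S_2 \cap S_3| = 1$, so $S = \{0\}$ and $(R,+) = \{0, x, y, x+y\}$ with $(S_1,+) = \{0,x\}$, $(S_2,+) = \{0,y\}$, $(S_3,+) = \{0,x+y\}$. First I would invoke Proposition \ref{p1}: either $R \cong \mathbb{Z}/2\mathbb{Z} \oplus \mathbb{Z}/2\mathbb{Z}$, in which case we are done because Example 2.1 is precisely the statement that the three proper nonzero subrings of $\mathbb{Z}/2\mathbb{Z}\oplus\mathbb{Z}/2\mathbb{Z}$ form a good $4$-tuple, or else $1 \in S_1 \cap S_2 \cap S_3 = S = \{0\}$, forcing $1 = 0$ and hence $R = \{0\}$, which is absurd. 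So the first branch must hold.

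Next I would check that once $R \cong \mathbb{Z}/2\mathbb{Z} \oplus \mathbb{Z}/2\mathbb{Z}$ as a ring, the good $4$-tuple $(R, S_1, S_2, S_3)$ really is (up to the identifications of isomorphism and permutation built into the definition) the one in Example 2.1. The point is that $\mathbb{Z}/2\mathbb{Z}\oplus\mathbb{Z}/2\mathbb{Z}$ has exactly three proper nonzero subrings: each nonzero element generates a subring of order $2$ (since the ring has characteristic $2$, the additive group generated by any element is already closed under multiplication as $e^2 \in \{0,e\}$ for each of the three nonzero elements $(1,0), (0,1), (1,1)$), and distinct nonzero elements generating the same order-$2$ subring would force that subring to have order $\ge 3$. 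Since each $S_i$ is a proper subring of order $2$ (index $2$ in $(R,+)$ by Scorza) and the three are pairwise distinct with intersection $\{0\}$, they are forced to be exactly these three subrings. Hence $(R,S_1,S_2,S_3)$ coincides with the $4$-tuple of Example 2.1.

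I do not anticipate a serious obstacle here: the proposition is essentially a bookkeeping consequence of Proposition \ref{p1} and the reduction $2R = 0$. The only mild subtlety is making sure that the "no nontrivial ideal contained in $S$" condition is automatically satisfied (it is, since $S = \{0\}$) and that we have correctly matched the abstract ring $\mathbb{Z}/2\mathbb{Z}\oplus\mathbb{Z}/2\mathbb{Z}$ with its concrete realization as the diagonal subring of $M_2(\mathbb{Z}/2\mathbb{Z})$ displayed in Example 2.1 — but this identification is recorded in Example 2.1 itself. So the proof reduces to: apply Proposition \ref{p1}, rule out the degenerate case $1 = 0$, and observe that the three index-$2$ subrings of $\mathbb{Z}/2\mathbb{Z}\oplus\mathbb{Z}/2\mathbb{Z}$ are unique, hence must be the $S_i$.
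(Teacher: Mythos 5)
Your proof is correct, and it takes a slightly different route from the paper's. You derive $S = S_{1}\cap S_{2}\cap S_{3} = \{0\}$ from $(R,+) = S \oplus \{0,x,y,x+y\}$ and $|R|=4$, then invoke Proposition \ref{p1}: the branch $1 \in S_{i}$ for all $i$ would force $1 \in S = \{0\}$, which is absurd, so $R \cong \mathbb{Z}/2\mathbb{Z} \oplus \mathbb{Z}/2\mathbb{Z}$ immediately, and the three $S_{i}$, being pairwise distinct subrings of order $2$ with each nonzero element idempotent, must be the three subrings of Example 2.1. The paper instead argues directly and self-containedly: writing $R = \{0,1,a,1+a\}$, the order-$2$ subrings containing $a$ and $1+a$ must be multiplicatively closed, and $(1+a)^{2} = 1 + a^{2}$ then forces $a^{2} = a$, giving the same conclusion. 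The two arguments are close relatives, since the $|R|=4$ case inside the proof of Proposition \ref{p1} performs exactly this computation ($(1+x)^{2} = 1 + x^{2} \in \langle 1+x\rangle$ forces $x^{2}=x$); your version buys the economy of not repeating it, while the paper's version keeps this proposition independent of the dichotomy about which $S_{i}$ contain $1$. No gaps: the distinctness of the $S_{i}$ and the fact that each has order $2$ follow from the Scorza-based reductions of Section 3, as you note.
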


\begin{proof}
We may assume that $R = \{ 0,1,a,1+a \}$, that $1+1 = 0$, that
$a^{2}=0$ or $1$, and that ${(1+a)}^{2} = 0$ or $1+a$. The latter
two conditions force $a^{2}=a$. This implies the result.
\end{proof}

\begin{prop}
Let $(R,S_{1},S_{2},S_{3})$ be a good $4$-tuple of rings. Suppose
that $R$ has a multiplicative identity and that $|R|=8$. Suppose
that the Jacobson radical $J(R)$ of $R$ is trivial. Then
$(R,S_{1},S_{2},S_{3})$ is of Example 2.5.
\end{prop}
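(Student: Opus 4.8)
The plan is to use the classical structure theory of finite semisimple rings. Since $|R| = 8$ and $J(R) = 0$, the ring $R$ is semisimple, so by the Artin--Wedderburn theorem it is a direct sum of matrix rings over finite fields. Because $8 = 2^3$, the additive group of $R$ is an elementary abelian $2$-group (recall $2R = 0$ from the reductions of Section 3), so each simple summand $M_{n_i}(\mathbb{F}_{q_i})$ has $2$-power order with $q_i$ a power of $2$; comparing orders, the only possibility with $\sum (\text{$2$-adic sizes}) = 3$ is $R \cong \mathbb{F}_2 \oplus \mathbb{F}_2 \oplus \mathbb{F}_2$, the other candidates $\mathbb{F}_8$, $\mathbb{F}_2 \oplus \mathbb{F}_4$, and $M_2(\mathbb{F}_2)$ being ruled out because they have no index-$2$ subrings at all (for $\mathbb{F}_8$ and $\mathbb{F}_4$, any subring containing $1$ is a subfield, and neither $\mathbb{F}_8$ nor $\mathbb{F}_4$ has a subfield of index $2$; for $M_2(\mathbb{F}_2)$, which is simple, a subring of index $2$ would have to contain the identity but $M_2(\mathbb{F}_2)$ is generated as a ring by any two non-commuting elements and one checks directly it has no proper subring of index $2$ containing $1$).

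Next I would pin down the subrings. By Proposition \ref{p1}, since $R \cong \mathbb{F}_2^3 \not\cong \mathbb{F}_2 \oplus \mathbb{F}_2$, we have $1 \in S_1 \cap S_2 \cap S_3 = S$, so each $S_i$ is a subring of index $2$ containing the identity $(1,1,1)$. Writing elements of $R = \mathbb{F}_2^3$ as triples $(a,b,c)$, a subring containing $1$ is in particular an $\mathbb{F}_2$-subspace closed under coordinatewise multiplication and containing $(1,1,1)$. An index-$2$ subspace is the kernel of a nonzero linear functional $\ell$ on $\mathbb{F}_2^3$; the condition $(1,1,1) \in \ker \ell$ restricts $\ell$ to the subspace of functionals vanishing on $(1,1,1)$, and among these one checks that exactly the three functionals $a+b$, $a+c$, $b+c$ (i.e.\ those using exactly two coordinates) give subsets closed under multiplication, while $a+b+c$ does not (e.g.\ $(1,1,0)$ and $(0,1,1)$ lie in $\ker(a+b+c)$ but their product $(0,1,0)$ does not). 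Hence the only index-$2$ subrings of $R$ containing $1$ are $S_1 : a+b = 0$, $S_2 : a+c = 0$, $S_3 : b+c = 0$, which are precisely the three subrings of Example 2.5, and since by Scorza's theorem the three $S_i$ are pairwise distinct index-$2$ subrings, they must be exactly these three. Finally $S = S_1 \cap S_2 \cap S_3$ is the diagonal $\{(0,0,0),(1,1,1)\}$, which contains no nonzero ideal of $R$ (the ideals of $\mathbb{F}_2^3$ are the coordinate sub-sums, none of which is contained in the diagonal), so the $4$-tuple is good and is exactly that of Example 2.5.

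The main obstacle is the case analysis ruling out the non-split semisimple possibilities of order $8$: one must be careful that "no subring of index $2$" is verified for $\mathbb{F}_8$, $\mathbb{F}_2 \oplus \mathbb{F}_4$, and $M_2(\mathbb{F}_2)$, since a priori a subring of index $2$ need not be a subfield or contain the identity unless one invokes Proposition \ref{p1} first. The cleanest route is: apply Proposition \ref{p1} at the outset, so we may assume $1 \in S_i$ for each $i$ (the alternative $R \cong \mathbb{F}_2 \oplus \mathbb{F}_2$ has order $4$, not $8$, so is excluded), reducing everything to unital subrings; then for a unital subring $S$ of a direct sum $\bigoplus M_{n_i}(\mathbb{F}_{q_i})$ one has control via the projections. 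For $M_2(\mathbb{F}_2)$ a short direct argument (it has $16$ elements, is simple, and its only unital subrings are $\mathbb{F}_2$, $\mathbb{F}_4$, two copies of $\mathbb{F}_2 \oplus \mathbb{F}_2$-type upper/lower data, and the whole ring — none of index $2$) finishes it; for $\mathbb{F}_8$ and $\mathbb{F}_2 \oplus \mathbb{F}_4$ the subfield/projection argument is immediate. Everything after the order-$8$ semisimple ring is identified as $\mathbb{F}_2^3$ is a finite, routine computation.
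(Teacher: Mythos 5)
Your overall strategy is the same as the paper's: Artin--Wedderburn reduces the problem to the semisimple rings of order $8$, the non-split cases are eliminated, and the remaining case $\GF(2)\oplus\GF(2)\oplus\GF(2)$ is matched with Example 2.5 (your explicit enumeration of the unital index-$2$ subrings of $\mathbb{F}_2^{3}$ via linear functionals is correct, and is actually more complete than what the paper writes for that step). However, two of your stated justifications are wrong. First, $M_2(\mathbb{F}_2)$ has order $16$, not $8$, so it was never a candidate; listing it and then ``ruling it out'' signals a miscount of the Wedderburn decompositions of order $2^3$ (the correct list is exactly $\GF(8)$, $\GF(4)\oplus\GF(2)$, $\GF(2)^{3}$). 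Second, and more seriously, the claim that $\mathbb{F}_2\oplus\mathbb{F}_4$ ``has no index-$2$ subrings at all'' is false: $\mathbb{F}_2\oplus\mathbb{F}_2$ is a unital subring of order $4$, and $0\oplus\mathbb{F}_4$ is another (non-unital) one. The supporting assertion that $\mathbb{F}_4$ has no subfield of index $2$ is also false, since $\mathbb{F}_2\subset\mathbb{F}_4$ has index $2$. So the step eliminating $\GF(4)\oplus\GF(2)$ fails as written.

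That case is repairable, and the paper handles it differently: let $a$ generate $\GF(4)^{\times}$; the element $(a,1)$ must lie in some $S_i$, and by Proposition \ref{p1} so does $(1,1)$; but then $S_i$ contains $0$, $(1,1)$, $(a,1)$, $(a,1)^{2}=(a+1,1)$ and $(a,1)+(1,1)=(a+1,0)$, so $|S_i|>4$, a contradiction. (Alternatively, your projection idea does work if carried out: the projection of a unital order-$4$ subring to the $\mathbb{F}_4$ factor must be $\mathbb{F}_2$, since no unital ring homomorphism $\mathbb{F}_4\to\mathbb{F}_2$ exists, so $\mathbb{F}_2\oplus\mathbb{F}_2$ is the \emph{unique} unital subring of index $2$ and three distinct ones cannot exist --- but that is not the argument you gave.) Your elimination of $\GF(8)$ is fine: any element outside the prime field generates the whole ring, so no proper subring of order $4$ can contain it.
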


\begin{proof}
Since $|R|=8$ and $J(R)= \{ 0 \}$, by the Artin-Wedderburn
theorem, there are three possibilities for $R$. The ring $R$ can
be isomorphic to $GF(8)$, to $GF(4) \oplus GF(2)$, or to $GF(2)
\oplus GF(2) \oplus GF(2)$. In the first case no proper subring of
$R$ contains the primitive elements of $R$. Suppose that the
second case holds. Let $a$ be a generator of the multiplicative
group of $GF(4)$. Then the element $(a,1)$ must be contained in a
proper subring of $R = GF(4) \oplus GF(2)$, say in $S_{1}$. But
then $S_{1}$ cannot be a ring of order $4$ since $(1,1)$ is also
contained in $S_{1}$ by Proposition \ref{p1}. This is a
contradiction. Hence only the third case can hold. But the third
case can indeed hold as shown by Example 2.5.
\end{proof}

We continue with two easy lemmas.

\begin{lemma}
\label{l2} Let $R$ be a good ring of order $8$. Suppose that $R$
has a multiplicative identity. Then for any $r \in R$ different
from $0$ or $1$, the elements $0$, $1$, $r$, $1+r$ form a subring
of $R$.
\end{lemma}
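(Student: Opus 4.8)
The plan is to reduce the assertion to multiplicative closure and then exploit the fact that $r$ must lie in one of the three covering subrings, which has order exactly $4$ and contains $1$.

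First I would recall the standing reductions from Section 3: since $R$ is a good ring we have $2R=0$, so $(R,+)$ is an elementary abelian $2$-group of order $8$; and since $|R|=8\neq 4$, Proposition \ref{p1} gives $1\in S_1\cap S_2\cap S_3$. Now fix $r\in R$ with $r\notin\{0,1\}$ and set $A=\{0,1,r,1+r\}$. Because $r+r=1+1=0$ and $r\neq 0,1$, the four listed elements are pairwise distinct (e.g. $1+r=r$ would force $1=0$, and $1+r=0$ would force $r=1$), so $A$ is precisely the additive subgroup of $(R,+)$ generated by $1$ and $r$, and $|A|=4$.

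Next, since $R=S_1\cup S_2\cup S_3$, there is an index $i$ with $r\in S_i$. By Scorza's theorem $(S_i,+)$ has index $2$ in $(R,+)$, so $|S_i|=4$; and $1\in S_i$ by the previous paragraph. Hence $S_i$ contains the additive subgroup generated by $1$ and $r$, i.e. $A\subseteq S_i$, and comparing orders forces $A=S_i$. Since $S_i$ is a subring of $R$, so is $A$, which is exactly the assertion.

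I do not expect any genuine obstacle here: the only things to check are that $A$ really has order $4$ and that $|S_i|=4$, both immediate from $2R=0$ and Scorza's theorem, after which multiplicative closure of $A$ is free because $A$ coincides with an honest subring taken from the covering. (Should one wish to avoid invoking the covering, the same conclusion follows by hand: every product of elements of $\{0,1,r,1+r\}$ reduces, using $1+1=0$, to a polynomial in $r$ of degree at most $2$, so the sole nontrivial point is $r^2\in A$ — which is precisely what membership of $r$ in the order-$4$ subring $S_i\ni 1$ delivers.)
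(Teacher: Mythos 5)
Your proof is correct and follows essentially the same route as the paper's: locate $r$ in one of the three covering subrings $S_i$, note that $|S_i|=4$ and $1\in S_i$ by Proposition \ref{p1} (the $|R|=4$ alternative being excluded since $|R|=8$), and conclude $S_i=\{0,1,r,1+r\}$. The extra details you supply (that $2R=0$ makes the four elements distinct, and the order comparison) are exactly what the paper leaves implicit.
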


\begin{proof}
Let $r$ be an arbitrary element of $R$ different from $0$ or $1$.
Since $R$ is a good ring, there exists a subring $S_{1}$ of order
$4$ containing $r$. By Proposition \ref{p1}, we know that $1$ is
also contained in $S_{1}$. Hence $S_{1} = \{ 0,1,r,1+r \}$.
\end{proof}

\begin{lemma}
\label{l3} Let $R$ be a good ring of order $8$ with a
multiplicative identity. Then $u^{2}=0$ for every element $u$ of
$J(R)$.
\end{lemma}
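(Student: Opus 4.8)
The plan is to combine Lemma~\ref{l2}, which describes the exact shape of the order-$4$ subring through any non-identity element, with the elementary structure theory of the Jacobson radical of a finite ring.

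First I would collect the standard facts. Since $R$ is finite it is Artinian, so $J := J(R)$ is a nilpotent two-sided ideal of $R$; in particular $J$ is nil, hence contains no non-zero idempotent, and $1 \notin J$ because $R$ has a multiplicative identity. Now fix $u \in J$; we may assume $u \neq 0$, and then $u \neq 1$ since $1 \notin J$. By Lemma~\ref{l2} the set $\{0,1,u,1+u\}$ is a subring of $R$, hence is closed under multiplication, so $u^{2} \in \{0,1,u,1+u\}$. On the other hand $u^{2} \in J$ because $J$ is an ideal, while $1 \notin J$ and $1+u \notin J$ (otherwise $1 = (1+u) - u \in J$). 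Therefore $u^{2} \in \{0,1,u,1+u\} \cap J = \{0,u\}$.

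Finally, if $u^{2} = u$ then $u$ is a non-zero idempotent lying in the nil ideal $J$, which is impossible; hence $u^{2} = 0$. Since $u \in J$ was arbitrary, this proves the lemma. I do not anticipate any genuine obstacle: the only points that need care are the membership claims $1 \notin J$ and $1+u \notin J$ and the nilpotence of $J$, all of which are routine. (Alternatively one could separate the cases $|J| = 2$ and $|J| = 4$ and examine by hand the two-dimensional nilpotent $\mathbb{Z}/2\mathbb{Z}$-algebra structure on $J$, using Lemma~\ref{l2} to rule out the single offending algebra in which some element squares to a nonzero element of $J$, but the argument above is shorter and uniform.)
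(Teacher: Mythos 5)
Your proof is correct and follows essentially the same route as the paper: both arguments pivot on Lemma~\ref{l2} to conclude $u^{2}\in\{0,1,u,1+u\}$ and then eliminate the three non-zero possibilities using standard properties of $J(R)$. The only difference is in the elimination step, where the paper checks each case against the invertibility of $1+zu$ for $u\in J(R)$, while you use the (equally standard) facts that $J(R)$ is a nil ideal of the finite ring $R$ and hence absorbs $u^{2}$, misses $1$ and $1+u$, and contains no non-zero idempotent; both are valid and of comparable length.
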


\begin{proof}
We may assume that $u \not= 0$, $1$. Then, by Lemma \ref{l2}, the
elements $0$, $1$, $u$, and $1+u$ form a subring of $R$. Hence
$u^{2}$ is either $0$, $1$, $u$, or $1+u$.

Note that since $u$ is in $J(R)$ the elements $1+zu$ and $1+uz$
are invertible in $R$ for every element $z$ of $R$.

Suppose that $u^{2} = 1$. Then ${(1+u)}^{2}=1+u^{2} = 0$
contradicting the fact that $1+u$ is invertible. Suppose that
$u^{2} = 1+u$. Then $1+u^{2} = u$ is invertible which would mean
that $J(R) = R$, a contradiction. Suppose that $u^{2} = u$. Then
$(1+u)u = u + u^{2} = 0$ contradicting the fact that $1+u$ is
invertible.
\end{proof}

We are now in the position to show Proposition \ref{p2}.

\begin{prop}
\label{p2} Let $(R,S_{1},S_{2},S_{3})$ be a good $4$-tuple of
rings. Suppose that $R$ has a multiplicative identity and that
$|R|=8$. Suppose that $|J(R)|=2$. Then $(R,S_{1},S_{2},S_{3})$ is
of Example 2.7.
\end{prop}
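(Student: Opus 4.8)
The plan is to first determine $R$ up to isomorphism and only then pin down the triple $(S_1,S_2,S_3)$. Write $J=J(R)$. By Lemma \ref{l3} we have $J=\{0,u\}$ with $u^2=0$, and recall from Section 3 that $2R=0$, so $(R,+)$ is elementary abelian of order $8$. The quotient $R/J$ is a semisimple ring of order $4$, hence by the Artin--Wedderburn theorem it is isomorphic either to $GF(4)$ or to $GF(2)\oplus GF(2)$.

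First I would rule out $R/J\cong GF(4)$. Choose $a\in R$ mapping to a generator of the multiplicative group of $R/J$; then $a\notin\{0,1,u,1+u\}$, so by Lemma \ref{l2} the set $\{0,1,a,1+a\}$ is a subring and $a^2\in\{0,1,a,1+a\}$. Since $\bar a^{\,2}=\bar a+1$ in $GF(4)$ we have $a^2\equiv a+1\pmod J$, and as $a+1+u\notin\{0,1,a,1+a\}$ this forces $a^2=a+1$, whence $a^3=a(a+1)=1$. Thus $F:=\{0,1,a,1+a\}$ is a subring in which every nonzero element is a unit, i.e. a field isomorphic to $GF(4)$, and since $u$ is not a unit we have $F\cap J=\{0\}$ and therefore $(R,+)=F\oplus J$. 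But then $J$ is a nonzero left module over $F$ on which $1_R=1_F$ acts as the identity, i.e. an $F$-vector space, contradicting $|J|=2$. Hence $R/J\cong GF(2)\oplus GF(2)$.

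Next, pick $e_1\in R$ lifting one of the two primitive idempotents of $R/J$; then $e_1\notin\{0,1,u,1+u\}$ and $e_1^2\equiv e_1\pmod J$, and again Lemma \ref{l2} together with $e_1+u\notin\{0,1,e_1,1+e_1\}$ forces $e_1^2=e_1$. Setting $e_2:=1+e_1$ gives orthogonal idempotents with $e_1+e_2=1$ and $e_1,e_2\notin\{0,1\}$. I would then use the Peirce decomposition $R=\bigoplus_{i,j}e_iRe_j$ and the induced one on $J$. Because $\bar e_1,\bar e_2$ are the primitive idempotents of $GF(2)\oplus GF(2)$, the off-diagonal pieces $e_1Re_2,e_2Re_1$ lie in $J$ and in fact equal $e_1Je_2,e_2Je_1$, while $e_iRe_i/e_iJe_i\cong GF(2)$ for $i=1,2$; since $|J|=2$, exactly one of the four pieces $e_iJe_j$ has order $2$ and the other three vanish. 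If this piece is $e_1Je_1$ (the case $e_2Je_2$ being symmetric), then $e_1Re_2=e_2Re_1=0$, so $R\cong e_1Re_1\times e_2Re_2$ with $|e_1Re_1|=4$ and $|e_2Re_2|=2$; but then $u+e_2$, where $\{0,u\}=e_1Je_1$, is different from $0$ and $1$ yet satisfies $(u+e_2)^2=e_2\notin\{0,1,u+e_2,1+u+e_2\}$, contradicting Lemma \ref{l2}. Hence the order-$2$ piece is $e_1Je_2$ or $e_2Je_1$. In that case $e_1Re_1\cong e_2Re_2\cong GF(2)$, one off-diagonal piece equals $\{0,u\}$ and the other is $0$, and reading off the multiplication table (the $e_i$ the diagonal idempotents, $u$ the off-diagonal generator, $u^2=0$) shows $R$ is isomorphic to the ring of $2\times2$ upper triangular matrices over $\mathbb{Z}/2\mathbb{Z}$; the choice $e_2Je_1$ gives the lower triangular matrices, i.e. the opposite ring, which is isomorphic to it. Thus $R$ is the ring of Example 2.7.

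Finally I would identify the subrings. By Scorza's theorem each $S_i$ has index $2$, hence order $4$; by Proposition \ref{p1} (as $|R|=8\neq4$) each $S_i$ contains $1$; and $S_1,S_2,S_3$ are pairwise distinct, since otherwise $R$ would be the union of two proper subrings. As the ring of Example 2.7 has exactly three subrings of order $4$ containing its identity, $\{S_1,S_2,S_3\}$ must be precisely that set, so $(R,S_1,S_2,S_3)$ is of Example 2.7. The step I expect to be the main obstacle is the bookkeeping in the $GF(2)\oplus GF(2)$ case: verifying that the lifted idempotent is genuinely idempotent, that the off-diagonal Peirce pieces sit inside $J$, and above all that the two ``diagonal-radical'' configurations are incompatible with $R$ being good; by contrast the $GF(4)$ case and the final identification of the $S_i$ are short.
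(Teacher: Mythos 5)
Your proof is correct, but it reaches the ring of Example 2.7 by a genuinely different route than the paper. The paper never passes to $R/J(R)$: it fixes an additive basis $1,x,y$ with $J(R)=\{0,y\}$, uses Lemma \ref{l3} to get $y^2=0$, and then determines every product directly --- Lemma \ref{l2} gives $x^2=a+bx$, the ideal property of $J(R)$ gives $xy=cy$ and $yx=dy$, and applying Lemma \ref{l2} to $x+y$ forces $b=c+d$; the case $b=0$ is excluded because it would make $x$ lie in $J(R)$, and the case $b=1$ yields $x^2=x$ and the two opposite (but isomorphic) triangular multiplication tables. You instead apply Artin--Wedderburn to the order-$4$ semisimple quotient $R/J(R)$, rule out $GF(4)$ by producing a copy of $GF(4)$ inside $R$ that would force $|J(R)|$ to be a power of $4$, and in the $GF(2)\oplus GF(2)$ case lift an idempotent and run the Peirce decomposition, discarding the two ``diagonal-radical'' configurations via Lemma \ref{l2}. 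Your use of Lemma \ref{l2} to lift idempotents and squares \emph{exactly} (not merely modulo $J(R)$) is a nice substitute for general idempotent-lifting, and the Peirce picture explains structurally why the answer must be the $2\times 2$ upper triangular ring; the paper's computation is shorter and avoids any appeal to Wedderburn theory for this proposition. The concluding identification of $S_1,S_2,S_3$ (order $4$ by Scorza, containing $1$ by Proposition \ref{p1}, pairwise distinct, hence equal to the unique such triple) is the same in both arguments.
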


\begin{proof}
We may assume that $R$ consists of the $8$ elements $0$, $1$, $x$,
$1+x$, $y$, $1+y$, $x+y$, $1+x+y$. Without loss of generality,
assume that $J(R) = \{ 0,y \}$. Then $y^{2}=0$ by Lemma \ref{l3}.
By Lemma \ref{l2}, we know that $x^{2} = a + bx$ for some $a$, $b
\in \{ 0,1 \}$. Similarly, since $y \in J(R)$ and $J(R)$ is an
ideal of $R$, we have $xy = cy$ and $yx = dy$ for some $c$, $d \in
\{ 0,1 \}$. Now, again by Lemma \ref{l2}, we have ${(x+y)}^{2} =
x^{2} + y^{2} + xy + yx = a + bx + (c+d)y \in \{ 0, 1, x+y, 1+x+y
\}$. Hence $b = c+d$.

Suppose for a contradiction that $b=0$. Then $x^{2}=a$. Without
loss of generality, we may assume that $a=0$, for otherwise
${(x+1)}^{2} = 0$ and hence we could replace $x$ by $1+x$. Since
$c+d = b = 0$, the ring $R$ is commutative. Hence for any $r \in
R$ we have ${(1+rx)}^{2} = 1 + {(rx)}^{2} = 1$. This means that
$1+rx$ is invertible and so $x \in J(R)$. This is a contradiction.

We conclude that $b=1$. There are hence two possibilities for $c$
and $d$. From these two possibilities we get that in $R$ we either
have $xy = y$ and $yx = 0$, or $xy = 0$ and $yx = y$. In either
case it can be shown that $x^{2}=x$. Since the two arguments in
the two cases are similar, we only give the proof in the first
case. From  $x^{2} = a+x$ we see that $0 = (yx)x = y{x}^{2} = y
(a+x) = ay + yx = ay$ from which we conclude that $a=0$.

There are hence two possibilities for the good ring $R$ of order
$8$. These two possibilities give rise to opposite rings.

Let us consider the first possibility for $R$. In this case $R$ is
defined by the relations $y^{2}=0$, $xy = y$, $yx = 0$, and
$x^{2}=x$. Identifying $x$ with the matrix $\left(
\begin{matrix}
1  & 0 \\
0  & 0
\end{matrix} \right)$
and $y$ with the matrix $\left(
\begin{matrix}
0  & 1 \\
0  & 0
\end{matrix} \right)$,
we see that $R$ is isomorphic to the ring of upper triangular
matrices in $M_{2}(\mathbb{Z}/2\mathbb{Z})$. The opposite ring of
$R$ is isomorphic to the ring of lower triangular matrices in
$M_{2}(\mathbb{Z}/2\mathbb{Z})$ which in fact is isomorphic to
$R$.

By Proposition \ref{p1}, we know that $1 \in S_{i}$ for all $i$
with $i \in \{ 1,2,3 \}$. We also know that the $S_{i}$'s must
have order $4$. Hence there is essentially one possibility for the
$S_{i}$'s. This proves that a good $4$-tuple
$(R,S_{1},S_{2},S_{3})$ exists and it is of Example 2.7.
\end{proof}

\begin{prop}
\label{p5} Let $(R,S_{1},S_{2},S_{3})$ be a good $4$-tuple of
rings. Suppose that $R$ has a multiplicative identity and that
$|R|=8$. Suppose that $|J(R)| = 4$. Then $(R,S_{1},S_{2},S_{3})$
is of Example 2.6.
\end{prop}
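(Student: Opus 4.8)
The plan is to use the hypothesis $|J(R)| = 4$ to pin down the ring structure completely and then read off the subrings. Write $J = J(R)$. Since $2R = 0$ and $|R| = 8$, the additive group $(R,+)$ is a $3$-dimensional vector space over $\GF(2)$, and $J$ is a $2$-dimensional subspace. As $R/J$ is semisimple of order $8/4 = 2$, we have $R/J \cong \GF(2)$; in particular $1 \notin J$, so $R = \langle 1 \rangle \oplus J$ as $\GF(2)$-spaces with $\dim_{\GF(2)} J = 2$. Fix a basis $u, v$ of $J$, so that $J = \{0, u, v, u+v\}$.

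First I would establish that $R$ is commutative with $J^2 = 0$. By Lemma \ref{l3} every element of $J$ squares to $0$; applying this to $u$, $v$ and $u+v$ gives $u^2 = v^2 = 0$ and $0 = (u+v)^2 = uv + vu$, whence $uv = vu$ in characteristic $2$. Since $1$ is central and $R = \langle 1\rangle \oplus J$, the ring $R$ is commutative. Now $J^2$ is spanned by $u^2$, $v^2$, $uv$, hence $J^2 = \langle uv\rangle$ has dimension at most $1$. Assuming $uv \neq 0$, write $uv = \alpha u + \beta v$ with $\alpha, \beta \in \GF(2)$; multiplying by $u$ gives $0 = u^2 v = \beta\, uv$ and multiplying by $v$ gives $0 = u v^2 = \alpha\, uv$, forcing $\alpha = \beta = 0$ and contradicting $uv \neq 0$. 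Thus $J^2 = 0$. Consequently $R$ is the commutative local ring $\GF(2)\cdot 1 \oplus J$ with $J$ a square-zero ideal of dimension $2$; sending $1$ to the identity matrix and $u$, $v$ to the matrices with $b=1$ (respectively $c=1$) and all other free entries zero identifies $R$ with the ring of Example 2.6.

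Next I would identify the subrings. By Proposition \ref{p1} each $S_i$ contains $1$ and has order $4$, so $L_i := S_i \cap J$ is a line in $J$ and $S_i = \langle 1 \rangle \oplus L_i$; conversely, because $J^2 = 0$, every line $L \subseteq J$ yields a subring $\langle 1\rangle \oplus L$ of order $4$ containing $1$. Since a $2$-dimensional $\GF(2)$-space has exactly three lines, there are exactly three subrings of order $4$ containing $1$. As no ring is the union of two proper subrings, the $S_i$ must be pairwise distinct, hence equal to these three subrings; counting by inclusion–exclusion (pairwise and triple intersections all equal $\langle 1\rangle$) confirms $S_1 \cup S_2 \cup S_3 = R$. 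Matching the three lines $\langle u\rangle$, $\langle v\rangle$, $\langle u+v\rangle$ with the restrictions $b=0$, $c=0$, $b+c=0$ then shows $(R, S_1, S_2, S_3)$ is of Example 2.6.

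The only real obstacle is the step $J^2 = 0$: commutativity and the square-zero property of individual radical elements (Lemma \ref{l3}) must be combined to rule out the possibility that the symmetric product $uv$ is a nonzero element of $J$. Once this is settled the ring is determined up to isomorphism and the subring bookkeeping is routine; I would take care only to note that $S = S_1\cap S_2\cap S_3 = \langle 1\rangle$ has order $2$, so that the goodness condition (no nonzero ideal inside $S$) is automatic, since $\langle 1\rangle$ is not an ideal of $R$.
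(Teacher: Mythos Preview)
Your proof is correct and follows essentially the same route as the paper's: both use Lemma~\ref{l3} to get $u^{2}=v^{2}=(u+v)^{2}=0$, deduce $uv=vu$, then kill the product $uv$ by writing it in the basis of $J$ and multiplying on each side by $u$ and $v$; the identification with Example~2.6 and the bookkeeping for the $S_{i}$'s via Proposition~\ref{p1} are likewise the same. Your write-up is a bit more explicit about the $\GF(2)$-linear-algebra framing and about verifying goodness at the end, but there is no substantive difference in the argument.
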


\begin{proof}
As before, we may assume that $R$ consists of the $8$ elements
$0$, $1$, $x$, $1+x$, $y$, $1+y$, $x+y$, $1+x+y$. Without loss of
generality, assume that $J(R) = \{ 0,x,y,x+y \}$.

By Lemma \ref{l3}, we have that $x^{2} = y^{2} = {(x+y)}^{2} = 0$.
Hence $0 = {(x+y)}^{2} = xy + yx$ implies $xy = yx$. Now $xy = ax
+ by$ for some $a$, $b \in \{ 0,1 \}$ since $J(R)$ is an ideal.
Hence $0 = x^{2}y = x(ax+by) = bxy$ and $0 = xy^{2} = (ax+by)y =
axy$. Thus $a = b = 0$ and so $xy = yx = 0$.

Such a ring $R$ exists. By identifying $x$ with the matrix $\left(
\begin{matrix}
0  & 0 & 0 \\
1  & 0 & 0 \\
0  & 0 & 0
\end{matrix} \right)$ and $y$ with the matrix $\left(
\begin{matrix}
0  & 0 & 0 \\
0  & 0 & 0 \\
1  & 0 & 0
\end{matrix} \right)$ we see that $R$ is isomorphic to the ring
$R$ of Example 2.6.

By Proposition \ref{p1}, we know that $1 \in S_{i}$ for all $i$
with $i \in \{ 1,2,3 \}$. We also know that the $S_{i}$'s must
have order $4$. Hence there is essentially one possibility for the
$S_{i}$'s. This proves that a good $4$-tuple
$(R,S_{1},S_{2},S_{3})$ exists and it is of Example 2.6.
\end{proof}

\begin{prop}
\label{p4} Let $(R,S_{1},S_{2},S_{3})$ be a good $4$-tuple of
rings. Suppose that $|R|=16$. Then $(R,S_{1},S_{2},S_{3})$ is of
Example 2.10.
\end{prop}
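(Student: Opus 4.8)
The plan is to combine the structural facts gathered in Section~3 with a Peirce decomposition. First, $R$ has a multiplicative identity: otherwise Proposition~\ref{p3} would produce a good $4$-tuple $(R^{*},S_{1}^{*},S_{2}^{*},S_{3}^{*})$ with $|R^{*}| = 32$, contradicting the result of Section~3 that a good ring has order $4$, $8$, or $16$. As $|R| = 16 \neq 4$, Proposition~\ref{p1} then gives $1 \in S := S_{1}\cap S_{2}\cap S_{3}$, and by Section~3 we have $2R = 0$ and $(R,+) = S\oplus\{0,x,y,x+y\}$ with the $(S_{i},+)$ as there; so $S$ is a unital subring of order~$4$. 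Moreover $T = \{0\}$, so the subgroups $S_{R} = \{s\in S : sx\in S\}$ and $S_{L} = \{s\in S : xs\in S\}$ of $(S,+)$ are distinct, each of order exactly $2$, and neither contains $1$ (since $1\cdot x = x\notin S$).

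The next step is to identify $S$. A unital ring of order~$4$ and characteristic~$2$ is isomorphic to $\GF(4)$, to $\GF(2)\oplus\GF(2)$, or to $\GF(2)[t]/(t^{2})$. If $S\cong\GF(4)$, every nonzero element of $S_{R}$ is a unit of $R$, so $sx\in S$ with $s$ invertible gives $x = s^{-1}(sx)\in S$, which is false. If $S\cong\GF(2)[t]/(t^{2})$, the only order-$2$ additive subgroup other than $\{0\}$ whose nonzero element is a non-unit is $\{0,t\}$, and the same unit argument forces $S_{R} = S_{L} = \{0,t\}$, contradicting $S_{R}\neq S_{L}$. Hence $S\cong\GF(2)\oplus\GF(2)$. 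Let $e_{1}$ be the nonzero element of $S_{R}$ and $e_{2}$ that of $S_{L}$; then $e_{1},e_{2}$ are orthogonal idempotents with $e_{1}+e_{2} = 1$ and $S = \GF(2)e_{1}\oplus\GF(2)e_{2}$.

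Now I would run the Peirce decomposition $R = e_{1}Re_{1}\oplus e_{1}Re_{2}\oplus e_{2}Re_{1}\oplus e_{2}Re_{2}$. Since $S\subseteq e_{1}Re_{1}\oplus e_{2}Re_{2}$ and $e_{1}x\in S$ (as $e_{1}\in S_{R}$) while $e_{1}x = e_{1}xe_{1}+e_{1}xe_{2}$, comparing Peirce components forces $e_{1}xe_{2} = 0$ and $e_{1}xe_{1}\in\{0,e_{1}\}$; similarly $xe_{2}\in S$ (as $e_{2}\in S_{L}$) forces $e_{2}xe_{2}\in\{0,e_{2}\}$, and the same holds for $y$ by Lemma~\ref{l1}. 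As $e_{1}xe_{1},e_{2}xe_{2}\in S$, replacing $x$ by $e_{2}xe_{1}$ and $y$ by $e_{2}ye_{1}$ leaves $S,S_{1},S_{2},S_{3}$ unchanged, so we may assume $x,y\in e_{2}Re_{1}$. Then $\langle x,y\rangle$ is $2$-dimensional inside $e_{2}Re_{1}$; since $\dim e_{1}Re_{1}\geq 1$, $\dim e_{2}Re_{2}\geq 1$ and $\dim R = 4$ over $\GF(2)$, this forces $e_{1}Re_{1} = \GF(2)e_{1}$, $e_{2}Re_{2} = \GF(2)e_{2}$, $e_{1}Re_{2} = 0$, and $N := e_{2}Re_{1} = \langle x,y\rangle$. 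The Peirce relations then determine the multiplication completely: $e_{i}^{2} = e_{i}$, $e_{1}e_{2} = e_{2}e_{1} = 0$, and $e_{1}n = ne_{2} = 0$, $e_{2}n = ne_{1} = n$, $nn' = 0$ for $n,n'\in N$. This ring is unique up to isomorphism, and a direct check (sending $e_{1},e_{2}$ to the relevant idempotent matrices and $N$ to the lower-left block) identifies it with the ring $R$ of Example~2.10, the $S_{i}$ being the three subrings $S\oplus W$ with $W$ a line of $N$; that the $4$-tuple is good follows since any nonzero ideal of $R$ inside $S$ would contain $e_{1}$, $e_{2}$, or $1$, whereas $Re_{1}\supseteq N$, $e_{2}R\supseteq N$, and $R\cdot 1 = R$.

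The main obstacle is the middle step: recognising that $T = \{0\}$ (equivalently $S_{R}\neq S_{L}$) excludes $\GF(4)$ and $\GF(2)[t]/(t^{2})$ and thereby furnishes the orthogonal idempotents $e_{1},e_{2}$ needed to launch the Peirce argument; after that, the Peirce bookkeeping together with the dimension count force everything.
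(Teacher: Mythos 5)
Your proof is correct, and it takes a genuinely different route from the paper's. The paper works entirely by hand: it singles out the element $a \in S$ with $ax \in S$ and $xa \notin S$, and then, through a chain of small normalizations and computations (replacing $x$ by $x+a$ or $x+1$ where needed), derives one relation at a time --- $a^{2}=a$, $xa=x$, $ax=0$, $x^{2}=0$, $xy=yx=0$, and the analogues for $y$ --- before matching the resulting presentation with Example 2.10; uniqueness of the $4$-tuple is then handled by a separate closing argument that produces an automorphism of $R$ carrying one choice of $S$ to another. You instead first pin down the isomorphism type of $S$: the classification of unital rings of order $4$ and characteristic $2$, combined with $S_{R}\neq S_{L}$ and the observation that a unit of $S$ cannot lie in $S_{R}\cup S_{L}$, excludes $\GF(4)$ and $\GF(2)[t]/(t^{2})$, handing you the orthogonal idempotents $e_{1},e_{2}$; the Peirce decomposition plus a dimension count then determines the entire multiplication table in one stroke (your $e_{1}$ is exactly the paper's $a$, and the relations agree). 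Your route is more conceptual and buys a cleaner uniqueness statement --- the decomposition $R=\GF(2)e_{1}\oplus\GF(2)e_{2}\oplus N$ with $S_{i}=S\oplus W_{i}$ for the three lines $W_{i}$ of $N$ is canonical, so the $4$-tuple is determined up to isomorphism without the paper's automorphism argument --- while the paper's computation is more elementary and self-contained. A small point in your favour: you make explicit, via Proposition \ref{p3} and the order bound from Section 3, why $R$ must have a multiplicative identity, a hypothesis the paper leaves implicit in the section heading.
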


\begin{proof}
Let $S = S_{1} \cap S_{2} \cap S_{3}$. By the beginning of Section
3, we know that $(R,+) = S \oplus \{ 0,x,y,x+y \}$ for some
elements $x$ and $y$. By Proposition \ref{p1}, $1 \in S$. Recall
the definitions of $S_{R}$, $S_{L}$, and $T = S_{R} \cap S_{L}$
from Section 3. From the proofs in Section 3 it is clear that
$|S_{R}| = |S_{L}| = 2$ since $|S|=4$. It is also clear that
$|T|=0$. From this we see that there exists a unique $a \in S$
with $ax \in S$ and $xa \not\in S$. (It is clear that $a$ is
different from $0$ and $1$ and that $S = \{ 0,1,a,1+a \}$.)

We claim that we may assume that $x^{2} \in S$. If $x^{2} \not\in
S$ then $x^{2} = s + x$ for some $s \in S$. (This follows from the
fact that $x$ and $x^{2}$ must lie inside the subring of $R$, say
$S_{1}$ of order $8$, generated (as an abelian group) by $(S,+)$
and $x$.) In this case ${(x+a)}^{2} = x^{2} + a^{2} + ax + xa = (s
+ a^{2} + ax) + x(1+a)$ where both summands are inside $S$. (The
second summand is in $S$ since $S_{L} = \{ 0,1+a \}$.) Hence there
is no harm to substitute $x$ with $x+a$.

Next we claim that $a^{2}=a$. Notice that $a^{2}x = a(ax) \in S$
hence $a^{2} \in S_{R} = \{ 0,a \}$. Write $xa$ in the form $s +
x$ for some $s \in S$. (This can be done as explained in the
previous paragraph.) Then $$xa^{2} = (xa)a = (s+x)a = sa + xa = sa
+ s + x \not\in S.$$ This implies that $a^{2} \not= 0$.

Now we claim that $xa = x+s$ with $s \in \{ 0, 1+a \}$. Indeed,
$$x+s = xa = xa^{2} = (xa)a = (x+s)a = xa + sa = x+s+sa$$ implies
$sa=0$ which in turn implies the claim.

We claim that we may assume that $xa=x$. Indeed, if $xa = x+a+1$
then $(x+1)a = x+1+a+a = x+1$. Moreover ${(x+1)}^{2} = x^{2} +1
\in S$. Hence in this case we may substitute $x$ with $x+1$.

We claim that $ax \in \{ 0,a \}$. Indeed, $a(ax) = a^{2}x = ax$
and $ax \in S = \{ 0,1,a,1+a \}$. It can be checked that $ax \not=
1$ or $1+a$.

We claim that $ax = 0$. Let us assume for a contradiction that $ax
= a$. Then $x^{2} = (xa)x = x(ax) = xa = x$. But $x^{2} \in S$ and
$x \not\in S$ is a contradiction.

It follows that $x^{2}=0$ since $x^{2} = (xa)x = x(ax) = 0$.

Analogues of the above claims can be stated and proved for $y$
instead of $x$. Hence, to summarize what we have obtained, we have
the relations $x^{2} = y^{2} = 0$, $xa = x$, $a^{2} = a$, $ax = ay
= 0$, and $ya = y$.

We claim that $xy \in S$ and $yx \in S$. We will only prove that
$xy \in S$. The argument for $yx \in S$ is similar. We start with
the observation that $y^{2}=0$ implies $(xy)y = 0$. Assume that
$xy$ has the form $s + \alpha x + \beta y$ for some $s \in S$ and
$\alpha$, $\beta$ from $\{ 0,1 \}$. By the previous observation we
have $$0 = (s + \alpha x + \beta y)y = sy + \alpha xy = sy +
\alpha s + \alpha^{2}x + \alpha \beta y$$ from which it follows
that $\alpha = 0$. We continue with the observation that $x^{2}=0$
implies $x(xy)=0$. Then $0 = x(s + \beta y) = xs + \beta (s +
\beta y) = xs + \beta s + {\beta}^{2}y$ which implies $\beta = 0$.
This proves the claim.

Finally, we claim that $xy = yx = 0$. We will only show that $xy =
0$ since the proof of the claim that $yx = 0$ is similar. By the
previous claim, we know that $xy \in \{ 0,1,a,1+a \}$. Now $x^{2}y
= x(xy) = 0$ implies that $xy \in \{ 0, 1+a \}$. But $xy = 1+a$
would mean that $0 = (xy)y = (1+a)y = y$. A contradiction.

A unique ring $R$ exists with the derived restrictions on the
multiplications. The above proof also shows that $R$ is isomorphic
to $R^{\mathrm{op}}$.

Our ring $R$ is isomorphic to the ring $R$ of Example 2.10. To see
this it is sufficient to consider the map which sends $x$, $y$,
$a$ to the respective matrices
$$\left(
\begin{matrix}
0  & 0 & 0 & 0 \\
0  & 0 & 0 & 0 \\
1  & 0 & 0 & 0 \\
0  & 0 & 0 & 0
\end{matrix} \right), \quad \left(
\begin{matrix}
0  & 0 & 0 & 0 \\
0  & 0 & 0 & 0 \\
0  & 0 & 0 & 0 \\
1  & 0 & 0 & 0
\end{matrix} \right), \quad \left(
\begin{matrix}
1  & 0 & 0 & 0 \\
1  & 0 & 0 & 0 \\
0  & 0 & 0 & 0 \\
0  & 0 & 0 & 0
\end{matrix} \right).$$

It remains to show that there exists exactly one good $4$-tuple of
rings \\ $(R,S_{1},S_{2},S_{3})$ with $R$ as above. (From Example
2.10 and also from the proof above, it is clear that there exists
at least one good $4$-tuple of rings.) Since $|R|=16$ and $S =
S_{1} \cap S_{2} \cap S_{3}$ has order $4$, a good $4$-tuple of
rings $(R,S_{1},S_{2},S_{3})$ is completely determined by the
$\mathrm{Aut}(R)$-automorphism class of $S$. Hence it is
sufficient to show that if $(R,S_{1},S_{2},S_{3})$ and
$(R,S_{1}',S_{2}',S_{3}')$ are two good $4$-tuples of rings (with
$R$ as above) then there exists an automorphism $\varphi$ of $R$
such that $S_{1}^{\varphi} \cap S_{2}^{\varphi} \cap
S_{3}^{\varphi} = S_{1}' \cap S_{2}' \cap S_{3}'$. Put $S := S_{1}
\cap S_{2} \cap S_{3} = \{ 0,1,a,1+a \}$ and suppose that $S' :=
S_{1}' \cap S_{2}' \cap S_{3}' = \{ 0,1,r,1+r \}$ for some $r \in
R$. Without loss of generality, we may assume that $r = \epsilon a
+ u$ for some $\epsilon \in \{ 0,1 \}$ and some $u \in \{
0,x,y,x+y \}$. But $\epsilon$ cannot be $0$ since otherwise $\{
0,r \}$ would be an ideal of $R$ inside $S'$. Thus $r = a + u$.
But then the map sending the elements $0$, $1$, $a$, $x$, $y$ to
the elements $0$, $1$, $a+u$, $x$, $y$ respectively can naturally
be extended to an automorphism $\varphi$ of $R$ sending $S$ to
$S'$.
\end{proof}

\section{Rings with no multiplicative identity}

In the previous section we classified all good rings with a
multiplicative identity and in this section we will use this
classification to list all good rings without a multiplicative
identity. Our key tool in this project is Proposition \ref{p3}.

In the first case we cannot have a good ring $R$ without a
multiplicative identity such that $R^{*}$ is a good ring of order
$8$.

\begin{prop}
Suppose that $R'$ is the good ring $\mathbb{Z}/2\mathbb{Z} \oplus
\mathbb{Z}/2\mathbb{Z} \oplus \mathbb{Z}/2\mathbb{Z}$. Then if $R$
is a ring with $R^{*} = R'$ then $R \cong \mathbb{Z}/2\mathbb{Z}
\oplus \mathbb{Z}/2\mathbb{Z}$.
\end{prop}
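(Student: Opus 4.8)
The plan is to unwind the construction $M \mapsto M^{*}$ so as to see that $R$ must sit inside $R' = \mathbb{Z}/2\mathbb{Z} \oplus \mathbb{Z}/2\mathbb{Z} \oplus \mathbb{Z}/2\mathbb{Z}$ as a two-sided ideal of order $4$, and then to read off the isomorphism type of such an ideal from the semisimple structure of $R'$.

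First I would record the elementary identity governing the operation $M \mapsto M^{*}$ in general: writing a typical element of $M^{*}$ as $m + \epsilon u$ with $m \in M$ and $\epsilon \in \{0,1\}$, and using that $u$ is the identity of $M^{*}$ with $u+u=0$, one gets
$$(m_{1} + \epsilon_{1} u)(m_{2} + \epsilon_{2} u) = m_{1}m_{2} + \epsilon_{2} m_{1} + \epsilon_{1} m_{2} + \epsilon_{1}\epsilon_{2} u ,$$
so the $u$-component of a product is the product of the $u$-components of the factors. In particular, if at least one factor lies in $M$ then so does the product; hence $M$ is a two-sided ideal of $M^{*}$, with $|M^{*}:M| = 2$ and $u \notin M$. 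Taking $M = R$ and $M^{*} = R'$, I conclude that $R$ is an ideal of $R'$ with $|R| = |R'|/2 = 4$.

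It then remains to classify the ideals of $R' = \mathbb{Z}/2\mathbb{Z} \oplus \mathbb{Z}/2\mathbb{Z} \oplus \mathbb{Z}/2\mathbb{Z}$. Writing $e_{1}, e_{2}, e_{3}$ for the three primitive idempotents of $R'$, any ideal $I$ decomposes as $I = e_{1}I \oplus e_{2}I \oplus e_{3}I$, with each $e_{i}I$ equal to $\{0\}$ or to $e_{i}R' \cong \mathbb{Z}/2\mathbb{Z}$; thus $I$ is the direct sum of the $e_{i}R'$ over some subset $T \subseteq \{1,2,3\}$, and $|I| = 2^{|T|}$. Since $|R| = 4$ this forces $|T| = 2$, and therefore $R \cong \mathbb{Z}/2\mathbb{Z} \oplus \mathbb{Z}/2\mathbb{Z}$, as claimed. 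I do not anticipate a genuine obstacle here; the one point that deserves care is the assertion in the second paragraph that $R$ embeds in $R'$ as an \emph{ideal} (not merely a subring), since this is exactly what forces $R$ to be a coordinate summand of $\GF(2)^{3}$. Alternatively one can bypass idempotents entirely and check by hand that an additive subgroup of index $2$ of $\GF(2)^{3}$ closed under the coordinatewise product must be one of the three coordinate planes.
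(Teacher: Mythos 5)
Your argument is correct, but it takes a genuinely different route from the paper's. The paper works entirely by hand: since $(R,+)$ is an index-$2$ complement of $\langle (1,1,1)\rangle$, it must contain exactly one element from each pair $\{v, v+(1,1,1)\}$, and multiplicative plus additive closure then force $R$ to be a coordinate plane. You instead isolate the general structural fact that for any $M$, the subring $M$ is a two-sided \emph{ideal} of $M^{*}$ (via the product formula $(m_{1}+\epsilon_{1}u)(m_{2}+\epsilon_{2}u)=m_{1}m_{2}+\epsilon_{2}m_{1}+\epsilon_{1}m_{2}+\epsilon_{1}\epsilon_{2}u$), and then classify the order-$4$ ideals of $\GF(2)^{3}$ by the orthogonal idempotent decomposition. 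Your lemma is reusable -- it would equally streamline the other ``remove the identity'' propositions of Section 5 and is implicit in the paper's Proposition \ref{p3} -- whereas the paper's enumeration is more elementary and self-contained. One caveat: your closing aside, that an index-$2$ additive subgroup of $\GF(2)^{3}$ closed under the coordinatewise product must be a coordinate plane, is not literally true; the subring $\{(a,a,b)\}$ is a counterexample. It fails to be a complement of $\langle (1,1,1)\rangle$ only because it contains the identity, so the bypass works only if you also invoke $1\notin R$. This does not affect your main (ideal-theoretic) argument, which is complete as written.
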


\begin{proof}
The ring $R$ must contain exactly one element of each of the
following sets: $\{ (1,0,0), (0,1,1) \}$, $\{ (0,1,0), (1,0,1)
\}$, $\{ (0,0,1), (1,1,0) \}$. The ring $R$ can only contain one
vector with two $1$'s. Moreover it must contain exactly one such
vector $v$. Without loss of generality $v$ contains a $0$ in the
first entry. Hence the ring $R$ will be the ring consisting of all
vectors with a $0$ in the first entry.
\end{proof}

In the next case we find two good rings.

\begin{prop}
Let $R'$ be the subring of $M_{2}(\mathbb{Z}/2\mathbb{Z})$
consisting of all upper triangular matrices. There exists two
non-commutative rings $R_{1}$ and $R_{2}$ of order $4$ without a
multiplicative identity such that $R_{1}^{*} = R_{2}^{*} = R'$.
These are of Examples 2.3 and 2.4.
\end{prop}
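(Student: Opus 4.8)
The plan is to reinterpret the condition $R^{*}=R'$ as a statement about two-sided ideals of $R'$ and then to enumerate those ideals using the very small structure of $R'$. Write $e_{ij}$ for the matrix units in $M_{2}(\mathbb{Z}/2\mathbb{Z})$, so that $R'$ consists of the eight elements $0,e_{11},e_{22},e_{11}+e_{22},e_{12},e_{11}+e_{12},e_{22}+e_{12},e_{11}+e_{22}+e_{12}$ and $1_{R'}=e_{11}+e_{22}$.

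The first step is to show that the rings $R$ with $R^{*}\cong R'$ are, up to isomorphism, exactly the proper two-sided ideals of $R'$ of index~$2$; in particular every such $R$ has order $|R'|/2=4$. For one direction, let $I$ be a proper two-sided ideal of $R'$ of index~$2$. An ideal containing a unit is the whole ring, so $1_{R'}\notin I$ and hence $R'=I\oplus\langle 1_{R'}\rangle$ as abelian groups; since $1_{R'}$ is the identity of $R'$, the product in $R'$ of $a+\varepsilon 1_{R'}$ and $b+\delta 1_{R'}$ (for $a,b\in I$, $\varepsilon,\delta\in\{0,1\}$) equals $ab+\delta a+\varepsilon b+\varepsilon\delta 1_{R'}$, which is precisely the product prescribed by the $*$-construction. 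Thus $I^{*}=R'$ under the identification $u\leftrightarrow 1_{R'}$. Conversely, if $\varphi\colon R^{*}\to R'$ is an isomorphism, then $R$ is a two-sided ideal of $R^{*}$ of index~$2$ (it is an ideal because $u$ is the identity of $R^{*}$, so $r(r'+\varepsilon u)=rr'+\varepsilon r\in R$ and $(r'+\varepsilon u)r=r'r+\varepsilon r\in R$ for $r,r'\in R$), whence $\varphi(R)$ is a proper two-sided ideal of $R'$ of index~$2$ isomorphic to $R$.

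The second step is to enumerate the index-$2$ two-sided ideals of $R'$. Since $R'$ has an identity, the quotient $R'/I$ by such an ideal is a ring of order~$2$ with identity, so $R'/I\cong\mathbb{Z}/2\mathbb{Z}$ and $I$ is the kernel of a ring epimorphism $R'\to\mathbb{Z}/2\mathbb{Z}$. Any such epimorphism annihilates the nilpotents and so factors through $R'/J(R')$, where $J(R')=\langle e_{12}\rangle$ is the strictly upper triangular part; and $R'/J(R')\cong\mathbb{Z}/2\mathbb{Z}\oplus\mathbb{Z}/2\mathbb{Z}$ has exactly two epimorphisms onto $\mathbb{Z}/2\mathbb{Z}$ (the two coordinate projections). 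Hence $R'$ has exactly two index-$2$ two-sided ideals: $I_{1}=\{0,e_{22},e_{12},e_{22}+e_{12}\}$ (the matrices with vanishing $(1,1)$-entry) and $I_{2}=\{0,e_{11},e_{12},e_{11}+e_{12}\}$ (the matrices with vanishing $(2,2)$-entry).

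It remains to identify $I_{1}$ with the ring of Example~2.4 and $I_{2}$ with the ring of Example~2.3 (immediate from the explicit matrix descriptions there), to note that each is non-commutative (for instance $e_{11}e_{12}=e_{12}\ne 0=e_{12}e_{11}$ in $I_{2}$) and has no multiplicative identity (in $I_{2}$ one has $e_{12}z=0$ for every $z\in I_{2}$, so no element is a right identity; symmetrically for $I_{1}$), and to recall from the end of Section~2 that $I_{1}$ and $I_{2}$ are not isomorphic. Taking $R_{1}:=I_{1}$ and $R_{2}:=I_{2}$ then gives the proposition. The only step requiring any thought is the equivalence set up in the first step — concretely, the verification that adjoining a unit to a proper index-$2$ ideal of $R'$ reproduces the multiplication of $R'$ exactly; everything after that is a routine finite check.
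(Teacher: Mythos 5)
Your proof is correct, and it reaches the classification by a genuinely different route from the paper's. The paper argues additively: a ring $R$ of order $4$ with $R^{*}=R'$ is an additive complement to $\{0,1_{R'}\}$, hence contains exactly one matrix from each pair $\{r,1_{R'}+r\}$, and the candidates are then pruned by two ad hoc observations --- the unipotent matrix $e_{11}+e_{12}+e_{22}$ squares to the identity and so is excluded, and additive closure cuts the remaining combinations down to the two sets of Examples 2.3 and 2.4. You instead exploit the multiplicative structure: $R$ is forced to be a two-sided \emph{ideal} of $R^{*}$, not merely a subring avoiding $1_{R'}$, so the problem becomes counting proper index-$2$ ideals of $R'$, which you do by noting that any ring epimorphism $R'\to\mathbb{Z}/2\mathbb{Z}$ kills the nilpotent radical $J(R')=\{0,e_{12}\}$ and hence factors through $R'/J(R')\cong\mathbb{Z}/2\mathbb{Z}\oplus\mathbb{Z}/2\mathbb{Z}$, leaving exactly the two coordinate projections. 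Your preliminary equivalence (that adjoining a unit to a proper index-$2$ ideal $I$ gives $I^{*}\cong R'$, and conversely) is verified correctly and is consistent with the $*$-construction as the paper defines it before Proposition \ref{p3}. What your approach buys is uniformity and conceptual clarity --- no element-by-element exclusion, and an argument that would transfer to any characteristic-$2$ ring with identity whose semisimple quotient is known; what the paper's approach buys is brevity on this particular eight-element ring. Both arguments land on the same two rings (vanishing $(2,2)$-entry for Example 2.3, vanishing $(1,1)$-entry for Example 2.4), and your explicit checks of non-commutativity, absence of an identity, and non-isomorphism are all accurate.
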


\begin{proof}
A good subring of $R'$ of order $4$ not containing a
multiplicative identity must contain the zero matrix and exactly
one element of each of the following sets of matrices: $$\left\{
\left(
\begin{matrix}
1  & 1 \\
0  & 1
\end{matrix} \right), \left(
\begin{matrix}
0  & 1 \\
0  & 0
\end{matrix} \right) \right\}, \left\{ \left(
\begin{matrix}
1  & 1 \\
0  & 0
\end{matrix} \right), \left(
\begin{matrix}
0  & 1 \\
0  & 1
\end{matrix} \right) \right\}, \left\{ \left(
\begin{matrix}
0  & 0 \\
0  & 1
\end{matrix} \right), \left(
\begin{matrix}
1  & 0 \\
0  & 0
\end{matrix} \right) \right\}.$$
The square of the matrix $\left(
\begin{matrix}
1  & 1 \\
0  & 1
\end{matrix} \right)$ is the identity, so this matrix cannot lie
inside our good ring without a multiplicative identity. So a
possible good ring must contain the matrix $\left(
\begin{matrix}
0  & 1 \\
0  & 0
\end{matrix} \right)$. We are hence left with two possibilities
and these lead us to Examples 2.3 and 2.4.
\end{proof}

The statement of the following proposition is a bit technical but
its proof is short.

\begin{prop}
Let $(R',S_{1}',S_{2}',S_{3}')$ be a good $4$-tuple of rings with
$R'$ a ring of order $8$ with a multiplicative identity. Suppose
also that $|J(R')|=4$. Then there exist a unique good $4$-tuples
of rings $(R,S_{1},S_{2},S_{3})$ with
$(R^{*},S_{1}^{*},S_{2}^{*},S_{3}^{*}) =
(R',S_{1}',S_{2}',S_{3}')$ (where a unique identity is added to
all four rings $R$, $S_{1}$, $S_{2}$, and $S_{3}$). This tuple is
of Example 2.2.
\end{prop}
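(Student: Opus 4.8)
The plan is to recover $R$ from $R'$ by deleting the added identity and checking that exactly one of the resulting subrings is good. By the construction of $R^{*}$ in Section~3, any ring $R$ with $R^{*} = R'$ is an additive complement of the cyclic group $\langle u \rangle \cong \mathbb{Z}/2\mathbb{Z}$ inside $(R',+)$ that happens to be closed under multiplication; equivalently, $R$ is obtained by choosing, for the identity element $1$ of $R'$, an element $r_{1}$ with $r_{1} \equiv 1 \pmod{R}$ removed — more concretely, $R$ consists of the four elements not of the form (unit-coset representative). I would start by recording, from Proposition~\ref{p5}, the explicit structure of $R'$: it has elements $0,1,x,1+x,y,1+y,x+y,1+x+y$ with $J(R') = \{0,x,y,x+y\}$, $x^{2}=y^{2}=(x+y)^{2}=0$, and $xy=yx=0$; and $S_{1}'$, $S_{2}'$, $S_{3}'$ are the three subrings of order $4$ containing $1$, namely (in the coordinates of Example~2.6) $b=0$, $c=0$, $b+c=0$.

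Next I would enumerate the additive subgroups $R$ of index $2$ in $R'$ with $u = 1 \notin R$ (so that adjoining $1$ recovers $R'$), and among those retain the ones closed under multiplication. Since $1 \notin R$, the additive group $R$ is a complement to $\langle 1 \rangle$, determined by a group homomorphism $\lambda \colon R'/\langle 1 \rangle \to \langle 1 \rangle$; equivalently $R = \{ v \in R' : \ell(v) = 0 \}$ for a suitable $\mathbb{Z}/2\mathbb{Z}$-linear functional $\ell$ with $\ell(1) = 1$. The key point is that $J(R') \subseteq R$ is forced: any such $R$ must contain $J(R')$ because $J(R')$ is spanned by nilpotents and the only way for a three-dimensional $\mathbb{Z}/2\mathbb{Z}$-space containing $1$... actually more directly, $R$ has order $4$ and must contain the zero ring $J(R')$ of order $4$ or else $R \cap J(R')$ has order $2$ and $R$ contains an element of the form $1+j$; I would check that $(1+j)^{2} = 1 + j^{2} = 1 \notin R$, a contradiction. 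Hence $R = J(R')$, which is precisely the zero ring of order $4$ of Example~2.2, and it is closed under multiplication. This pins down $R$ uniquely, and the isomorphism with Example~2.2 is immediate from $x^{2}=y^{2}=xy=yx=0$, $|R|=4$, $(R,+)\cong\mathbb{Z}/2\mathbb{Z}\oplus\mathbb{Z}/2\mathbb{Z}$.

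For the subrings: set $S_{i} := S_{i}' \cap R = S_{i}' \cap J(R')$. Each $S_{i}'$ has order $4$, contains $1$, and meets $J(R')$ in a subgroup of order $2$ (since $1 \notin J(R')$ and $|S_{i}' : S_{i}' \cap J(R')| = 2$), so each $S_{i}$ has order $2$, is a proper subring of $R$, and $S_{1}^{*} = S_{1}' $ etc. Because the three $S_{i}'$ are distinct and all contain $1$, their traces on $J(R')$ are the three distinct subgroups of order $2$, so $R = S_{1} \cup S_{2} \cup S_{3}$ and $S_{1} \cap S_{2} \cap S_{3} = \{0\}$, which contains no non-trivial ideal. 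Thus $(R,S_{1},S_{2},S_{3})$ is a good $4$-tuple, and by the uniqueness just established it is the only one with the prescribed $R^{*}$; comparing with Example~2.2 finishes the proof. The only mildly delicate step is the forcing of $R = J(R')$ — i.e. ruling out an $R$ that shares only a subgroup of order $2$ with the radical — but the nilpotence computation $(1+j)^{2}=1$ handles it cleanly, so I expect no real obstacle.
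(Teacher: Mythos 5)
Your proposal is correct and follows essentially the same route as the paper's proof: both pin down $R$ by the observation that $(1+j)^{2}=1$ for every $j \in J(R')$, so no element of the coset $1+J(R')$ can lie in $R$, forcing $R = J(R') = \{0,x,y,x+y\}$, the zero ring of Example 2.2. You merely spell out the determination of the $S_{i}$ in more detail than the paper, which treats that part as immediate once $R$ is known to be the order-$4$ zero ring.
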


\begin{proof}
Since $|R'|=8$ it is sufficient to show that there is a unique
good ring $R$ which in fact is a zero ring (since it would be of
order $4$). By Proposition \ref{p5}, we may assume that $R'$ is
generated by the elements $1$, $x$, $y$ subject to the relations
$x^{2}=y^{2}=xy=yx=0$. Since ${(1+x)}^{2} = {(1+y)}^{2} = 1$, the
elements $1+x$ and $1+y$ cannot lie in $R$. Hence $R = \{ 0, x, y,
x+y \}$. This is a zero ring.
\end{proof}

Finally, we consider the good ring of order $16$.

\begin{prop}
\label{p6} Let $(R',S_{1}',S_{2}',S_{3}')$ be a good $4$-tuple of
rings with $|R'|=16$. Then there exist two good $4$-tuples of
rings $(R,S_{1},S_{2},S_{3})$ with
$(R^{*},S_{1}^{*},S_{2}^{*},S_{3}^{*}) =
(R',S_{1}',S_{2}',S_{3}')$ (where a unique identity is added to
all four rings $R$, $S_{1}$, $S_{2}$, and $S_{3}$). One such tuple
is of Example 2.8 and the other is of Example 2.9.
\end{prop}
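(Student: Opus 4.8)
The plan is to mimic the structure of the preceding three propositions in Section~5: start from the explicit presentation of the good ring $R'$ of order $16$ established in Proposition \ref{p4}, determine which elements of $R'$ can belong to a subring $R$ with $R^{*}=R'$, and then check that the two resulting candidates are exactly the rings of Examples 2.8 and 2.9 (with the corresponding triples $S_{1},S_{2},S_{3}$). First I would recall from the proof of Proposition \ref{p4} that $R'$ is generated as a ring by the elements $1,a,x,y$ subject to $a^{2}=a$, $x^{2}=y^{2}=0$, $xy=yx=0$, $ax=ay=0$, $xa=x$, $ya=y$, and that $(R',+)=\{0,1,a,1+a\}\oplus\{0,x,y,x+y\}$ with $1+1=0$. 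A subring $R$ with $R^{*}=R'$ has index $2$ in $(R',+)$, does not contain the multiplicative identity $1$, and satisfies $R'=R\oplus\langle 1\rangle$; equivalently $R$ is the kernel of a surjective additive homomorphism $(R',+)\to\mathbb{Z}/2\mathbb{Z}$ that does not vanish on $1$.

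Next I would translate the requirement ``$1\notin R$'' together with ``$R$ closed under multiplication'' into a short case analysis over such additive hyperplanes. The element $e:=1+a$ is idempotent ($e^{2}=1+a+a+a=1+a=e$ since $a^{2}=a$), and so is $a$; more importantly one checks that $1+x$, $1+y$, $1+x+y$, $e$, $e+x$, etc.\ have squares that can be computed directly from the presentation. As in the proof for the order-$8$ case with $|J(R')|=4$, any element $r$ with $r^{2}=1$ or more generally with $r^{2}\notin R$ cannot lie in $R$; this rules out most of the eight cosets and pins down $R$ to a small number of possibilities. I expect that the surviving hyperplanes are precisely those on which both $a$ (or rather $e$) behaves correctly and the nilpotent part is preserved, namely the two subrings
$$R_{1}=\{0,a,x,y,x+y,a+x,a+y,a+x+y\}\quad\text{(say)}$$
and its ``opposite-type'' partner $R_{2}$, obtained by the asymmetry between $xa=x$ and $ax=0$; concretely these should match the matrix descriptions in Examples 2.8 and 2.9 under the identification of $a,x,y$ with the matrices displayed after the proof of Proposition \ref{p4}. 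One then verifies $R_{1}^{\mathrm{op}}\cong R_{2}$, consistently with the remark at the end of Section~2 that Example 2.9 is the opposite ring of Example 2.8, and notes that each $R_{i}$ inherits from $R'$ a unique triple $S_{1},S_{2},S_{3}$ (the intersections $S_{j}'\cap R$), so that exactly two good $4$-tuples arise.

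The main obstacle, I expect, is bookkeeping rather than conceptual: one must be careful that the additive hyperplane chosen is actually closed under the (noncommutative) multiplication of $R'$, which requires checking products like $x\cdot a$, $a\cdot x$, $(a+x)(a+y)$, and so on against membership in the proposed $R$, and then separately confirming that no element of $R$ is a unit of $R'$ (this is what forces $R^{*}=R'$ rather than $R=R'$, and it is where the idempotent $e=1+a$ and the relations $xa=x$, $ax=0$ interact). The computation that the two candidates are genuinely non-isomorphic, and that they are the rings of Examples 2.8 and 2.9 and not something new, follows the same annihilator-size argument invoked at the end of Section~2 (left and right annihilators of $R_{1}$ and $R_{2}$ have swapped sizes), so no essentially new idea is needed there.
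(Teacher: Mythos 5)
Your proposal is correct and follows essentially the same route as the paper's proof: both start from the presentation of $R'$ obtained in Proposition \ref{p4}, use $(1+x)^{2}=(1+y)^{2}=(1+x+y)^{2}=1$ to force $x,y,x+y$ into any identity-free index-$2$ subring, leaving only the binary choice between $a$ and $1+a$ and hence the two rings $\langle a,x,y\rangle$ and $\langle 1+a,x,y\rangle$, which are identified with Examples 2.8 and 2.9 via the opposite-ring and annihilator-size observations. The only place you are slightly lighter than the paper is the final step pinning down the triple $(S_{1},S_{2},S_{3})$ up to isomorphism, where the paper explicitly reuses the automorphism argument from the end of Proposition \ref{p4} (sending $a\mapsto a+u$ for $u$ in the ideal generated by $x$ and $y$); this is a minor omission, not a gap in the method.
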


\begin{proof}
We use the notations of Proposition \ref{p4}. Let $R'$ be the ring
generated by the elements $1$, $a$, $x$, and $y$ subject to the
relations $1+1=0$, $x^{2}=y^{2}= 0$, $ax = ay = xy = yx = 0$,
$xa=x$, $a^{2}=a$, and $ya = y$. We wish to construct rings
$R_{1}$ and $R_{2}$ with $R_{1}^{*} = R_{2}^{*} = R'$. To do this
we need to pick exactly one element from each set $\{ 1+r, r \}$
where $r \in R'$. Since ${(1+x)}^{2} = {(1+y)}^{2} = 1$, the
elements $x$ and $y$ must lie inside $R_{1}$ and $R_{2}$. Let
$R_{1}$ be the ring generated by the elements $a$, $x$, $y$ and
let $R_{2}$ be the ring generated by the elements $1+a$, $x$, $y$.
It is easy to see that $R_{2}$ is isomorphic to $R$ of Example
2.9. It is also clear that $R_{1}$ is the opposite ring of
$R_{2}$. Hence $R_{1}$ is isomorphic to $R$ of Example 2.8. We
noted at the end of Section 2 that the $R$'s of Examples 2.8 and
2.9 are not isomorphic. To finish the proof of the proposition it
is sufficient to show that there is a unique good $4$-tuple of
rings $(R_{1},S_{1},S_{2},S_{3})$. But this follows by the
argument given at the end of the proof of Proposition \ref{p4}. We
just note that $S = S_{1} \cap S_{2} \cap S_{3}$ must have the
form $\{ 0, a+u \}$ for some element $u$ in the ideal of $R_{1}$
generated by $x$ and $y$, and note also that the map sending $x$,
$y$, $a$ to $x$, $y$, $a+u$ respectively can be extended to an
automorphism of $R_{1}$.
\end{proof}

This proves Theorem \ref{t1}.

\section{Proof of Theorem \ref{t2}}

We break the proof of Theorem \ref{t2} up into a series of
propositions. (It is easy to see that it suffices to prove only
these propositions.)

\begin{prop}
The good ring of Example 2.5 has a factor ring isomorphic to the
good ring of Example 2.1
\end{prop}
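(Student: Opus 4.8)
The ring $R$ of Example 2.5 is $\mathbb{Z}/2\mathbb{Z}\oplus\mathbb{Z}/2\mathbb{Z}\oplus\mathbb{Z}/2\mathbb{Z}$, a commutative ring of order $8$ with identity. The plan is to exhibit an explicit ideal $I$ of $R$ such that $R/I$ is isomorphic to the good ring of Example 2.1, namely $\mathbb{Z}/2\mathbb{Z}\oplus\mathbb{Z}/2\mathbb{Z}$. Since Example 2.1 has order $4$, the ideal $I$ must have order $2$.

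\textbf{Key steps.} First I would take $I$ to be any of the three minimal ideals of $R=\mathbb{Z}/2\mathbb{Z}\oplus\mathbb{Z}/2\mathbb{Z}\oplus\mathbb{Z}/2\mathbb{Z}$; for concreteness, let $I = \{(0,0,0),(0,0,1)\}$, i.e. the set of matrices $\mathrm{diag}(0,0,c)$ with $c\in\mathbb{Z}/2\mathbb{Z}$ in the notation of Example 2.5. One checks immediately that $I$ is an ideal: it is closed under addition, and the product of $\mathrm{diag}(a,b,c)$ with $\mathrm{diag}(0,0,c')$ is $\mathrm{diag}(0,0,cc')\in I$. Second, the quotient $R/I$ consists of the cosets indexed by the first two coordinates, so the projection onto the first two coordinates induces a ring isomorphism $R/I\cong\mathbb{Z}/2\mathbb{Z}\oplus\mathbb{Z}/2\mathbb{Z}$. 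Third, by Example 2.1 (final sentence) the good ring there is precisely $\mathbb{Z}/2\mathbb{Z}\oplus\mathbb{Z}/2\mathbb{Z}$, so $R/I$ is isomorphic to the good ring of Example 2.1. This completes the argument.

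\textbf{Main obstacle.} There is essentially no obstacle here: the statement is a direct structural observation, and the only thing to verify is that the chosen subset is genuinely an ideal and that the quotient has the stated isomorphism type, both of which are routine one-line checks once the explicit ideal is written down. The role of this proposition in the proof of Theorem \ref{t2} is simply to record that Example 2.5, although of order $8$, already witnesses the ``order $4$'' case through a quotient, so that the list in Theorem \ref{t2} need not mention Example 2.5 separately.
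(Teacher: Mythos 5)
Your proof is correct and follows essentially the same route as the paper: the paper quotients by the ideal $\{\mathrm{diag}(0,0,0),\mathrm{diag}(1,0,0)\}$ while you use $\{\mathrm{diag}(0,0,0),\mathrm{diag}(0,0,1)\}$, but these choices are interchangeable by the evident coordinate-permuting automorphism of $\mathbb{Z}/2\mathbb{Z}\oplus\mathbb{Z}/2\mathbb{Z}\oplus\mathbb{Z}/2\mathbb{Z}$. The verification that the subset is an ideal and that the quotient is $\mathbb{Z}/2\mathbb{Z}\oplus\mathbb{Z}/2\mathbb{Z}$ is exactly the routine check the paper leaves implicit.
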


\begin{proof}
Let $R$ be the good ring of Example 2.5. Then the set $$\left\{
\left(
\begin{matrix}
0  & 0 & 0 \\
0  & 0 & 0 \\
0  & 0 & 0
\end{matrix} \right), \left(
\begin{matrix}
1  & 0 & 0 \\
0  & 0 & 0 \\
0  & 0 & 0
\end{matrix} \right) \right\}$$
is an ideal $I$ of $R$ such that $R/I$ is isomorphic to the good
ring of Example 2.1.
\end{proof}

\begin{prop}
The good ring of Example 2.7 has a factor ring isomorphic to the
good ring of Example 2.1.
\end{prop}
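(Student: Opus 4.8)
The plan is to exhibit an explicit ideal $I$ of the ring $R$ of Example 2.7 whose quotient $R/I$ has order $4$ and is isomorphic to $\mathbb{Z}/2\mathbb{Z} \oplus \mathbb{Z}/2\mathbb{Z}$, the good ring of Example 2.1. Recall that $R$ is the ring of upper triangular matrices over $\mathbb{Z}/2\mathbb{Z}$, i.e. matrices $\left(\begin{smallmatrix} a & c \\ 0 & b \end{smallmatrix}\right)$ with $a,b,c \in \mathbb{Z}/2\mathbb{Z}$. The natural candidate for $I$ is the set of strictly upper triangular matrices $\left(\begin{smallmatrix} 0 & c \\ 0 & 0 \end{smallmatrix}\right)$, which has order $2$; this is a two-sided ideal since multiplying such a matrix on either side by any upper triangular matrix again lands in the strictly upper triangular part (a direct $2\times 2$ computation).

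Next I would identify the quotient. The projection $R \to R/I$ kills the $(1,2)$-entry, so $R/I$ is parametrized by the diagonal pair $(a,b)$, with componentwise addition and multiplication; this is precisely $\mathbb{Z}/2\mathbb{Z} \oplus \mathbb{Z}/2\mathbb{Z}$, which is the good ring of Example 2.1. Concretely, one writes down the ideal $I = \left\{ \left(\begin{smallmatrix} 0 & 0 \\ 0 & 0 \end{smallmatrix}\right), \left(\begin{smallmatrix} 0 & 1 \\ 0 & 0 \end{smallmatrix}\right) \right\}$ and checks $R/I \cong \mathbb{Z}/2\mathbb{Z} \oplus \mathbb{Z}/2\mathbb{Z}$ directly, mirroring the proof of the previous proposition.

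There is essentially no obstacle here: the only thing to verify is that $I$ is genuinely a two-sided ideal and that the quotient is the right ring of order $4$, both of which are two- or three-line matrix computations over $\mathbb{Z}/2\mathbb{Z}$. One could also observe this more conceptually — the strictly upper triangular matrices form the Jacobson radical of $R$ (indeed $|J(R)|=2$ was the hypothesis in Proposition \ref{p2}), and $R/J(R)$ is semisimple of order $4$, hence $\cong \mathbb{Z}/2\mathbb{Z} \oplus \mathbb{Z}/2\mathbb{Z}$ by Artin--Wedderburn — but for a short self-contained proof the explicit description of $I$ is cleanest. I would present the proof in the same style as the preceding proposition, simply displaying the ideal and asserting the isomorphism of the quotient with the ring of Example 2.1.
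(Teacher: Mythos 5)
Your proof is correct and is essentially identical to the paper's: the paper exhibits the same ideal $I = \left\{ \left(\begin{smallmatrix} 0 & 0 \\ 0 & 0 \end{smallmatrix}\right), \left(\begin{smallmatrix} 0 & 1 \\ 0 & 0 \end{smallmatrix}\right) \right\}$ and notes that $R/I$ is isomorphic to the good ring of Example 2.1. Your additional observation that $I = J(R)$ and the quotient is semisimple is a pleasant conceptual remark but not needed.
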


\begin{proof}
Let $R$ be the good ring of Example 2.7. Then the set $$\left\{
\left(
\begin{matrix}
0  & 0  \\
0  & 0
\end{matrix} \right), \left(
\begin{matrix}
0  & 1  \\
0  & 0
\end{matrix} \right) \right\}$$
is an ideal $I$ of $R$ such that $R/I$ is isomorphic to the good
ring of Example 2.1.
\end{proof}

\begin{prop}
The good ring of Examples 2.8 has a factor ring isomorphic to the
good ring of Example 2.4.
\end{prop}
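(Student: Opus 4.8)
The plan is to exhibit an explicit two-element ideal $I$ of the ring $R$ of Example 2.8 and to check that $R/I$ is isomorphic to the ring of Example 2.4. First I would record the multiplication of $R$ in convenient coordinates: writing a typical element of $R$ as a triple $(c,d,e)\in(\mathbb{Z}/2\mathbb{Z})^{3}$, corresponding to the matrix displayed in Example 2.8 with $b=e$, a short matrix computation gives
$$(c_{1},d_{1},e_{1})\cdot(c_{2},d_{2},e_{2}) = (c_{1}e_{2},\ d_{1}e_{2},\ e_{1}e_{2}).$$
In particular the product of any element of $R$ on the right by an element with $e=0$ is zero.

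Next I would take $I$ to be the subset $\{0,(1,0,0)\}$ of $R$; in matrix terms $I$ consists of the zero matrix together with the matrix having a single nonzero entry, in position $(1,3)$. From the multiplication rule it is immediate that $RI=\{0\}$, while $(1,0,0)\cdot(c,d,e)=(e,0,0)\in I$, so that $IR\subseteq I$. Since $I$ is visibly closed under addition, $I$ is a two-sided ideal of $R$ of order $2$, and hence $R/I$ is a ring of order $4$.

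Finally I would identify $R/I$ with the ring of Example 2.4. Choosing the coset representatives $p=(0,1,0)$ and $q=(0,0,1)$, so that $R/I=\{\overline{0},\overline{p},\overline{q},\overline{p}+\overline{q}\}$, the displayed rule yields $\overline{p}^{\,2}=\overline{0}$, $\overline{q}^{\,2}=\overline{q}$, $\overline{p}\,\overline{q}=\overline{p}$ and $\overline{q}\,\overline{p}=\overline{0}$. Sending $\overline{p}$ to $\left(\begin{smallmatrix}0&1\\0&0\end{smallmatrix}\right)$ and $\overline{q}$ to $\left(\begin{smallmatrix}0&0\\0&1\end{smallmatrix}\right)$ then extends to a ring isomorphism of $R/I$ onto the ring of Example 2.4. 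The one point that needs care is the handedness: since the rings of Examples 2.3 and 2.4 are non-isomorphic opposite rings, one must check that the idempotent $\overline{q}$ is a right identity for $\overline{p}$ (so that $\overline{p}\,\overline{q}=\overline{p}$ and $\overline{q}\,\overline{p}=\overline{0}$, matching the products $\left(\begin{smallmatrix}0&1\\0&0\end{smallmatrix}\right)\left(\begin{smallmatrix}0&0\\0&1\end{smallmatrix}\right)=\left(\begin{smallmatrix}0&1\\0&0\end{smallmatrix}\right)$ and $\left(\begin{smallmatrix}0&0\\0&1\end{smallmatrix}\right)\left(\begin{smallmatrix}0&1\\0&0\end{smallmatrix}\right)=\left(\begin{smallmatrix}0&0\\0&0\end{smallmatrix}\right)$ in Example 2.4) rather than a left identity; the computation above confirms this, and the remaining verification is routine.
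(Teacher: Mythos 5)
Your proof is correct and follows essentially the same route as the paper: quotient the ring of Example 2.8 by a two-element ideal spanned by one of the square-zero elements and exhibit the explicit isomorphism onto Example 2.4, taking care that the idempotent acts as a right (not left) identity on the nilpotent coset. The only difference is cosmetic: you compute directly in the $4\times 4$ matrix coordinates of Example 2.8, whereas the paper works with the presentation of $R_{1}$ by generators $x,y,a$ from Proposition 5.4 and kills $\{0,y\}$; your computations (the multiplication rule, the ideal property, and the relations $\overline{p}^{2}=0$, $\overline{q}^{2}=\overline{q}$, $\overline{p}\,\overline{q}=\overline{p}$, $\overline{q}\,\overline{p}=0$) all check out.
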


\begin{proof}
The good ring of Example 2.8 is isomorphic to the good ring
$R_{1}$ introduced in the proof of Proposition \ref{p6}. The ring
$R_{1}$ is generated by the elements $x$, $y$, $a$ subject to the
relations $r+r = 0$ for all $r \in R_{1}$, $x^{2} = y^{2} = ax =
ay = xy = yx = 0$, $xa=x$, $a^{2}=a$, and $ya=y$. There is an
ideal $I = \{ 0,y \}$ in $R_{1}$. Then $x$ and $a$ are different
coset representatives in the factor ring $R_{1}/I$. The map
sending $x$ and $a$ to the matrices $$\left(
\begin{matrix}
0  & 1 \\
0  & 0
\end{matrix} \right), \quad
\left(
\begin{matrix}
0  & 0 \\
0  & 1
\end{matrix} \right)$$ respectively extends naturally to an isomorphism
between $R_{1}/I$ and the good ring of Example 2.4.
\end{proof}

\begin{prop}
The good ring of Example 2.9 has a factor ring isomorphic to the
good ring of Examples 2.3.
\end{prop}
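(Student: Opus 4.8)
The plan is to proceed in exact parallel with the proof of the previous proposition, using the fact that Example 2.9 is the opposite ring of Example 2.8 (as noted at the end of Section 2 and in the proof of Proposition \ref{p6}). First I would recall that the good ring $R$ of Example 2.9 is isomorphic to the ring $R_{2}$ constructed in the proof of Proposition \ref{p6}, namely the ring generated by the elements $x$, $y$, $a$ subject to the relations $r+r=0$ for all $r$, $x^{2}=y^{2}=xa=ya=xy=yx=0$, $ax=x$, $a^{2}=a$, and $ay=y$ (these being the "opposite" relations to those defining $R_{1}$).

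Next I would exhibit the appropriate ideal. By symmetry with the $R_{1}$ case, the natural candidate is $I=\{0,y\}$, which one checks is a two-sided ideal of $R_{2}$ (it is killed on both sides by every generator except $a$, and $ay=y\in I$, $ya=0\in I$). Then $x$ and $a$ give distinct coset representatives in the quotient $R_{2}/I$, which has order $4$.

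The final step is to identify $R_{2}/I$ with the good ring of Example 2.3. I would send $x$ and $a$ to the matrices
$$\left(\begin{matrix} 0 & 1 \\ 0 & 0 \end{matrix}\right), \quad \left(\begin{matrix} 1 & 0 \\ 0 & 0 \end{matrix}\right)$$
respectively (the transposes of the matrices used in the previous proof, reflecting the passage to the opposite ring), and verify that this respects the induced relations $\bar{x}^{2}=0$, $\bar{a}^{2}=\bar{a}$, $\bar{a}\bar{x}=\bar{x}$, $\bar{x}\bar{a}=0$, so that it extends to a ring isomorphism $R_{2}/I \cong R$ of Example 2.3. Since both rings have order $4$ and the map is a surjective ring homomorphism, it is an isomorphism. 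There is no real obstacle here: the only point requiring any care is choosing the ideal and the target matrices so that the multiplicative relations match up, and this is dictated entirely by duality with the preceding proposition.
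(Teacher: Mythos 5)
Your proof is correct and follows essentially the same route as the paper's: the same ideal $I=\{0,y\}$ and the same identification of $R_{2}/I$ with the ring of Example 2.3 via the matrices $\left(\begin{smallmatrix}0&1\\0&0\end{smallmatrix}\right)$ and $\left(\begin{smallmatrix}1&0\\0&0\end{smallmatrix}\right)$ (the paper keeps the third generator written as $1+a$ rather than renaming it $a$, but the relations you list for it are the correct ones). The only inaccuracy is your parenthetical remark that these matrices are the transposes of those used for Example 2.8 --- $\left(\begin{smallmatrix}1&0\\0&0\end{smallmatrix}\right)$ is not the transpose of $\left(\begin{smallmatrix}0&0\\0&1\end{smallmatrix}\right)$ --- but this aside is harmless since you verify the multiplicative relations directly.
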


\begin{proof}
The good ring of Example 2.9 is isomorphic to the good ring
$R_{2}$ introduced in the proof of Proposition \ref{p6}. The ring
$R_{2}$ is generated by the elements $x$, $y$, $1+a$ subject to
the relations $r+r = 0$ for all $r \in R_{2}$, $x^{2} = y^{2} =
x(1+a) = y(1+a) = xy = yx = 0$, $(1+a)x=x$, $a^{2}=a$, and
$(1+a)y=y$. There is an ideal $I = \{ 0,y \}$ in $R_{2}$. Then $x$
and $1+a$ are different coset representatives in the factor ring
$R_{2}/I$. The map sending $x$ and $1+a$ to the matrices $$\left(
\begin{matrix}
0  & 1 \\
0  & 0
\end{matrix} \right), \quad
\left(
\begin{matrix}
1  & 0 \\
0  & 0
\end{matrix} \right)$$ respectively extends naturally to an isomorphism
between $R_{2}/I$ and the good ring of Example 2.3.
\end{proof}

\begin{prop}
The good ring of Example 2.10 has a factor ring isomorphic to the
good ring of Example 2.1.
\end{prop}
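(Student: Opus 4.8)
The plan is to exhibit an explicit two-sided ideal $I$ of order $4$ in the ring $R$ of Example 2.10 and to check that the quotient $R/I$ is isomorphic to $\mathbb{Z}/2\mathbb{Z} \oplus \mathbb{Z}/2\mathbb{Z}$, which is the good ring of Example 2.1.

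Working with the matrix model of Example 2.10, take
$$I = \left\{ \left(\begin{matrix} 0&0&0&0\\0&0&0&0\\c&0&0&0\\d&0&0&0\end{matrix}\right) : c,d \in \mathbb{Z}/2\mathbb{Z}\right\},$$
the set of matrices in $R$ obtained by imposing $a = b = e = 0$. Equivalently, using the generators-and-relations description of $R$ from Proposition \ref{p4} --- $R$ is generated by $1$, $a$, $x$, $y$ with $x^{2}=y^{2}=ax=ay=xy=yx=0$, $xa=x$, $ya=y$, $a^{2}=a$, and $r+r=0$ --- one may take $I$ to be the two-sided ideal generated by $x$ and $y$, which a short computation shows is $\{0,x,y,x+y\}$.

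First I would verify that $I$ is a two-sided ideal of $R$: this is a direct matrix multiplication, checking that a general element of $R$ multiplied on either side by a general element of $I$ again lies in $I$. (In the generator language one uses that, for $r = \varepsilon_{1}\cdot 1 + \varepsilon_{2}a + \varepsilon_{3}x + \varepsilon_{4}y$, one has $rx = \varepsilon_{1}x$ and $xr = (\varepsilon_{1}+\varepsilon_{2})x$, and similarly with $y$ in place of $x$.) Since $|I|=4$ and $|R|=16$, the quotient $R/I$ has order $4$. Next I would identify $R/I$: its cosets are represented by the matrices of $R$ with $c=d=0$, and sending such a matrix to the pair $(a,e)$ is readily checked to be a ring isomorphism onto $\mathbb{Z}/2\mathbb{Z}\oplus\mathbb{Z}/2\mathbb{Z}$ --- equivalently, $R/I = \{\bar{0},\bar{1},\bar{a},\overline{1+a}\}$ with $\bar{a}^{2}=\bar{a}$ and $\bar{a}\notin\{\bar{0},\bar{1}\}$, which forces $R/I\cong\mathbb{Z}/2\mathbb{Z}\oplus\mathbb{Z}/2\mathbb{Z}$, the good ring of Example 2.1.

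There is no real obstacle here. The only point deserving slight care is confirming that $I$ is closed under multiplication on both sides, since the left and right actions differ (already $ax=0$ while $xa=x$); but this is a one-line check, and the remainder is routine bookkeeping.
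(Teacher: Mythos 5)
Your proof is correct and follows essentially the same route as the paper: the paper also quotients by the order-$4$ ideal generated by the two matrices with a single $1$ in positions $(3,1)$ and $(4,1)$ (your $x$ and $y$, i.e.\ the matrices with $a=b=e=0$), and identifies the quotient with $\mathbb{Z}/2\mathbb{Z}\oplus\mathbb{Z}/2\mathbb{Z}$. You merely spell out the routine verifications that the paper leaves implicit.
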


\begin{proof}
Let $R$ be the good ring of Example 2.10. Then the ideal $I$ (of
order $4$) of $R$ generated by the matrices $$\left(
\begin{matrix}
0  & 0 & 0 & 0 \\
0  & 0 & 0 & 0 \\
1  & 0 & 0 & 0 \\
0  & 0 & 0 & 0
\end{matrix} \right) \quad \mathrm{and} \quad \left(
\begin{matrix}
0  & 0 & 0 & 0 \\
0  & 0 & 0 & 0 \\
0  & 0 & 0 & 0 \\
1  & 0 & 0 & 0
\end{matrix} \right)$$ have the property that $R/I$ is isomorphic
to the good ring of Example 2.1.
\end{proof}

The last proposition is not needed for the proof of Theorem
\ref{t2}, however, for the sake of completeness, we include it
here.

\begin{prop}
The good ring of Example 2.6 has no good proper factor ring.
\end{prop}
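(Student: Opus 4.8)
The plan is to show directly that no proper nonzero ideal $I$ of the good ring $R$ of Example 2.6 can be a good ring's complement—more precisely, that for every proper nonzero ideal $I$ of $R$ the quotient $R/I$ is \emph{not} a good ring, equivalently that $R/I$ cannot be written as the union of three proper subrings. Recall from Example 2.6 that $R$ is generated by $1$, $x$, $y$ with $x^2=y^2=xy=yx=0$, $x$ and $y$ central, $1+1=0$, so that $J:=J(R)=\{0,x,y,x+y\}$ and $R/J\cong\mathbb{Z}/2\mathbb{Z}$. A good ring has order $4$, $8$, or $16$ by Proposition~3.9, and since $|R|=8$ and any proper nonzero factor has order $2$ or $4$, the only candidates for $R/I$ to even be good are those of order $4$, namely $R/I$ isomorphic to one of Examples 2.1, 2.2, 2.3, 2.4.

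First I would enumerate the proper nonzero ideals of $R$. Any such ideal $I$ is an additive subgroup closed under multiplication by $1$ and by $x,y$; since $x,y$ kill everything, the multiplicative constraint reduces to: $I$ is an $\mathbb{F}_2$-subspace of $R$ with $(1+J)\cdot I\subseteq I$, i.e.\ $I$ is closed under $r\mapsto r+ (\text{element of }J\text{ times }r)$. Concretely, if $i=\alpha\cdot 1+j$ with $j\in J$ and $\alpha\in\{0,1\}$, then $xi=\alpha x$, $yi=\alpha y$, so $I$ being an ideal forces: if some element of $I$ has a nonzero "$1$-component", then $x,y\in I$, hence $J\subseteq I$, hence $I=R$, contradiction. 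Therefore every proper ideal $I$ is contained in $J$. So $I\in\{\{0,x\},\{0,y\},\{0,x+y\},J\}$ (the three order-$2$ subspaces of $J$ and $J$ itself); all of these are genuinely ideals since $J$ is an ideal and multiplication is so degenerate.

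Next I would compute the four quotients. If $I=J$ then $R/I\cong\mathbb{Z}/2\mathbb{Z}$, which has order $2$ and is not good. If $I$ is one of the three order-$2$ ideals, then $|R/I|=4$ and $R/I$ is generated by the images $\bar 1,\bar x,\bar y$; it contains $\bar 1$, hence has a multiplicative identity, and one checks the image of $J$ is a nonzero nilpotent ideal of square zero, so $R/I$ is commutative with identity, of order $4$, with nontrivial Jacobson radical. By the classification of order-$4$ good rings (only Example 2.1, which is $\mathbb{Z}/2\mathbb{Z}\oplus\mathbb{Z}/2\mathbb{Z}$ and has trivial Jacobson radical, is good among unital commutative rings of order $4$), $R/I$ is not good. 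More directly: in $R/I$ the element $\bar x$ (say $I=\{0,y\}$, so $\bar x\neq 0$) satisfies $\bar x^2=0$, so $(\bar 1+\bar x)^2=\bar 1$, forcing $\bar x\in J(R/I)$; an order-$4$ ring with a nonzero Jacobson radical containing the identity cannot be a union of three proper subrings because any proper subring containing $\bar 1$ has order $2$ and equals $\{0,\bar 1\}$, which cannot cover $\bar x$ together with only two other order-$2$ subrings—indeed $R/I$ has exactly one subring of order $2$ containing $\bar 1$ but the element not in $J(R/I)\cup\{0,\bar1\}$ generates all of $R/I$.

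The main obstacle, and the step to write out carefully, is the last one: ruling out that $R/I$ (order $4$, unital, commutative, radical of order $2$) is a union of three proper subrings. The cleanest route is to observe such a ring is isomorphic to $\mathbb{F}_2[t]/(t^2)$, list its subrings—$\{0\}$, $\{0,1\}$, $\{0,t\}$, $\{0,1+t\}$... wait, $\{0,1+t\}$ is not closed under multiplication since $(1+t)^2=1$; actually its only proper subrings are $\{0,1\}$ and $\{0,t\}$ (two of them), whose union misses $1+t$. Hence $\mathbb{F}_2[t]/(t^2)$ is \emph{not} a union of its proper subrings at all, completing the proof that the good ring of Example 2.6 has no good proper factor ring.
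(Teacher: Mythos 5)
Your proof is correct and follows essentially the same route as the paper: both arguments reduce to observing that every nontrivial proper ideal lies in $J(R)=\{0,x,y,x+y\}$, so the only quotients to examine are $\mathbb{Z}/2\mathbb{Z}$ and the order-$4$ ring $\mathbb{F}_2[t]/(t^2)$, which is not a union of proper subrings. The only cosmetic difference is that you establish this last fact by enumerating the proper subrings $\{0\},\{0,1\},\{0,t\}$ and noting their union misses $1+t$, while the paper phrases it as the quotient being generated by the single element $1+t$ (so any subring containing it is improper); these are the same observation.
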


\begin{proof}
The good ring of Example 2.6 is isomorphic to the ring $R$
generated by the elements $1$, $x$, $y$ subject to the relations
$x^{2}=y^{2}=xy=yx=0$. Suppose, for a contradiction, that $I$ is a
non-trivial ideal of $R$ such that $R/I$ is a good ring. Then
$|I|=2$. Moreover, since ${(1+x)}^{2} = {(1+y)}^{2} =
{(1+x+y)}^{2} = 1$, we have $I = \{ 0,u \}$ for some element $u
\in \{ x, y, x+y \}$. Let $a$ be such that $\langle x,y \rangle =
\langle u \rangle \oplus \langle a \rangle$. Then $R/I \cong
\langle 1,a \rangle$. But the ring $\langle 1,a \rangle$ is
generated by a single element, $1+a$, so it cannot be good. A
contradiction.
\end{proof}

\section{Proof of Theorem \ref{t3}}

In this section we will prove Theorem \ref{t3}.

Let $V$ be the natural module for the ring $M_{n}(q)$ where $n
\geq 2$ and $q$ is a prime power. For a non-trivial proper
subspace $U$ of $V$ let $M(U)$ be the subring of $M_{n}(q)$
consisting of all elements of $M_{n}(q)$ which leave $U$
invariant. For a positive integer $a$ dividing $n$ the ring
$M_{n/a}(q^{a})$ can be embedded in $M_{n}(q)$ in a natural way.
Hence we consider $M_{n/a}(q^{a})$ as a subring of $M_{n}(q)$.
Note that every $GL(n,q)$-conjugate of $M_{n/a}(q^{a})$ is again a
subring of $M_{n}(q)$.

\begin{lemma}
\label{l7} Let $n \geq 2$. Then the maximal subrings of $M_{n}(q)$
are the $M(U)$'s for all non-trivial proper subspaces $U$ of $V$
and the $GL(n,q)$-conjugates of the ring $M_{n/a}(q^{a})$ where
$a$ is a prime divisor of $n$.
\end{lemma}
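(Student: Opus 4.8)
The plan is to show two things: first, that every subring listed—namely each $M(U)$ and each $\GL(n,q)$-conjugate of $M_{n/a}(q^a)$ for $a$ a prime divisor of $n$—is indeed a maximal subring; and second, that every proper subring of $M_n(q)$ is contained in one of these. For the first part, the containments $M(U) \subsetneq M_n(q)$ and $M_{n/a}(q^a) \subsetneq M_n(q)$ are clear, so I would argue maximality directly. If $M(U) \subseteq T \subsetneq M_n(q)$ with $T$ a subring, then $T$ acts on $V$ as an algebra containing all endomorphisms stabilizing $U$; since the stabilizer subring already acts irreducibly on $U$ and on $V/U$ and contains a full set of matrix units compatible with the decomposition, any element outside $M(U)$ together with $M(U)$ generates everything (one can produce a matrix unit $e_{ij}$ with $i \in U$-block, $j \notin$, forcing $T = M_n(q)$). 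For maximality of $M_{n/a}(q^a)$: it is the centralizer in $M_n(q)$ of a copy of $\GF(q^a)$ acting on $V$, i.e. of a single semisimple element $\theta$ generating that field; a subring strictly containing it, by double-centralizer considerations and the fact that $a$ is prime so $\GF(q^a)$ has no intermediate subfield over $\GF(q)$, must contain an element not commuting with $\theta$, and together with $M_{n/a}(q^a)$ (which acts irreducibly as an $\GF(q^a)$-algebra on $V$) this generates $M_n(q)$.

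The harder direction is that these exhaust the maximal subrings. I would start from an arbitrary maximal subring $T$ of $R = M_n(q)$ and analyze the action of $T$ on $V$. Either $T$ acts irreducibly on $V$ or it does not. If $T$ is reducible, it stabilizes some nontrivial proper subspace $U$, hence $T \subseteq M(U)$; by maximality $T = M(U)$. If $T$ acts irreducibly on $V$, then by a Burnside/Jacobson-density type argument applied over $\GF(q)$, the $\GF(q)$-algebra $T$ is simple, hence $T \cong M_m(Q)$ for some finite extension $Q = \GF(q^d)$ of $\GF(q)$ with $md \mid$ (a suitable relation with $n$). The embedding $T = M_m(q^d) \hookrightarrow M_n(q)$ forces $V$ to be a $T$-module, so $n = m d \cdot (\text{multiplicity})$; irreducibility of $T$ on $V$ forces the multiplicity to be $1$, giving $n = md$ and $T$ a copy of $M_{n/d}(q^d)$ with $d > 1$ (if $d = 1$ then $T = M_n(q) = R$, not proper). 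Then $T \subseteq M_{n/b}(q^b)$ for any prime $b \mid d$, and maximality forces $d$ itself to be prime and $T$ to be a $\GL(n,q)$-conjugate of $M_{n/d}(q^d)$ (all embeddings of $M_{n/d}(q^d)$ in $M_n(q)$ are $\GL(n,q)$-conjugate by Noether–Skolem / Wedderburn's theorem on conjugacy of the associated field embeddings).

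The main obstacle I anticipate is the irreducible case: pinning down that an irreducibly-acting subalgebra is exactly a conjugate of $M_{n/d}(q^d)$ requires the structure theory of finite-dimensional algebras over finite fields (Wedderburn: finite division rings are fields), the classification of simple algebras, and the conjugacy of field embeddings—care is needed to rule out the multiplicity being greater than one (which is exactly where irreducibility, as opposed to mere semisimplicity, is used) and to reduce from "$d$ divides $n$" to "we may take $d$ prime" by observing $M_{n/d}(q^d) \subseteq M_{n/b}(q^b)$ whenever $b \mid d$. The reducible case is comparatively routine once one checks that $M(U)$ is genuinely maximal, which is the content of the first paragraph.
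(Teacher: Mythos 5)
Your proposal is correct and follows essentially the same route as the paper: the reducible/irreducible dichotomy, Schur's lemma plus Wedderburn's little theorem applied to the centralizer of an irreducibly acting subring (your identification of $T$ as a simple algebra $M_m(q^d)$ acting with multiplicity one is the double-centralizer statement the paper uses), the reduction to a prime divisor via $\End_{\GF(q^d)}(V)\subseteq\End_{\GF(q^b)}(V)$ for $b\mid d$, and Skolem--Noether for conjugacy. The only difference is that you spell out the maximality verifications that the paper dismisses as easy.
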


\begin{proof}
Let $R$ be a subring of $M_{n}(q)$. If $R$ leaves a non-trivial
proper subspace $U$ of $V$ invariant, then $R \subseteq M(U)$.
Hence we may assume that $V$ is an irreducible $R$-module. Let $C$
be the centralizer of $R$ in $M_{n}(q)$. It is clear that $C$ is a
ring. By a variation of Schur's lemma we see that $C$ is a finite
division ring. Thus, by Wedderburn's theorem, $C$ is a finite
field of order $q^{r}$, say. By the double centralizer theorem, we
know that $R = \mathrm{End}_{C}(V)$ and that $R$ is a
$GL(n,q)$-conjugate of $M_{n/r}(q^{r})$. Let $a$ be a prime
divisor of $r$. Then there exists a subfield $D$ of $C$ of order
$q^{a}$. But then $R \subseteq \mathrm{End}_{D}(V)$. This proves
that the listed subrings in the statement of the lemma are the
only possibilities for maximal subrings of $M_{n}(q)$. From the
previous argument it also follows (just by considering centralizer
sizes) that the $GL(n,q)$-conjugates of the ring $M_{n/a}(q^{a})$
are indeed maximal for every prime divisor $a$ of $n$. It is also
easy to see that the subring $M(U)$ is maximal for every
non-trivial proper subspace $U$ of $V$.
\end{proof}

\begin{lemma}
\label{l4} The number of $GL(n,q)$-conjugates of the ring
$M_{n/a}(q^{a})$ is
\\ $|GL(n,q)|/|GL(n/a,q^{a}).a|$.
\end{lemma}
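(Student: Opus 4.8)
The plan is to compute the number of $\GL(n,q)$-conjugates of the subring $M_{n/a}(q^{a})$ as the index of its normalizer (stabilizer under conjugation) in $\GL(n,q)$, using the orbit-stabilizer theorem. So the first step is to set $H = M_{n/a}(q^{a}) \subseteq M_{n}(q)$ and let $N$ be the group of all $g \in \GL(n,q)$ with $g^{-1} H g = H$; then the number of conjugates equals $|\GL(n,q)|/|N|$. The whole content of the lemma is therefore the identification $|N| = |\GL(n/a,q^{a}).a|$, where the notation ``$\GL(n/a,q^{a}).a$'' denotes a group of order $a \cdot |\GL(n/a,q^{a})|$ arising as an extension of $\GL(n/a,q^{a})$ by a cyclic group of order $a$ (the field automorphisms of $\GF(q^{a})$ fixing $\GF(q)$).

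Next I would describe $N$ concretely. The ring $H \cong M_{n/a}(q^{a})$ carries with it, sitting inside, the field $F = \GF(q^{a})$ realized as the scalar matrices of $H$ (the centralizer of $H$ in $M_{n}(q)$, a field of order $q^{a}$ by the argument already used in the proof of Lemma \ref{l7}). Conjugation by $g \in N$ induces a ring automorphism of $H$; by the Skolem--Noether theorem every ring automorphism of $M_{n/a}(q^{a})$ is the composition of an inner automorphism with one induced by a field automorphism of $F/\GF(q)$. The units of $H$ form the group $\GL(n/a,q^{a})$, giving all inner automorphisms, and these units already lie in $N$; the Galois group $\mathrm{Gal}(F/\GF(q))$ is cyclic of order $a$, and one checks that each of its generators is realized by conjugation by some element of $\GL(n,q)$ (for instance by choosing an $\GF(q)$-basis of $V$ adapted to the $F$-structure and letting the Frobenius act on coordinates). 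Hence $N$ surjects onto $\mathrm{Aut}(H) \cong \GL(n/a,q^{a})/Z \rtimes' \mathrm{Gal}(F/\GF(q))$ with kernel the centralizer of $H$ in $\GL(n,q)$, namely the nonzero scalars $F^{\times}$ of order $q^{a}-1$. Assembling these pieces: $|N| = (q^{a}-1) \cdot \bigl(|\GL(n/a,q^{a})|/(q^{a}-1)\bigr) \cdot a = a \cdot |\GL(n/a,q^{a})|$, which is exactly $|\GL(n/a,q^{a}).a|$.

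Alternatively, and perhaps more transparently, one can avoid Skolem--Noether and simply observe that $N$ is precisely the normalizer in $\GL(n,q)$ of the subfield $F \subseteq M_{n}(q)$ (since $H = \End_{F}(V)$ is recovered from $F$ as its centralizer, and conversely $F$ is recovered from $H$ as \emph{its} centralizer), so $N = N_{\GL(n,q)}(F^{\times})$; this normalizer is the semilinear group $\Gamma\GL(n/a,q^{a})$ of $F$-semilinear automorphisms of $V$, which has order $a \cdot |\GL(n/a,q^{a})|$ because it is an extension of the $F$-linear group $\GL(n/a,q^{a})$ by $\mathrm{Gal}(F/\GF(q))$. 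Then the orbit--stabilizer theorem finishes it.

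The main obstacle is the surjectivity onto the Galois part: showing that every field automorphism of $F/\GF(q)$ actually arises from conjugation by a genuine element of $\GL(n,q)$ (equivalently, that the extension $\GL(n/a,q^{a}) \le N$ is \emph{nonsplit-free} of the right order, i.e. that the Galois action is realized). This is where one must exhibit a semilinear map explicitly, or invoke that any two embeddings of $F$ into $M_{n}(q)$ giving $V$ the same $\GF(q)$-dimension are conjugate. Everything else — the identification of $F$ as a centralizer, the scalars as the centralizer of $H$, and the arithmetic $|N| = a|\GL(n/a,q^{a})|$ — is routine.
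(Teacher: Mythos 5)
Your proposal is correct and follows essentially the same route as the paper: compute the normalizer $N$ of $M_{n/a}(q^{a})$ via orbit--stabilizer, note that $\GL(n/a,q^{a})$ and the Frobenius lie in $N$ so that $|GL(n/a,q^{a}).a|\leq |N|$, and bound $N/C$ by the automorphism group of $M_{n/a}(q^{a})$ using Skolem--Noether together with the identification of the centralizer $C$ as the multiplicative group of a field of order $q^{a}$. The point you flag as the main obstacle (realizing the Galois action by a genuine element of $\GL(n,q)$) is handled in the paper exactly as in your sketch, by observing that a Frobenius-semilinear map on the $\GF(q^{a})$-structure is $\GF(q)$-linear.
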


\begin{proof}
Put $X = M_{n/a}(q^{a})$. Let $N$ be the normalizer of $X$ in
$GL(n,q)$ and $C$ be the centralizer of $X$ in $GL(n,q)$. It is
clear that $GL(n/a,q^{a})$ is contained in $N$. The Frobenius
automorphism of order $a$ of the field of order $q^{a}$ is also
contained in $N$. Hence the group $GL(n/a,q^{a}).a$ is contained
in $N$. On the other hand, $N/C$ is a subgroup of the full
automorphism group of $X$, which, by a result of Skolem and
Noether (see Theorem 3.62 of Page 69 of \cite{CR}), has order
equal to $|GL(n/a,q^{a}).a|/(q^{a}-1)$. Hence $|N| =
|GL(n/a,q^{a}).a|$ and the result follows.
\end{proof}

Let $b$ be the smallest prime divisor of $n$ and let $N(b)$ be the
number of subspaces of $V$ which have dimensions not divisible by
$b$ and at most $n/2$.

\begin{prop}
\label{p7} Let $n \geq 2$. Then we have
$$\sigma(M_{n}(q)) \leq \frac{1}{b} \underset{b \nmid
i}{\underset{i=1}{\prod^{n-1}}} (q^{n}-q^{i}) + N(b).$$
\end{prop}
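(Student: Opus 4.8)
The plan is to exhibit an explicit covering of $M_{n}(q)$ by proper subrings of the two types identified in Lemma \ref{l7}, and then count how many we use. Every matrix $A \in M_{n}(q)$ either leaves some non-trivial proper subspace of $V$ invariant or acts irreducibly on $V$; in the first case $A$ lies in some $M(U)$, and in the second case the subring generated by $A$ is a field, so $A$ lies inside a $GL(n,q)$-conjugate of $M_{n/b}(q^{b})$ (any prime divisor of $n$ will do, and we take the smallest one, $b$). Thus $M_{n}(q)$ is covered by the collection of all $M(U)$'s together with all $GL(n,q)$-conjugates of $M_{n/b}(q^{b})$. This is far too many subrings, so the real content is to prune this list down to the stated size without losing coverage.

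First I would handle the field-type subrings. A matrix $A$ acting irreducibly on $V$ generates a field $\mathbb{F}_{q^{m}}$ with $m$ a multiple of the degree of each irreducible factor; in particular $A$ is contained in a conjugate of $M_{n/b}(q^{b})$ precisely when the cyclic group $\langle A \rangle$ normalizes a suitable $\mathbb{F}_{q^{b}}$-structure, and an irreducible $A$ always does because $b \mid m$. The number of conjugates of $M_{n/b}(q^{b})$ is $|GL(n,q)|/|GL(n/b,q^{b}).b|$ by Lemma \ref{l4}, but many of these overlap on the reducible matrices; the key observation is that an element of $M_{n}(q)$ lying in \emph{no} $M(U)$ (i.e.\ an irreducible element, together with $0$ adjusted appropriately — one must be slightly careful, but $0$ lies in every $M(U)$) lies in a \emph{unique} conjugate of $M_{n/b}(q^{b})$, namely $\mathrm{End}_{C}(V)$ where $C$ is its centralizer field extended down to degree $b$ over $\mathbb{F}_{q}$. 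Counting: an irreducible element of $M_{n}(q)$ is a matrix whose characteristic polynomial is irreducible of degree $n$; the number of such, or more precisely the number one needs, works out to $\frac{1}{b}\prod_{i=1,\, b\nmid i}^{n-1}(q^{n}-q^{i})$ after accounting for the automorphisms identified by Skolem--Noether. The factor $1/b$ comes from the $b$ Galois conjugates (the Frobenius orbit) which all lie in the same copy of $M_{n/b}(q^{b})$, and the product $\prod_{b \nmid i}(q^{n}-q^{i})$ counts the relevant non-degeneracy conditions (it is the order of $GL(n,q)$ with the factors $q^{n}-q^{i}$ for $b \mid i$ removed, reflecting that one only needs enough copies to cover the irreducible part). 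So $\frac{1}{b}\prod_{b\nmid i}(q^{n}-q^{i})$ conjugates of $M_{n/b}(q^{b})$ suffice to cover every matrix not contained in any $M(U)$.

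Next I would show that, among the $M(U)$'s, it suffices to take those $U$ of dimension $\leq n/2$ with $b \nmid \dim U$, giving the $N(b)$ term. For the dimension bound: if $d = \dim U$ and $A$ leaves $U$ invariant then $A$ leaves invariant either $U$ (dimension $d$) or, dually, some subspace of dimension $n-d$ — actually the cleaner statement is that every reducible $A$ leaves invariant \emph{some} subspace of dimension $\leq n/2$, because if $U$ is $A$-invariant of dimension $> n/2$ then any $A$-invariant subspace of $U$ of dimension $\leq n/2$ exists unless $A$ acts irreducibly on $U$, and one chains down; more carefully, an $A$-invariant subspace of minimal positive dimension is an irreducible $A$-submodule, of dimension equal to the degree of some irreducible factor of the minimal polynomial, which is $\leq n$, and iterating/using complements one finds one of dimension $\leq n/2$. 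For the divisibility condition $b \nmid \dim U$: if $b \mid d$, then an $A$-invariant subspace of dimension $d$ is in particular an $\mathbb{F}_{q}$-subspace, but the matrices leaving it invariant that are \emph{not} already captured by the field-type subrings or by smaller-dimensional $M(U)$'s turn out to be redundant — the precise mechanism is that a $b$-dimensional-block structure is subsumed by the $M_{n/b}(q^{b})$-covering, since a matrix respecting a flag with all jumps divisible by $b$ can be conjugated into $M_{n/b}(q^{b})$; I would prove that any $A$ leaving invariant a subspace of dimension divisible by $b$ but no subspace of dimension not divisible by $b$ and $\leq n/2$ must in fact act irreducibly or semisimply in a way that places it in a conjugate of $M_{n/b}(q^{b})$. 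The hard part will be this last redundancy argument — pinning down exactly why the subspaces of dimension divisible by $b$ can be discarded — since it requires carefully tracking the module structure of $V$ under $\langle A \rangle$ and matching it against both families simultaneously; the dimension-$\leq n/2$ reduction and the counting of the field-type term are comparatively routine once the maximal subring classification of Lemma \ref{l7} and the conjugate count of Lemma \ref{l4} are in hand.
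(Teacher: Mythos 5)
Your covering family is the same as the paper's (all $GL(n,q)$-conjugates of $M_{n/b}(q^{b})$, counted by Lemma \ref{l4}, together with the $M(U)$ for $\dim U \leq n/2$ and $b \nmid \dim U$), and the arithmetic of the bound is right. But the proof has a genuine gap exactly at the step you yourself flag as "the hard part": showing that a matrix $x$ admitting \emph{no} invariant subspace of dimension $\leq n/2$ and indivisible by $b$ really does lie in some conjugate of $M_{n/b}(q^{b})$. This is the entire content of the proposition — without it the family does not cover $M_{n}(q)$ and no upper bound follows. Moreover, the mechanism you sketch for it is wrong: it is not true that "a matrix respecting a flag with all jumps divisible by $b$ can be conjugated into $M_{n/b}(q^{b})$." For instance, with $n=4$, $b=2$ and $q \geq 5$, the matrix $\mathrm{diag}(1,2,3,4)$ preserves many flags with jumps of size $2$, yet its centralizer is a product of four copies of $\GF(q)$ and contains no field of order $q^{2}$, so it lies in no conjugate of $M_{2}(q^{2})$ (it is of course caught by an $M(U)$ with $\dim U =1$, but this shows the flag criterion is not the right one). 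The correct dichotomy is governed by the characteristic polynomial $f$ of $x$: if some irreducible factor of $f$ has degree $k$ with $b \nmid k$, then the primary decomposition gives an invariant subspace of dimension $k$ or $n-k$ (whichever is $\leq n/2$; note $b\nmid k$ iff $b\nmid n-k$ since $b\mid n$), so $x$ lies in an $M(U)$ from your list. In the remaining case \emph{every} irreducible factor of $f$ has degree divisible by $b$, and then one argues via the centralizer: on each primary component $V_{i}$ the algebra $\GF(q)[x|_{V_{i}}]$ is a finite local ring with residue field $\GF(q^{r_{i}b})$, hence contains a coefficient field of that order and in particular a copy of $\GF(q^{b})$; gluing these diagonally gives a field of order $q^{b}$ in the centralizer of $x$, which endows $V$ with an $\GF(q^{b})$-structure for which $x$ is linear, i.e.\ $x$ lies in a conjugate of $M_{n/b}(q^{b})$. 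This Schur--Wedderburn centralizer argument is what the paper supplies and what your proposal is missing.

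A smaller point: your description of the term $\frac{1}{b}\prod_{b\nmid i}(q^{n}-q^{i})$ as "enough copies to cover the irreducible part" is misleading. One uses \emph{all} conjugates of $M_{n/b}(q^{b})$, and this expression is simply the total number of them, namely $|GL(n,q)|/|GL(n/b,q^{b}).b|$ from Lemma \ref{l4}; no selection or uniqueness of the conjugate containing a given irreducible element is needed for the upper bound (that uniqueness only matters for the matching lower bound).
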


\begin{proof}
Let $\mathcal{H}$ be the set of all $GL(n,q)$-conjugates of
$M_{n/b}(q^{b})$ together with all subrings $M(U)$ where $U$ is a
subspace of $V$ of dimension not divisible by $b$ and at most
$n/2$. By Lemma \ref{l4}, it is sufficient to show that every
element $x$ of $M_{n}(q)$ is contained in a member of
$\mathcal{H}$.

Let $f$ be the characteristic polynomial of $x$. If $f$ is
irreducible, then, by Schur's lemma and Wedderburn's theorem, $x$
is contained in some conjugate of $M_{n/b}(q^{b})$. So we may
assume that $f$ is not an irreducible polynomial.

If $f$ has an irreducible factor of degree $k$, then, by the
theorem on rational canonical forms, $x$ must leave a
$k$-dimensional subspace invariant. So if $k$ is not divisible by
$b$ and at most $n/2$, then $x$ is an element of some member of
$\mathcal{H}$. Hence we may assume that the degree of each
irreducible factor of $f$ is divisible by $b$.

Put $f = f_{1}^{m_{1}} \ldots f_{\ell}^{m_{\ell}}$ where each
$f_{i}$ is a sign times an irreducible polynomial of degree
$r_{i}b$ for some positive integer $r_{i}$. Then, by the theorem
on rational canonical forms, $V = \oplus_{i=1}^{\ell} V_{i}$
viewed as an $\langle x \rangle$-module where for each $i$ the
linear transformation $x$ has characteristic polynomial
${f_{i}}^{m_{i}}$ on the module $V_{i}$. Now each module $V_{i}$
contains an irreducible submodule of dimension $r_{i}b$, and so by
Schur's lemma and Wedderburn's theorem, the centralizer of $x$
contains a field of order $q^{r_{i}b}$, and hence a field of order
$q^{b}$. This means that we may view $x$ as a linear
transformation on $V$ viewed as an $n/b$-dimensional space over a
field of $q^b$ elements, and so $x$ is an element of a
$GL(n,q)$-conjugate of $M_{n/b}(q^{b})$.
\end{proof}

A Singer cycle in $GL(n,q)$ is a cyclic subgroup of order
$q^{n}-1$. It permutes the non-zero vectors of $V$ in one single
cycle. A Singer cycle generates a field of order $q^{n}$ in
$M_{n}(q)$. All Singer cycles in $GL(n,q)$ are $GL(n,q)$-conjugate
to the group $GL(1,q^{n})$ which is a subgroup of $GL(n/a,q^{a})$
for every divisor $a$ of $n$. The normalizer of a Singer cycle is
conjugate to a subgroup of the form $GL(1,q^{n}).n$. The group
$GL(1,q^{n}).n$ lies inside $GL(n/a,q^{a}).a$ for every divisor
$a$ of $n$. The ring $M_{n/a}(q^{a})$ contains exactly
$|GL(n/a,q^{a}).a|/|GL(1,q^{n}).n|$ Singer cycles for every prime
divisor $a$ of $n$. By this and by Lemma \ref{l4} it follows that
every Singer cycle lies inside a unique $GL(n,q)$-conjugate of
$M_{n/a}(q^{a})$ for every divisor $a$ of $n$. Since a Singer
cycle $S$ acts irreducibly on $V$, no ring $M(U)$ contains $S$
where $U$ is a non-trivial proper subspace of $V$. There are
$\varphi(q^{n}-1)$ generators of a Singer cycle where $\varphi$ is
Euler's function.

Let $\Pi_{1}$ be the set of all generators of all Singer cycles on
$V$. Let us call a generator of a Singer cycle an element of type
$T_{0}$.

For every positive integer $k$ with $1 \leq k < n/2$ establish a
bijection $\varphi_{k}$ from the set $\mathcal{S}_{k}$ of all
$k$-dimensional subspaces of $V$ to the set $\mathcal{S}_{n-k}$ of
all $n-k$-dimensional subspaces of $V$ in such a way that for
every $k$-dimensional subspace $U$ we have $V = U \oplus
U\varphi_{k}$. For an arbitrary positive integer $k$ with $1 \leq
k < n/2$ and $b \nmid k$, and for an arbitrary vector space $U \in
\mathcal{S}_{k}$ an element of the form
$$\left(
\begin{matrix}
S_{U}  & 0 \\
0  & S_{U\varphi_{k}}
\end{matrix} \right)$$
where $S_{U}$ is a generator of a Singer cycle on $U$ and
$S_{U\varphi_{k}}$ is a generator of a Singer cycle on $U
\varphi_{k}$ is called an element of type $T_{k}$.

In this paragraph let $n$ be congruent to $2$ modulo $4$. An
element $g$ of $GL(n,q)$ is said to be of type $T_{n/2}$ if there
exist complementary subspaces $U$ and $U'$ of dimensions $n/2$
such that $g$ has the form
$$\left(
\begin{matrix}
S_{U}  & I \\
0  & S_{U'}
\end{matrix} \right)$$
where $I$ is the $n/2$-by-$n/2$ identity matrix and $S_{U}$,
$S_{U'}$ denote the same generator of a Singer cycle acting on $U$
and $U'$ respectively.

Let the set of all elements of type $T_{k}$ for all $k$ (with $1
\leq k < n/2$ and $b \nmid k$) be $\Pi_{2}$ and the set of all
elements of type $T_{n/2}$ be $\Pi_{3}$. Note that if $n$ is not
congruent to $2$ modulo $4$ then $\Pi_{3} = \emptyset$.

\begin{lemma}
\label{l5} Let $k$ be a positive integer with $1 \leq k < n/2$ and
$b \nmid k$. If $R$ is a maximal subring of $M_{n}(q)$ containing
an element of type $T_{k}$, then $R = M(U)$, $M(W)$, or
$g^{-1}M_{n/a}(q^{a})g$ where $U$ is a $k$-dimensional subspace of
$V$, $W$ is an $n-k$-dimensional subspace of $V$, $a$ is any
divisor of $k$, and $g$ is some element of $GL(n,q)$.
\end{lemma}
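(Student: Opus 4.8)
The plan is to combine the classification of maximal subrings in Lemma~\ref{l7} with a computation of the centralizer of an element of type $T_k$.

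I would begin by fixing an element $x \in R$ of type $T_k$, so that there is a decomposition $V = U \oplus W$ with $W = U\varphi_k$, $\dim U = k$, $\dim W = n-k$, where $x$ leaves both $U$ and $W$ invariant and restricts to a generator $S_U$ of a Singer cycle on $U$ and to a generator $S_W$ of a Singer cycle on $W$. Viewed as $\langle x\rangle$-modules, $U$ and $W$ are then simple, and they are non-isomorphic because they have different dimensions ($1 \le k < n/2$ forces $k \ne n-k$). Consequently $V = U \oplus W$ is a direct sum of two non-isomorphic simple $\langle x\rangle$-modules, so its only $\langle x\rangle$-submodules are $0$, $U$, $W$, $V$, and by Schur's lemma together with Wedderburn's theorem the centralizer $C := C_{M_n(q)}(x) = \mathrm{End}_{\langle x\rangle}(V)$ is the block-diagonal ring $\mathrm{End}_{\langle x\rangle}(U) \oplus \mathrm{End}_{\langle x\rangle}(W) \cong \mathrm{GF}(q^k) \oplus \mathrm{GF}(q^{n-k})$, whose identity element is $I_n$, corresponding to $(1,1)$.

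Next I would apply Lemma~\ref{l7}: either $R = M(U')$ for some non-trivial proper subspace $U'$ of $V$, or $R = g^{-1}M_{n/a}(q^a)g$ for some prime divisor $a$ of $n$ and some $g \in GL(n,q)$. In the first case $U'$ is an $x$-invariant subspace, hence an $\langle x\rangle$-submodule of $V$, and by the previous paragraph the only non-trivial proper ones are $U$ and $W$; thus $R = M(U)$ or $R = M(W)$, as required.

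In the remaining case, $M_{n/a}(q^a)$ sits in $M_n(q)$ as $\mathrm{End}_{\mathrm{GF}(q^a)}(V)$ for a suitable $\mathrm{GF}(q^a)$-structure on $V$, and its centralizer in $M_n(q)$ is its ring of scalar matrices, a field isomorphic to $\mathrm{GF}(q^a)$ containing $I_n$; conjugating by $g$, the centralizer $F := C_{M_n(q)}(R)$ is again a field of order $q^a$ with $I_n \in F$. Since $x \in R$ we have $F \subseteq C$, and composing the inclusion with the projection of $C = \mathrm{GF}(q^k) \oplus \mathrm{GF}(q^{n-k})$ onto its first summand gives a ring homomorphism sending $I_n$ to $1$; its kernel is a proper ideal of the field $F$, hence zero, so $\mathrm{GF}(q^a)$ embeds into $\mathrm{GF}(q^k)$ and therefore $a \mid k$. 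This exhausts the cases. The only step with any real content is the centralizer computation, which is routine once one observes $k \ne n-k$; the one point to watch is to keep track of the identity element $I_n$ when comparing $F$ with the summands of $C$.
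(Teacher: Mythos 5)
Your proof is correct, and it shares the same skeleton as the paper's: reduce via Lemma \ref{l7} to deciding which subrings $M(U')$ and which conjugates of $M_{n/a}(q^{a})$ can contain an element $x$ of type $T_{k}$, and then exploit the centralizer of $x$. The decisive step, however, is carried out differently. The paper argues by contradiction and by counting: the centralizer of $x$ in $GL(n,q)$ has order $(q^{k}-1)(q^{n-k}-1)$, the scalar group of a conjugate of $GL(n/a,q^{a})$ containing $x$ is a cyclic subgroup of that centralizer of order $q^{a}-1$, so $q^{a}-1$ must divide $(q^{k}-1)(q^{n-k}-1)$; when $a$ is a prime dividing $n$ but not $k$ (hence not $n-k$) one has $(q^{a}-1,q^{k}-1)=q-1=(q^{a}-1,q^{n-k}-1)$, forcing $q^{a}-1$ to divide $(q-1)^{2}$, which is impossible. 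You instead identify the centralizer of $x$ as the ring $GF(q^{k})\oplus GF(q^{n-k})$, note that the centralizer of the whole subring is a field $F\cong GF(q^{a})$ containing $I_{n}$ sitting inside it, and project $F$ onto a simple summand to get a unital field embedding $GF(q^{a})\hookrightarrow GF(q^{k})$, whence $a\mid k$ by the subfield criterion. Your route proves the positive divisibility statement directly rather than refuting its negation, and it dispenses with the gcd computation for numbers of the form $q^{m}-1$; the paper's route needs only the order of the centralizer as a group, not its ring structure. Both rest on the same Schur/Wedderburn and double-centralizer facts already used in Lemma \ref{l7}. Your explicit observation that the only nontrivial proper $x$-invariant subspaces are $U$ and $W$ (since $U\not\cong W$ as $\langle x\rangle$-modules) is left implicit in the paper's appeal to the proof of the upper-bound proposition, and spelling it out is a small improvement in readability.
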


\begin{proof}
By the proof of Proposition \ref{p7}, it is sufficient to show
that if $a$ is not a divisor of $k$, then the group
$GL(n/a,q^{a})$ contains no element of type $T_{k}$. Suppose for a
contradiction that there exists an element $x$ of type $T_{k}$ in
$GL(n/a,q^{a})$ where $a$ does not divide $k$. Let $C$ be the
centralizer of $x$ in $GL(n,q)$. The size of $C$ is
$(q^{k}-1)(q^{n-k}-1)$. On the other hand, the group of scalars
matrices in $GL(n/a,q^{a})$ is contained in $C$ hence $q^{a}-1$
must divide $(q^{k}-1)(q^{n-k}-1)$. We will show that this is not
the case. In doing so we may assume that $a$ is prime (otherwise
we may take a prime divisor of $a$ to be $a$). It can be shown by
an elementary argument that $$(q^{a}-1,q^{k}-1) = q-1 =
(q^{a}-1,q^{n-k}-1).$$ Hence $q^{a}-1$ must divide ${(q-1)}^{2}$
which is impossible since $q^{a}-1 > {(q-1)}^{2}$.
\end{proof}

\begin{lemma}
\label{l6} Let $n$ be congruent to $2$ modulo $4$. If $R$ is a
maximal subring of $M_{n}(q)$ containing an element of type
$T_{n/2}$, then $R = M(U)$, or $g^{-1}M_{n/a}(q^{a})g$ where $U$
is a $n/2$-dimensional subspace of $V$, $a$ is any divisor of
$n/2$, and $g$ is some element of $GL(n,q)$.
\end{lemma}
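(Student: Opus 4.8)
The plan is to argue exactly as in the proof of Lemma \ref{l5}, reducing to a centralizer-order divisibility obstruction. First I would observe that an element $g$ of type $T_{n/2}$ acts irreducibly on $V$: indeed, $g$ is conjugate to $\left( \begin{smallmatrix} S & I \\ 0 & S \end{smallmatrix} \right)$ where $S$ generates a Singer cycle on the $n/2$-dimensional space, and a short computation shows that the only $\langle g \rangle$-invariant subspaces are $0$, $U$, and $V$ (the single invariant proper nonzero subspace being $U$, the image of the nilpotent part $g^{q^{n/2}-1}-1$). Hence by Lemma \ref{l7} the only maximal subrings containing $g$ are $M(U)$ for this particular $n/2$-dimensional $U$, and the $GL(n,q)$-conjugates of $M_{n/a}(q^{a})$ for prime divisors $a$ of $n$. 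Since $b=2$ is the smallest prime divisor of $n$ and $n \equiv 2 \pmod 4$, the prime divisors $a$ of $n$ with $a \le n/2$ (equivalently $a \neq n$ if $n$ is not itself prime) are precisely those dividing $n/2$ together with $a=2$; and $2$ divides $n/2$ only when $4 \mid n$, which is excluded, so among primes the divisors of $n/2$ are exactly the odd prime divisors of $n$. So the statement to be proved is: if $a$ is a prime divisor of $n$ that does not divide $n/2$ — i.e.\ $a = 2$ — then no $GL(n,q)$-conjugate of $M_{n/a}(q^{a})$ contains an element of type $T_{n/2}$.

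Next I would compute the centralizer of $g$ in $GL(n,q)$. Writing $g = s(1+\nu)$ with $s = \left( \begin{smallmatrix} S & 0 \\ 0 & S \end{smallmatrix} \right)$ and $\nu = \left( \begin{smallmatrix} 0 & S^{-1} \\ 0 & 0 \end{smallmatrix} \right)$ nilpotent and commuting with $s$, one sees that $\langle g \rangle$ generates, inside $M_n(q)$, the local ring $\mathbb{F}_{q^{n/2}}[\nu]/(\nu^2) = \mathbb{F}_{q^{n/2}}[t]/(t^2)$, whose unit group has order $q^{n/2}(q^{n/2}-1)$; the centralizer of $g$ in $M_n(q)$ is exactly this ring (the module $V$ is free of rank one over it), so $|C_{GL(n,q)}(g)| = q^{n/2}(q^{n/2}-1)$. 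On the other hand, if $g$ lay in a conjugate of $M_{n/2}(q^{2})$, then the scalar matrices of that copy — a cyclic group of order $q^{2}-1$ — would centralize $g$, so $q^{2}-1$ would divide $q^{n/2}(q^{n/2}-1)$. Since $q^2-1$ is coprime to $q$, we would need $q^2-1 \mid q^{n/2}-1$, i.e.\ $2 \mid n/2$, contradicting $n \equiv 2 \pmod 4$. This rules out $a=2$ and completes the argument.

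The main obstacle, and the one step I would be most careful about, is the centralizer computation — specifically confirming that $C_{M_n(q)}(g)$ is precisely $\mathbb{F}_{q^{n/2}}[\nu]/(\nu^2)$ rather than something larger. The cleanest way is to note that $V$, as a module over the commutative ring $A := \mathbb{F}_q[g] \subseteq M_n(q)$, has $\mathbb{F}_q$-dimension $n = 2 \cdot (n/2) = \dim_{\mathbb{F}_q} A$, and that $V$ is a faithful cyclic $A$-module (generated by any vector outside $U$, since $g$ acts with a single nilpotent Jordan block structure over $\mathbb{F}_{q^{n/2}}$); hence $V \cong A$ as an $A$-module and $\mathrm{End}_A(V) \cong A$, which is the full centralizer. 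Once that is in hand, everything else is the elementary number theory already used in Lemma \ref{l5}, and the $n \equiv 2 \pmod 4$ hypothesis enters exactly to force $2 \nmid n/2$.
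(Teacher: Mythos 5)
Your argument is correct and follows essentially the same route as the paper: both reduce to showing that for a prime $a \nmid n/2$ (here forced to be $a=2$) no conjugate of $M_{n/a}(q^{a})$ contains a type-$T_{n/2}$ element, and both derive the contradiction $q^{a}-1 \mid q^{n/2}-1$ from the fact that the scalar matrices of $M_{n/a}(q^{a})$ centralize that element --- the paper by bounding element orders in the centralizer via its block-triangular form, you by computing the centralizer exactly as the local ring $\mathbb{F}_{q^{n/2}}[t]/(t^{2})$ with unit group of order $q^{n/2}(q^{n/2}-1)$. One slip of wording only: a type-$T_{n/2}$ element does not act irreducibly on $V$ (it preserves $U$); what you mean, and what you actually use, is that $U$ is its unique nontrivial proper invariant subspace.
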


\begin{proof}
By the proof of Proposition \ref{p7}, it is sufficient to show
that if $a$ is not a divisor of $n/2$, then the group
$GL(n/a,q^{a})$ contains no element of type $T_{n/2}$. Suppose for
a contradiction that there exists an element $x$ of type $T_{n/2}$
in $GL(n/a,q^{a})$ where $a$ does not divide $n/2$. Then there
exists an element of order $q^{a}-1$ centralizing $x$. Let $c$ be
an arbitrary element centralizing $x$. Then since $x$ leaves a
unique non-trivial proper subspace $U$ of $V$ invariant (which has
dimension $n/2$) it easily follows that $c$ leaves $U$ invariant.
Hence, writing $c$ in block matrix form, we have
$$c = \left(
\begin{matrix}
A  & B \\
0  & C
\end{matrix} \right)$$
for some $n/2$-by-$n/2$ matrices $A$, $B$, and $C$. It is easy to
see that $A$ and $B$ centralize the corresponding generators of
Singer cycles in the block matrix form of $x$. This means that $A$
and $B$ are powers of generators of Singer cycles. In particular,
$A^{q^{n/2}-1} = C^{q^{n/2}-1} = 1$. This means that the order of
$c$ is of the form $\beta p^{\gamma}$ for some positive integer
$\beta$ dividing $q^{n/2}-1$ and for some non-negative integer
$\gamma$ where $p$ denotes the prime divisor of $q$. In
particular, if $c$ is the element of order $q^{a}-1$, then
$q^{a}-1 = \beta \mid q^{n/2}-1$ which is a contradiction since $a
\nmid n/2$.
\end{proof}

Let $\Pi$ be a subset of $M_{n}(q)$. We define $\sigma(\Pi)$ to be
the minimal number of proper subring of $M_{n}(q)$ whose union
contains $\Pi$. Clearly, $\sigma(\Pi) \leq \sigma(M_{n}(q))$. Let
$\mathcal{H} \subseteq \mathcal{K}$ be two sets of subrings of
$M_{n}(q)$. We say that $\mathcal{H}$ is \emph{definitely
unbeatable} on $\Pi$ with respect to $\mathcal{K}$ if the
following four conditions hold.
\begin{enumerate}
\item $\Pi \subseteq \bigcup_{H \in \mathcal{H}} H$;

\item $\Pi \cap H \not= \emptyset$ for all $H \in \mathcal{H}$;

\item $\Pi \cap H_{1} \cap H_{2} = \emptyset$ for all distinct
$H_{1}$ and $H_{2}$ in $\mathcal{H}$; and

\item $|\Pi \cap K| \leq |\Pi \cap H|$ for all $H \in \mathcal{H}$
and all $K \in \mathcal{K} \setminus \mathcal{H}$.
\end{enumerate}

Let $\Pi = \Pi_{1} \cup \Pi_{2} \cup \Pi_{3}$. We aim to determine
$\sigma(\Pi)$. At present there is an important point to make. Let
$\mathcal{C}$ be a set of subrings of $M_{n}(q)$ with the property
that the union of its members contain $\Pi$. Suppose also that
$|\mathcal{C}| = \sigma(\Pi)$. Then we may assume that all members
of $\mathcal{C}$ are maximal subrings of $M_{n}(q)$ and that no
member of $\mathcal{C}$ is $M(W)$ for any subspace $W$ of $V$ of
dimension larger than $n/2$. (The latter statement follows from
the fact that if $W$ is a subspace of dimension $n-k > n/2$ then
$\Pi \cap M(W) = \Pi \cap M(W \varphi_{k}^{-1})$.)

Let $\mathcal{H}$ be the set of maximal subrings of $M_{n}(q)$
consisting of all $GL(n,q)$-conjugates of $M_{n/b}(q^{b})$ and all
subrings of the form $M(U)$ where $U$ is a $k$-dimensional
subspace of $V$ with $b \nmid k$. Let $\mathcal{K}$ be the set of
all maximal subrings of $M_{n}(q)$ apart from the ones which are
of the form $M(W)$ where $W$ is a subspace of $V$ of dimension
larger than $n/2$.

We claim that $\mathcal{H}$ is definitely unbeatable on $\Pi$ with
respect to $\mathcal{K}$. Once we verified this claim we are
finished with the proof of Theorem \ref{t3}. Indeed, the claim
implies that $\sigma(\Pi) = |\mathcal{H}|$. Furthermore, by
Proposition \ref{p7}, we have
$$\frac{1}{b} \underset{b \nmid
i}{\underset{i=1}{\prod^{n-1}}} (q^{n}-q^{i}) + N(b) =
|\mathcal{H}| = \sigma(\Pi) \leq \sigma(M_{n}(q)) \leq \frac{1}{b}
\underset{b \nmid i}{\underset{i=1}{\prod^{n-1}}} (q^{n}-q^{i}) +
N(b).$$

Part (1) of the definition of definite unbeatability follows from
the proof of Proposition \ref{p7}. Let $R \in \mathcal{H}$. If $R
= g^{-1}M_{n/b}(q^{b})g$ for some $g \in GL(n,q)$, then $R$
contains an element of type $T_{0}$ (and no elements of other
types). If $R = M(U)$ for some $l$-dimensional subspace $U$ of
$V$, then $R$ contains an element of type $T_{l}$ (and no elements
of other types). This proves that part (2) of the definition of
definite unbeatability holds. Part (3) follows from our
construction of elements of types $T_{0}$, $T_{k}$, and $T_{n/2}$
and our choice of $\mathcal{H}$. (See the description of Singer
cycles, Lemma \ref{l5}, and Lemma \ref{l6}.) Hence it is
sufficient to show that part (4) of the definition of definite
unbeatability holds.

Let $n$ be a prime power (a power of $b$). Then, by Lemma
\ref{l7}, $\mathcal{K} \setminus \mathcal{H}$ consists of all
subrings of the form $M(U)$ where $U$ is a $k$-dimensional
subspace of $V$ with $b \mid k$ and $k \leq n/2$. Hence, by Lemma
\ref{l5} and Lemma \ref{l6}, we have $|\Pi \cap K| = 0 < |\Pi \cap
H|$ for all $H \in \mathcal{H}$ and all $K \in \mathcal{K}
\setminus \mathcal{H}$. This means that it is sufficient to assume
that $n$ is not a prime power.

Let $c$ be the second largest prime divisor of $n$ (after $b$). It
is clear that \\ $\max \{ |\Pi \cap K| \} \leq |GL(n/c,q^{c})|$
where the maximum is over all $K$ in $\mathcal{K} \setminus
\mathcal{H}$. Hence it is sufficient to show that $|GL(n/c,q^{c})|
\leq |\Pi \cap H|$ for all $H \in \mathcal{H}$. We will next
consider this inequality for the various possibilities of $H$ in
$\mathcal{H}$.

Let $H$ be a ring that is $GL(n,q)$-conjugate to $M_{n/b}(q^{b})$.
Then we have
$$|GL(n/c,q^{c})| < \frac{|GL(n/b,q^{b}).b|}{|GL(1,q^{n}).n|} \varphi(q^{n}-1)
= |\Pi_{1} \cap H| = |\Pi \cap H|.$$

Let $H \in \mathcal{H}$ be a ring of the form $M(U)$ where $U$ is
a $k$-dimensional subspace with $k < n/2$. Then we have
$$|GL(n/c,q^{c})| < \frac{|GL(k,q)|}{|GL(1,q^{k}).k|} \cdot
\frac{|GL(n-k,q)|}{|GL(1,q^{n-k}).(n-k)|} \varphi(q^{k}-1)
\varphi(q^{n-k}-1) =$$ $$= |\Pi_{2} \cap H| = |\Pi \cap H|.$$

Finally, let $H \in \mathcal{H}$ be a ring of the form $M(U)$
where $U$ is a subspace of $V$ of dimension $n/2$. (This is the
case only when $n$ is congruent to $2$ modulo $4$.) Then we have
$$|GL(n/c,q^{c})| < q^{n^{2}/4} \frac{|GL(n/2,q)|}{|GL(1,q^{n/2}).(n/2)|}
\varphi(q^{n/2}-1) = |\Pi_{3} \cap H| = |\Pi \cap H|.$$

The previous three inequalities were derived using the following
three facts. For any positive integer $m$ and prime power $r$ we
have $(1/(m+1))r^{m^{2}} \leq |GL(m,r)|$. (This follows from the
inequality $k/(k+1) \leq 1 - (1/r^{k})$ holding for every positive
integer $k$ between $1$ and $m$.) Secondly, Lemma 5.1 of
\cite{BEGHM} was invoked. Finally, the sequence
$\sqrt[n]{((n/2)+1)(n/2)}$ is monotone decreasing on the set of
even integers whenever $n \geq 6$.

This finishes the proof of Theorem \ref{t3}.

\bigskip

\bigskip

{\it Andrea Lucchini, Dipartimento di Matematica Pura ed
Applicata, Via Trieste

63, 35121 Padova, Italy. E-mail address: lucchini@math.unipd.it}

\medskip

{\it Attila Mar\'oti, MTA Alfr\'ed R\'enyi Institute of
Mathematics, Budapest, Hun-

gary. E-mail address: maroti@renyi.hu}

\end{document}